\newtheorem{theorem}{Theorem}
\newtheorem{lemma}[theorem]{Lemma}
\newtheorem{example}[theorem]{Example}
\newtheorem{remark}[theorem]{Remark}
\begin{document}

\date{27 June 2012}
\author{Ruimin Xu$^{1,2}\thanks{
Corresponding author.},$ Liangquan
Zhang$^{1,3}$ \\
1. School of Mathematics, Shandong University\\
Jinan 250100, People's Republic of China.\\
2. School of Mathematics, Shandong Polytechnic University, \\
Jinan, 250353, People's Republic of China.\\
E-mail: ruiminx@126.com.\\
{3. }Laboratoire de Math\'eatiques, \\
Universit\'e de Bretagne Occidentale, 29285 Brest C\'edex, France.\\
E-mail:xiaoquan51011@163.com }

\title{Stochastic Maximum Principle for Mean-field Controls and Non-Zero Sum
Mean-field Game Problems for Forward-Backward Systems}

\maketitle

\begin{abstract}
The objective of the present paper is to investigate the solution of fully
coupled mean-field forward-backward stochastic differential equations
(FBSDEs in short) and to study the stochastic control problems of mean-field
type as well as the mean-field stochastic game problems both in which state
processes are described as FBSDEs. By combining classical FBSDEs methods
introduced by Hu and Peng [Y. Hu, S. Peng, Solution of forward-backward
stochastic differential equations, Probab. Theory Relat. Fields 103 (1995)]
with specific arguments for fully coupled mean-field FBSDEs, we prove the
existence and uniqueness of the solution to this kind of fully coupled
mean-field FBSDEs under a certain \textquotedblleft monotonicity" condition.
Next, we are interested in optimal control problems for (fully coupled
respectively) FBSDEs of mean-field type with a convex control domain. Note
that the control problems are time inconsistent in the sense that the
Bellman optimality principle does not hold. The stochastic maximum principle
(SMP) in integral form for mean-field controls, which is different from the
classical one, is derived, specifying the necessary conditions for
optimality. Sufficient conditions for the optimality of a control is also
obtained under additional assumptions. Then we are concerned the maximum
principle for a new class of non-zero sum stochastic differential games.
This game system differs from the existing literature in the sense that the
game systems here are characterized by (fully coupled respectively) FBSDEs
in the mean-field framework. Our paper deduces necessary conditions as well
as sufficient conditions in the form of maximum principle for open
equilibrium point of this class of games respectively.
\end{abstract}

\textbf{Key words:} Mean-field; forward-backward stochastic differential
equation (FBSDEs); forward-backward stochastic control systems; stochastic
maximum principle; non-zero sum stochastic differential game.

\section{Introduction}

In this paper, we consider the fully coupled forward-backward stochastic
differential equations (FBSDEs) of mean-field type
\begin{eqnarray}
X_{t} &=&x+\int_{0}^{t}\mathbb{E}^{\prime }[b(s,X_{s}^{\prime
},Y_{s}^{\prime },Z_{s}^{\prime },X_{s},Y_{s},Z_{s})]ds+\int_{0}^{t}\mathbb{E%
}^{\prime }[\sigma (s,X_{s}^{\prime },Y_{s}^{\prime },Z_{s}^{\prime
},X_{s},Y_{s},Z_{s})]dW_{s},  \notag \\
Y_{t} &=&\Phi (X_{T})+\int_{t}^{T}\mathbb{E}^{\prime }[f(s,X_{s}^{\prime
},Y_{s}^{\prime },Z_{s}^{\prime
},X_{s},Y_{s},Z_{s})]ds-\int_{t}^{T}Z_{s}dW_{s},  \label{fully coupled FBSDE}
\end{eqnarray}%
where $b,f:\Omega \times \Omega \times \lbrack 0,T]\times \mathbb{R}\times
\mathbb{R}\times \mathbb{R}^{d}\times \mathbb{R}\times \mathbb{R}\times
\mathbb{R}^{d}\longrightarrow \mathbb{R}$ and $\sigma :\Omega \times \Omega
\times \lbrack 0,T]\times \mathbb{R}\times \mathbb{R}\times \mathbb{R}%
^{d}\times \mathbb{R}\times \mathbb{R}\times \mathbb{R}^{d}\longrightarrow
\mathbb{R}^{d}$ satisfy the \textquotedblleft monotone\textquotedblright\
condition introduced firstly by Hu and Peng \cite{Hu Peng} and $W$ is a $d-$%
dimensional Brownian motion. Here, the coefficients $\mathbb{E}^{\prime
}[\phi (s,X_{s}^{\prime },Y_{s}^{\prime },Z_{s}^{\prime
},X_{s},Y_{s},Z_{s})]\ (\phi =b,\sigma ,f),$ which are different from the
classical coefficients of fully coupled FBSDEs, can be interpreted as
\begin{equation*}
\mathbb{E}^{\prime }[\phi (s,X_{s}^{\prime },Y_{s}^{\prime },Z_{s}^{\prime
},X_{s},Y_{s},Z_{s})]=\int_{\Omega }\phi (\omega ^{\prime },\omega
,s,X_{s}(\omega ^{\prime }),Y_{s}(\omega ^{\prime }),Z_{s}(\omega ^{\prime
}),X_{s}(\omega ),Y_{s}(\omega ),Z_{s}(\omega ))P(d\omega ^{\prime }).
\end{equation*}

(Fully coupled) FBSDEs are encountered in the probabilistic interpretation
(Feynman-Kac formula) of a large kind of second order quasi-linear PDEs,
mathematical economics, mathematical finance and especially in the
stochastic control problems (cf. \cite{Peng 1991}-\cite{Shi Wu 2010}). There
have been many results on the solvability of fully-coupled FBSDEs. Antonelli
\cite{Antonelli} first studied these equations, and he proved the existence
and uniqueness with the help of the fixed point theorem when the time
duration $T$ is sufficiently small. Among others, to our knowledge, there
exist three main methods to investigate the solvability of an FBSDEs on an
arbitrarily prescribed time duration. The first one concerns a kind of
\textquotedblleft four step scheme\textquotedblright\ by Ma et al. \cite%
{Four step} which can be regarded as a sort of combination of methods of
PDEs and probability. The second one is the purely probabilistic method by
Hu and Peng \cite{Hu Peng}, Peng and Wu \cite{Peng Wu}, Yong \cite{Yong} and
Pardoux and Tang \cite{Pardoux Tang}. They required the \textquotedblleft
monotonicity\textquotedblright\ condition on the coefficients. The third one
is motivated by the study of numerical methods for some linear FBSDEs (see
Delarue \cite{Delarue} and Zhang \cite{Zhang}). Delarue \cite{Delarue}
relied on PDEs arguments, so its coefficients have to be deterministic while
Zhang \cite{Zhang} imposed some assumptions on the derivatives of the
coefficients instead of the monotonicity condition.

Buckdahn, Djehiche, Li, and Peng \cite{MFBSDE1} and Buckdahn and Li et al.
\cite{MFBSDE2} investigated a new kind of BSDEs-Mean-field BSDEs, inspired
by Lasry and Lions \cite{Lions}. In the present work, we adapt the methods
developed by Hu and Peng \cite{Hu Peng} in order to establish the existence
and uniqueness result for the fully coupled mean-field FBSDEs under the
``monotone" condition. The two technical lemmas, aiming to prove the
existence result of fully coupled mean-field FBSDEs, differ from the
classical lemma in \cite{Hu Peng} because of the mean-field type. When the
coefficients $b, \sigma$ and $f$ do not depend on $\omega^{\prime }$, the
fully coupled equation (\ref{fully coupled FBSDE}) reduces to the standard
one. So our result is nontrivially more general of \cite{Hu Peng}.

We also consider stochastic optimal control problems and stochastic
differential games (SDGs) in which the state variables are described by a
system of mean-field FBSDEs. Mean-field control problems were recently
studied by many researchers, such as Andersson, Djehiche\cite{Djehiche},
Buckdahn, Djehiche and Li \cite{Juan Li General SMP}, Meyer-Brandis, $%
\emptyset$sendal, and Zhou \cite{Zhou} and Li \cite{Juan Li Automatica}.
Andersson, Djehiche\cite{Djehiche} use the methods in Bensoussan \cite%
{Bensoussan} to obtain the necessary conditions of the optimality of a
control, i.e. they suppose that the control state space is convex so as to
make a convex perturbation of the optimal control and obtain a maximum
principle of local condition. Buckdahn, Djehiche and Li \cite{Juan Li
General SMP} get a Peng's type maximum principle for a general action space
where the action space is not convex, using a spike variation of the optimal
control. In Meyer-Brandis, $\emptyset$sendal and Zhou \cite{Zhou}, a
stochastic maximum principle of mean-field type in a similar setting is
studied, but by using Malliavin calculus. Li \cite{Juan Li Automatica}, also
using the convex perturbation technology with the convex assumption for
control domain, has a different controlled system and state equation of
mean-field type from \cite{Djehiche}.

However, the results above are all on the forward control system. As far as
we know, Peng \cite{Peng 1993} originally studied one kind of
forward-backward stochastic control system which has the economic background
and could be used to study the recursive optimal control problem in the
mathematical finance. He obtained the maximum principle for this kind of
control system with the control domain being convex. Later, Shi and Wu \cite%
{Shi Wu 2006} applied the spike variational technique to derive the maximum
principle for fully coupled forward-backward stochastic control system in
the global form and indicated that the control domain is not necessarily
convex but the control variable can't enter into the diffusion term. In
order to study the forward-backward stochastic control problem under the
mean-field framework, we apply the convex perturbation methods introduced in
Bensoussan \cite{Bensoussan} and analytical technique provided by \cite{Juan
Li Automatica} to establish a necessary condition for optimality of the
control in the form of the maximum principle for the (fully coupled
respectively) mean-field forward-backward stochastic control system in which
the state equation is mean-field FBSDE (fully coupled mean-field FBSDE
respectively). The adjoint equation, playing an important role in deriving
the SMP, is a (fully coupled respectively) mean-field backward SDE and has a
unique adapted solution under the given assumptions with the help of the
conclusion in \cite{MFBSDE2} (or the conclusion in Theorem 3.1
respectively). Also, we obtain the corresponding sufficient condition, which
can check whether the candidate optimal control is optimal or not. Our
results can be reviewed as an extension of Peng \cite{Peng 1993} and Li \cite%
{Juan Li Automatica}.

Inspired by Wang and Yu \cite{Wang Yu}, which gave the maximum principle for
non-zero sum differential games of BSDE system, we study the non-zero sum
stochastic differential games (SDGs in short) of mean-field type.
Differential games, originally studied by Isaacs \cite{Isaacs 1965}, are
ones in which the position, being controlled by players, evolves
continuously. Fleming and Souganidis \cite{Fleming Souganidis 1989} were the
first to study in a rigorous manner two-player zero sum SDGs. Their work has
translated former results on differential games by Isaacs \cite{Isaacs 1965}%
, Friedman \cite{Friedman 1971}, and, in particular, Evans and Souganidis
\cite{Evans Souganidis 1984} from the purely deterministic into the
stochastic framework and has given an important impulse for the research in
the theory of stochastic differential games. Next, the advances in SDGs
appear over a large number of fields (cf. \cite{Hamadene 1999}-\cite{Altman
2005}).

We notice that the game literature is mainly restricted to forward
(stochastic) systems, i.e., these game systems are described by forward
(stochastic) differential equations. Recently, Wang and Yu \cite{Wang Yu}
concerned the theory of backward stochastic differential games and obtained
the maximum principle as well as the verification theorem for non-zero sum
SDGs of BSDEs in which game systems are described by BSDEs. It is remarkable
that this topic about the forward-backward system is quite lacking in
literature. To fill the gap, we investigate the theory of forward-backward
SDG problems under the mean-field framework. Similar to our stochastic
control problems, we study the SDGs with the state equation having two
different forms: mean-field FBSDEs and fully coupled mean-field FBSDE. By
virtue of an argument of the convex perturbation, we deduce the stochastic
maximum principle for the equilibrium point of Problem (FBNZ) (Problem
(CFBNZ) respectively), which gives the candidate equilibrium points. By
extending classical approaches to the mean-field framework, we prove, under
some restrictive assumptions (but comparable with those in the classical
case), the sufficiency of the necessary conditions. It is necessary to point
that our SDGs conclusion not only extends the result of Wang and Yu \cite%
{Wang Yu} but also includes the situation where the state equation of the
stochastic game system is classical (i.e. in no mean-field form) FBSDE
(fully coupled FBSDE respectively).

Our paper is organized as follows. Section 2 recalls some elements of the
theory of FBSDEs and mean-field BSDEs which are needed in what follows.
Section 3 investigates the uniqueness and existence of the solution of fully
coupled mean-field FBSDEs under the \textquotedblleft
monotonicity\textquotedblright\ condition in which two technical lemmas are
used to prove the existence result. In Section 4, we study the
forward-backward stochastic control system of mean-field type. Specifically,
the maximum principle, specifying the necessary condition for optimality, is
deduced and we get, under additional assumptions, the corresponding
sufficient condition which can check whether the candidate control is
optimal or not. Similar results about fully coupled forward-backward
stochastic control system of mean-field type are obtained in Section 5.
Following the idea introduced in Section 4 and Section 5, we analyze the
non-zero sum stochastic differential games of FBSDEs and fully coupled
FBSDEs in Section 6 and Section 7, respectively, and derive the necessary
condition in the form of the maximum principle as well as the sufficient
condition--verification theorem for the equilibrium point.

\section{Preliminaries}

Let $(\Omega,\mathcal{F},\mathcal{F}_t,P)$ be a given complete filtered
probability space on which a $d$-dimensional standard Brownian motion $%
W=(W_t)_{t\geq 0}$ is defined. By $\mathbb{F}=\{\mathcal{F}_t,{0 \leq t\leq T%
}\}$ we denote the natural filtration of $W$ augmented by $P-$null sets of $%
\mathcal{F}$, i.e.,
\begin{equation*}
\mathcal{F}_t= \sigma\{W_s, s\leq t\}\vee \mathcal{N}_P, \ \ t\in [0,T],
\end{equation*}
where $\mathcal{N}_P$ is the set of all $P$-null sets and $T>0$ is a fixed
time horizon.

We shall introduce the following two processes which can be used frequently
in what follows:
\begin{eqnarray*}
\mathcal{S}_{\mathbb{F}}^{2}(0,T;\mathbb{R}) &:&=\bigg\{(\phi _{t})_{0\leq
t\leq T}\ \text{real-valued}\ \mathbb{F}-\text{adapted c$\grave{a}$dl$\grave{%
a}$g process}:\mathbb{E}\Big[\sup\limits_{0\leq t\leq T}|\phi _{t}|^{2}\Big]%
<+\infty \bigg\}; \\
\mathcal{M}_{\mathbb{F}}^{2}(0,T;\mathbb{R}^{n}) &:&=\bigg\{(\phi
_{t})_{0\leq t\leq T}\ \mathbb{R}^{n}\text{-valued }\mathbb{F}-\text{adapted
process}:\mathbb{E}\Big[\int_{0}^{T}|\phi _{t}|^{2}dt\Big]<+\infty \bigg\}.
\end{eqnarray*}

\subsection{The classical FBSDEs}

We first recall some results on FBSDEs, for its proof the reader is referred
to Hu and Peng \cite{Hu Peng}. The FBSDEs they considered has the form
\begin{equation*}
\begin{array}{ll}
x_{t}=x_{0}+\int_{0}^{t}b(s,x_{s},y_{s},z_{s})ds+\int_{0}^{t}\sigma
(s,x_{s},y_{s},z_{s})dW_{s}, &  \\
&  \\
y_{t}=g(x_{T})+\int_{t}^{T}f(s,x_{s},y_{s},z_{s})ds-\int_{t}^{T}z_{s}dW_{s},%
\ \ \ t\in \lbrack 0,T].\label{Hu Peng FBSDE} &
\end{array}%
\end{equation*}%
Function $b,f:\Omega \times \lbrack 0,T]\times \mathbb{R}\times \mathbb{R}%
\times \mathbb{R}^{d}\rightarrow \mathbb{R}$, $\sigma :\Omega \times \lbrack
0,T]\times \mathbb{R}\times \mathbb{R}\times \mathbb{R}^{d}\rightarrow
\mathbb{R}^{d}$ with the property that $b(t,x,y,z)_{t\in \lbrack
0,T]},\sigma (t,x,y,z)_{t\in \lbrack 0,T]}$ and $f(t,x,y,z)_{t\in \lbrack
0,T]}$ are $\mathbb{F}$-progressively measurable for each $(x,y,z)\in
\mathbb{R}\times \mathbb{R}\times \mathbb{R}^{d}$.

Some notations and conditions are needed before giving the existence and
uniqueness of the solution of such FBSDEs. Let $<,>$ denote the usual inner
product in $\mathbb{R}^{n}$, and for $u=(x,y,z)\in \mathbb{R}\times \mathbb{R%
}\times \mathbb{R}^{d}$, we define
\begin{equation*}
F(t,u):=(-f(t,u),b(t,u),\sigma (t,u)).
\end{equation*}

\begin{enumerate}
\item[\textbf{(H1)}] (i) For each $u=(x,y,z)\in \mathbb{R}\times \mathbb{R}%
\times \mathbb{R}^{d}$, $F(\cdot ,u)\in \mathcal{M}^{2}(0,T;\mathbb{R}\times
\mathbb{R}\times \mathbb{R}^{d})$, and for each $x\in \mathbb{R},g(x)\in
L^{2}(\Omega ,\mathcal{F}_{T};\mathbb{R});$ there exists a constant $c_{1}>0$%
, such that
\begin{eqnarray*}
|F(t,u_{1})-F(t,u_{2})| &\leq &c_{1}|u_{1}-u_{2}|,\ P-a.s.,a.e.\ t\in
\mathbb{R}^{+}, \\
\ \ \ \forall u_{i} &\in &\mathbb{R}\times \mathbb{R}\times \mathbb{R}^{d}\
\ (i=1,2)\ ; \\
|g(x_{1})-g(x_{2})| &\leq &c_{1}|x_{1}-x_{2}|,\ P-a.s.,\ \ \ \forall
(x_{1},x_{2})\in \mathbb{R}\times \mathbb{R}.
\end{eqnarray*}%
(ii) There exists a constant $c_{2}>0$, such that
\begin{eqnarray*}
&<&F(t,u_{1})-F(t,u_{2}),u_{1}-u_{2}>\ \leq \ -c_{2}|u_{1}-u_{2}|^{2}, \\
&&\ \ \ \ \ \ \ \ \ \ \ \ P-a.s.,\ a.e.\ t\in \mathbb{R}^{+},\ \ \ \forall \
u_{i}\in \mathbb{R}\times \mathbb{R}\times \mathbb{R}^{d}\ (i=1,2)\ ; \\
&<&g(x_{1})-g(x_{2}),x_{1}-x_{2}>\ \geq \ c_{2}|x_{1}-x_{2}|^{2},\ P-a.s.,\
\ \ \forall \ (x_{1},x_{2})\in \mathbb{R}\times \mathbb{R}.
\end{eqnarray*}
\end{enumerate}

\begin{lemma}
Let assumptions (H1) hold, then there exists a unique adapted solution $%
(x,y,z)$ for the FBSDEs (1)
\end{lemma}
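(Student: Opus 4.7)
My plan is to follow the purely probabilistic approach of Hu and Peng: derive uniqueness directly from an It\^o computation that exploits the monotonicity, and prove existence by a method of continuation in a parameter $\alpha\in[0,1]$ that interpolates between a trivially solvable decoupled system at $\alpha=0$ and the given FBSDE at $\alpha=1$.

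For uniqueness, I would take two adapted solutions $(x^i,y^i,z^i)$, $i=1,2$, set $\hat x=x^1-x^2$, $\hat y=y^1-y^2$, $\hat z=z^1-z^2$, $\hat u=(\hat x,\hat y,\hat z)$, and apply It\^o's formula to $\langle \hat x_t,\hat y_t\rangle$ on $[0,T]$. Using $\hat x_0=0$ and $\hat y_T=g(x_T^1)-g(x_T^2)$ yields
\[
\mathbb{E}\langle g(x_T^1)-g(x_T^2),\hat x_T\rangle \;=\; \mathbb{E}\int_0^T\langle F(s,u_s^1)-F(s,u_s^2),\hat u_s\rangle\,ds.
\]
By (H1)(ii) the left-hand side is bounded below by $c_2\mathbb{E}|\hat x_T|^2$ while the right-hand side is bounded above by $-c_2\mathbb{E}\int_0^T|\hat u_s|^2ds$; combining forces $\hat u\equiv 0$.

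For existence, fix arbitrary perturbations $(\phi,\psi,\gamma,\xi)\in\mathcal{M}_{\mathbb{F}}^{2}\times\mathcal{M}_{\mathbb{F}}^{2}\times\mathcal{M}_{\mathbb{F}}^{2}\times L^2(\Omega,\mathcal{F}_T)$ and, for $\alpha\in[0,1]$, consider
\begin{align*}
x_t^\alpha &= x_0+\int_0^t[\alpha b(s,u_s^\alpha)+\phi_s]ds+\int_0^t[\alpha\sigma(s,u_s^\alpha)+\psi_s]dW_s,\\
y_t^\alpha &= \alpha g(x_T^\alpha)+\xi+\int_t^T[\alpha f(s,u_s^\alpha)+\gamma_s]ds-\int_t^T z_s^\alpha dW_s.
\end{align*}
At $\alpha=0$ the forward SDE decouples and is trivially solvable, after which the backward part is a standard Lipschitz BSDE, so the perturbed system is uniquely solvable at $\alpha=0$. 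The continuation step asserts the existence of $\delta_0>0$ depending only on $c_1,c_2,T$, such that whenever the perturbed system is uniquely solvable at some $\alpha_0\in[0,1]$, it is uniquely solvable for all $\alpha\in[\alpha_0,\alpha_0+\delta_0]\cap[0,1]$. I would prove this by a fixed-point argument: given trial data $(U,\chi)\in\mathcal{M}_{\mathbb{F}}^{2}\times L^2(\mathcal{F}_T)$, let $\Gamma_\alpha(U,\chi):=(u^{\alpha_0},x_T^{\alpha_0})$ denote the solution of the $\alpha_0$-system driven by the augmented perturbations $\phi+\delta b(\cdot,U)$, $\psi+\delta\sigma(\cdot,U)$, $\gamma+\delta f(\cdot,U)$, $\xi+\delta g(\chi)$, where $\delta=\alpha-\alpha_0$. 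A solution at level $\alpha$ is exactly a fixed point of $\Gamma_\alpha$; iterating in steps of $\delta_0$ from $\alpha=0$ reaches $\alpha=1$ in finitely many steps and, specializing $(\phi,\psi,\gamma,\xi)=0$, produces the desired solution to the original FBSDE.

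The main obstacle is the a priori estimate showing $\Gamma_\alpha$ is a contraction with rate $C\delta$ for a constant $C$ depending only on $c_1,c_2,T$. Differencing two images $\Gamma_\alpha(U,\chi)$ and $\Gamma_\alpha(U',\chi')$ and applying It\^o's formula to $\langle\hat x^{\alpha_0},\hat y^{\alpha_0}\rangle$, the monotonicity of (H1)(ii) furnishes a favorable boundary contribution $c_2\mathbb{E}|\hat x_T^{\alpha_0}|^2$ on the left and a favorable running term $-c_2\mathbb{E}\int_0^T|\hat u^{\alpha_0}_s|^2ds$ on the right, while the inhomogeneous $\delta$-driven cross-terms are controlled by the Lipschitz bound (H1)(i) together with Young's inequality, producing the prefactor $\delta$ in the contraction rate. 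The critical point is that no constant in this estimate can depend on $\alpha_0$; the structural sign supplied by the monotonicity condition is precisely what guarantees this uniformity, which in turn permits the finite induction in $\alpha$ that finishes the proof.
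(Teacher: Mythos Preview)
Your overall architecture---uniqueness via It\^o's formula applied to $\langle \hat x_t,\hat y_t\rangle$ together with (H1)(ii), existence via a method of continuation in $\alpha$---is exactly the Hu--Peng scheme to which the paper refers (and which the paper reproduces in the mean-field setting through Lemmas~8 and~9). The uniqueness part is fine.

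The gap is in your choice of interpolation. You set $b^\alpha=\alpha b$, $\sigma^\alpha=\alpha\sigma$, $f^\alpha=\alpha f$, $g^\alpha=\alpha g$. With this scaling the monotonicity constant at level $\alpha_0$ is $\alpha_0 c_2$, not $c_2$: one gets
\[
\langle F^{\alpha_0}(u_1)-F^{\alpha_0}(u_2),u_1-u_2\rangle=\alpha_0\langle F(u_1)-F(u_2),u_1-u_2\rangle\le -\alpha_0 c_2|u_1-u_2|^2,
\]
and similarly $\langle g^{\alpha_0}(x_1)-g^{\alpha_0}(x_2),x_1-x_2\rangle\ge \alpha_0 c_2|x_1-x_2|^2$. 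Hence the ``favorable boundary contribution $c_2\mathbb{E}|\hat x_T^{\alpha_0}|^2$'' and ``favorable running term $-c_2\mathbb{E}\int_0^T|\hat u_s^{\alpha_0}|^2 ds$'' you invoke are actually $\alpha_0 c_2\mathbb{E}|\hat x_T^{\alpha_0}|^2$ and $-\alpha_0 c_2\mathbb{E}\int_0^T|\hat u_s^{\alpha_0}|^2 ds$. Your central claim that ``no constant in this estimate can depend on $\alpha_0$'' therefore fails for this interpolation, and with it the uniform step size $\delta_0$.

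The fix used by Hu--Peng (and mirrored in the paper's Lemmas~8--9) is to interpolate not towards the zero system but towards a reference linear FBSDE that itself satisfies the monotonicity with a fixed constant: set
\[
b^\alpha=\alpha b-(1-\alpha)y,\quad \sigma^\alpha=\alpha\sigma-(1-\alpha)z,\quad f^\alpha=\alpha f-(1-\alpha)x,\quad g^\alpha(x)=\alpha g(x)+(1-\alpha)x.
\]
Then $\langle F^\alpha(u_1)-F^\alpha(u_2),u_1-u_2\rangle\le -(\alpha c_2+(1-\alpha))|u_1-u_2|^2\le -\min(c_2,1)|u_1-u_2|^2$, uniformly in $\alpha$, and likewise for $g^\alpha$. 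With this interpolation your contraction estimate goes through with a step size $\delta_0$ depending only on $c_1,c_2,T$, exactly as you wanted. The cost is that the $\alpha=0$ system is no longer decoupled; it is the explicit linear FBSDE solved in the paper's Lemma~8 (by reducing to a single mean-field BSDE for $Y-X$).
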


\subsection{Mean-field BSDEs and McKean-Vlasov SDEs}

This section is devoted to the recall of some basic results on a new type of
BSDEs, the so called mean-field BSDEs; the reader interested in more details
is referred to Buckdahn, Djehiche, Li, and Peng \cite{MFBSDE1} and Buckdahn
and Li et al. \cite{MFBSDE2}.

Let $(\bar{\Omega},\bar{\mathcal{F}},\bar{P})=(\Omega\times \Omega,\mathcal{F%
}\otimes \mathcal{F},P\otimes P)$ be the (non-completed) product of $(\Omega,%
\mathcal{F},P)$ with itself. We endow this product space with the filtration
$\bar{\mathbb{F}}=\{\bar{\mathcal{F}}_t=\mathcal{F}\otimes\mathcal{F}%
_t,0\leq t \leq T\}$. Any random variable $\xi \in L^0(\Omega,\mathcal{F},P)$
originally defined on $\Omega$ is extended canonically to $\bar{\Omega}%
:\xi^{\prime }(\omega^{\prime },\omega)=\xi(\omega^{\prime }),\
(\omega^{\prime },\omega)\in \bar{\Omega}=\Omega \times \Omega$. For any $%
\theta \in L^1(\bar{\Omega},\bar{\mathcal{F}},\bar{P})$ the variable $%
\theta(\cdot,\omega):\Omega \rightarrow \mathbb{R}$ belongs to $L^1(\Omega,%
\mathcal{F},P),P(d\omega)-a.s.$; we denote its expectation by
\begin{equation*}
\mathbb{E}^{\prime
}[\theta(\cdot,\omega)]=\int_{\Omega}\theta(\omega^{\prime
},\omega)P(d\omega^{\prime }).
\end{equation*}
Notice that $\mathbb{E}^{\prime }[\theta]=\mathbb{E}^{\prime 1}(\Omega,%
\mathcal{F},P)$, and
\begin{equation*}
\bar{E}[\theta]=\int_{\bar{\Omega}}\theta d\bar{P}=\int_{\Omega}\mathbb{E}%
^{\prime }[\theta(\cdot,\omega)]P(d\omega)=\mathbb{E}[\mathbb{E}^{\prime
}[\theta]].
\end{equation*}
The driver of mean-field BSDE is a function $f=f(\omega^{\prime },\omega,t,%
\tilde{y},\tilde{z},y,z):\bar{\Omega}\times[0,T]\times\mathbb{R}\times%
\mathbb{R}^d\times\mathbb{R}\times\mathbb{R}^d\rightarrow \mathbb{R}$ which
is $\bar{\mathbb{F}}$-progressively measurable for all $(\tilde{y},\tilde{z}%
,y,z)$, and satisfies the following assumptions.

\begin{enumerate}
\item[\textbf{(H2)}] (i) There exists a constant $C\geq 0$ such that, $\bar{P%
}$-a.s., for all $t\in \lbrack 0,T],y_{1},y_{2},\tilde{y}_{1},\tilde{y}%
_{2}\in \mathbb{R}$, $z_{1},z_{2},\tilde{z}_{1},\tilde{z}_{2}\in \mathbb{R}%
^{d},$ $|f(t,\tilde{y}_{1},\tilde{z}_{1},y_{1},z_{1})-f(t,\tilde{y}_{2},%
\tilde{z}_{2},y_{2},z_{2})|\leq C(|\tilde{y}_{1}-\tilde{y}_{2}|+|\tilde{z}%
_{1}-\tilde{z}_{2}|+|y_{1}-y_{2}|+|z_{1}-z_{2}|).$

(ii) $f(\cdot ,0,0,0,0)\in \mathcal{H}_{\bar{\mathbb{F}}}(0,T;\mathbb{R}).$
\end{enumerate}

The main result about mean-field BSDEs of Buckdahn and Li et al. \cite%
{MFBSDE2} is:

\begin{lemma}
Under the assumptions (H2), for any random variable $\xi \in L^{2}(\Omega ,%
\mathcal{F}_{T},P)$, the mean-field BSDEs
\begin{equation}
Y_{t}=\xi +\int_{t}^{T}\mathbb{E}^{\prime }[f(s,Y_{s}^{\prime
},Z_{s}^{\prime },Y_{s},Z_{s})]ds-\int_{t}^{T}Z_{s}dW_{s},\ \ \ \ 0\leq
t\leq T,  \label{MFBSDE}
\end{equation}%
has a unique adapted solution
\begin{equation*}
(Y_{t},Z_{t})\in \mathcal{S}_{\mathbb{F}}^{2}(0,T;\mathbb{R})\times \mathcal{%
M}_{\mathbb{F}}^{2}(0,T;\mathbb{R}^{d}).
\end{equation*}
\end{lemma}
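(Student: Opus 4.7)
The plan is to prove this by Picard iteration, at each step reducing the mean-field BSDE to a \emph{classical} BSDE (in the variables $(y,z)$) so that the Pardoux--Peng existence and uniqueness result applies, and then setting up a contraction on $\mathcal{H} := \mathcal{S}_{\mathbb{F}}^{2}(0,T;\mathbb{R})\times \mathcal{M}_{\mathbb{F}}^{2}(0,T;\mathbb{R}^{d})$ equipped with the weighted norm $\|(Y,Z)\|_{\beta}^{2}=\mathbb{E}\int_{0}^{T}e^{\beta s}(|Y_{s}|^{2}+|Z_{s}|^{2})\,ds$, for $\beta>0$ to be tuned later.

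First I would fix $(U,V)\in\mathcal{H}$, freeze the primed arguments, and define
\begin{equation*}
\tilde{f}(s,\omega,y,z) := \mathbb{E}'\bigl[f(s,U_{s}',V_{s}',y,z)\bigr](\omega) = \int_{\Omega}f(\omega',\omega,s,U_{s}(\omega'),V_{s}(\omega'),y,z)\,P(d\omega').
\end{equation*}
Fubini together with the $\bar{\mathbb{F}}$-progressive measurability of $f$ makes $\tilde{f}$ an $\mathbb{F}$-progressively measurable function of $(s,\omega)$ for each $(y,z)$; assumption (H2)(i) gives that $\tilde{f}$ is Lipschitz in $(y,z)$ uniformly in $(s,\omega)$; and (H2)(ii) plus the Lipschitz bound yields $\tilde{f}(\cdot,0,0)\in\mathcal{M}_{\mathbb{F}}^{2}(0,T;\mathbb{R})$. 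Hence the \emph{classical} BSDE
\begin{equation*}
Y_{t}=\xi+\int_{t}^{T}\tilde{f}(s,Y_{s},Z_{s})\,ds-\int_{t}^{T}Z_{s}\,dW_{s}
\end{equation*}
admits a unique adapted solution $(Y,Z)\in\mathcal{H}$, which defines a map $\Gamma:(U,V)\mapsto (Y,Z)$.

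It then remains to show that $\Gamma$ is a strict contraction on $(\mathcal{H},\|\cdot\|_{\beta})$ for $\beta$ large. Given two inputs $(U^{i},V^{i})$ with outputs $(Y^{i},Z^{i})$, write $\Delta U=U^{1}-U^{2}$, etc., apply It\^o's formula to $e^{\beta s}|\Delta Y_{s}|^{2}$ and take expectation. The driver difference is controlled by (H2)(i):
\begin{equation*}
|\Delta\tilde{f}(s,\cdot)|\leq C\,\mathbb{E}'\bigl[|\Delta U_{s}'|+|\Delta V_{s}'|\bigr]+C\bigl(|\Delta Y_{s}|+|\Delta Z_{s}|\bigr).
\end{equation*}
Applying Cauchy--Schwarz inside $\mathbb{E}'$, taking expectation under $P$, and using that $U'$ and $U$ have the same law (so $\mathbb{E}\bigl[(\mathbb{E}'[|\Delta U_{s}'|])^{2}\bigr]\leq \bar{\mathbb{E}}[|\Delta U_{s}'|^{2}]=\mathbb{E}|\Delta U_{s}|^{2}$) produces, after the usual Young's inequality absorption of the $|\Delta Y|,|\Delta Z|$ terms, an estimate of the form
\begin{equation*}
\|(Y^{1},Z^{1})-(Y^{2},Z^{2})\|_{\beta}^{2}\leq \frac{C'}{\beta}\,\|(U^{1},V^{1})-(U^{2},V^{2})\|_{\beta}^{2},
\end{equation*}
which is a strict contraction for $\beta$ sufficiently large. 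Banach's fixed point theorem then yields the unique $(Y,Z)\in\mathcal{H}$; the $\mathcal{S}_{\mathbb{F}}^{2}$-bound on $Y$ follows from Doob's inequality applied to the martingale part of the BSDE.

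The main obstacle, and the one place the classical argument genuinely has to be adapted, is the bookkeeping of the three measures $P$, $P'$, $\bar{P}$: one must move the expectation $\mathbb{E}'[\cdot]$ inside and outside of squares, exploit the law-preserving nature of the lift $\xi\mapsto\xi'$ to pass from $\bar{P}$-norms back to $P$-norms, and verify progressive measurability of $\tilde{f}$. Once this is handled cleanly the remainder is a controlled reprise of the Pardoux--Peng contraction proof.
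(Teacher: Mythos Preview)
The paper does not prove this lemma at all: it is stated in Section~2.2 as a recall of the main existence and uniqueness result of Buckdahn, Li and Peng \cite{MFBSDE2}, with no argument given. Your fixed-point sketch is essentially the proof that appears in \cite{MFBSDE2}, so there is nothing to compare against in the present paper; your approach is correct and standard.

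One small remark on the contraction estimate: the clean $C'/\beta$ bound you write is a slight oversimplification, since the $|\Delta Z|^{2}$ term on the left of the It\^o identity does not pick up a factor of $\beta$. What one actually obtains after absorption is something like $(\beta-K_{1})\|\Delta Y\|_{\beta}^{2}+\tfrac{1}{2}\|\Delta Z\|_{\beta}^{2}\leq K_{2}\|(\Delta U,\Delta V)\|_{\beta}^{2}$, and then one either reweights the norm (putting a large constant in front of the $Y$-part) or argues in two steps to conclude the contraction. This is a routine adjustment and does not affect the validity of your plan.
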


\begin{remark}
The driving coefficient of (\ref{MFBSDE}) has to be interpreted as follows:
\begin{eqnarray*}
&&\mathbb{E}^{\prime }[f(s,Y_{s}^{\prime },Z_{s}^{\prime
},Y_{s},Z_{s})](\omega ) \\
&=&\mathbb{E}^{\prime }[f(s,Y_{s}(\omega ^{\prime }),Z_{s}(\omega ^{\prime
}),Y_{s}(\omega ),Z_{s}(\omega ))] \\
&=&\int_{\Omega }f(\omega ^{\prime },\omega ,s,Y_{s}(\omega ^{\prime
}),Z_{s}(\omega ^{\prime }),Y_{s}(\omega ),Z_{s}(\omega ))P(d\omega ^{\prime
}).
\end{eqnarray*}
\end{remark}

\noindent We shall also consider McKean-Vlasov SDEs (see, e.g., Buckdahn and
Li et al. \cite{MFBSDE2}). Let $b: \bar{\Omega} \times [0,T]\times \mathbb{R}
\times \mathbb{R}^d \rightarrow \mathbb{R}$ and $\sigma: \bar{\Omega} \times
[0,T]\times \mathbb{R} \times \mathbb{R}^d \rightarrow \mathbb{R}^{d}$ be
two measurable functions supposed to satisfy the following conditions:

\begin{enumerate}
\item[\textbf{(H3)}] (i) $b(\cdot ,\tilde{x},x)$ and $\sigma (\cdot ,\tilde{x%
},x)$ are $\bar{\mathbb{F}}$-progressively measurable continuous processes
for all $\tilde{x},x\in \mathbb{R}$, and there exists some constant $C>0$
such that
\begin{equation*}
|b(t,\tilde{x},x)|+|\sigma (t,\tilde{x},x)|\leq C(1+|\tilde{x}|+|x|),\ a.s.,
\end{equation*}%
for all $0\leq t\leq T$;

(ii) $b$ and $\sigma $ are Lipschitz in $\tilde{x},x$, i.e., there is some
constant $C>0$ such that
\begin{equation*}
|b(t,\tilde{x}_{1},x_{1})-b(t,\tilde{x}_{2},x_{2})|+|\sigma (t,\tilde{x}%
_{1},x_{1})-\sigma (t,\tilde{x}_{2},x_{2})|\leq C(|\tilde{x}_{1}-\tilde{x}%
_{2}|+|x_{1}-x_{2}|),a.s.
\end{equation*}%
for all $0\leq t\leq T$, $\tilde{x}_{1},\tilde{x}_{2},x_{1},x_{2}\in \mathbb{%
R}$.
\end{enumerate}

\noindent The McKean-Vlasov SDEs parameterized by the initial condition $%
(t,\zeta )\in \lbrack 0,T]\times L^{2}(\Omega ,\mathcal{F}_{t},P;\mathbb{R})$
is given as follows:
\begin{equation*}
\left\{
\begin{array}{ll}
dX_{s}^{t,\zeta }=\mathbb{E}^{\prime }\left[ b\left( s,\left( X_{s}^{t,\zeta
}\right) ^{\prime },X_{s}^{t,\zeta }\right) \right] ds+\mathbb{E}^{\prime }%
\left[ \sigma \left( s,\left( X_{s}^{t,\zeta }\right) ^{\prime
},X_{s}^{t,\zeta }\right) \right] dW_{s},\label{Mckean_Vlasov} &  \\
X_{t}^{t,\zeta }=\zeta ,\ \ \ s\in \lbrack t,T]. &
\end{array}%
\right.
\end{equation*}%
\noindent We recall that, due to our notational convention,
\begin{equation*}
\mathbb{E}^{\prime }\left[ b\left( s,\left( X_{s}^{t,\zeta }\right) ^{\prime
},X_{s}^{t,\zeta }\right) \right] (\omega )=\int_{\Omega }b\left( \omega
^{\prime },\omega ,s,X_{s}^{t,\zeta }\left( \omega ^{\prime }\right)
,X_{s}^{t,\zeta }\left( \omega \right) \right) P(d\omega ^{\prime }),\ \
\omega \in \Omega .
\end{equation*}

\begin{lemma}
Under Assumption (H3), SDEs () has a unique strong solution.
\end{lemma}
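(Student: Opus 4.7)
The plan is to apply a Banach fixed point argument in the Hilbert space $\mathcal{S}_{\mathbb{F}}^{2}(t,T;\mathbb{R})$. Given $X \in \mathcal{S}_{\mathbb{F}}^{2}(t,T;\mathbb{R})$, I would define the map
\[
\Psi(X)_s := \zeta + \int_t^s \mathbb{E}^{\prime}\bigl[b(r,X_r^{\prime},X_r)\bigr]dr + \int_t^s \mathbb{E}^{\prime}\bigl[\sigma(r,X_r^{\prime},X_r)\bigr]dW_r,\quad s\in[t,T],
\]
and seek its fixed point. The first task is to check that $\Psi$ is well defined, namely that $(\omega,r)\mapsto \mathbb{E}^{\prime}[b(r,X_r^{\prime},X_r)](\omega)$ is $\mathbb{F}$-progressively measurable; this follows from Fubini's theorem together with the joint measurability hypotheses in (H3)(i). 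The linear growth condition then gives
\[
\bigl|\mathbb{E}^{\prime}[b(r,X_r^{\prime},X_r)](\omega)\bigr| \leq C\bigl(1 + \mathbb{E}[|X_r|] + |X_r(\omega)|\bigr),
\]
and similarly for $\sigma$, so combining with Burkholder--Davis--Gundy and $\zeta \in L^{2}(\Omega,\mathcal{F}_t,P)$ one obtains $\Psi(X) \in \mathcal{S}_{\mathbb{F}}^{2}(t,T;\mathbb{R})$.

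For the contraction step, the key observation is that $\mathbb{E}^{\prime}[|X_r^{\prime}-Y_r^{\prime}|](\omega)$ is the deterministic quantity $\mathbb{E}[|X_r-Y_r|]$. Together with the Lipschitz hypothesis (H3)(ii) and Jensen's inequality, this yields
\[
\mathbb{E}\Bigl[\bigl|\mathbb{E}^{\prime}[b(r,X_r^{\prime},X_r)] - \mathbb{E}^{\prime}[b(r,Y_r^{\prime},Y_r)]\bigr|^2\Bigr] \leq 4C^{2}\,\mathbb{E}\bigl[|X_r - Y_r|^2\bigr],
\]
and the analogous estimate for $\sigma$. Applying Cauchy--Schwarz to the drift integral and BDG to the stochastic integral then produces
\[
\mathbb{E}\Bigl[\sup_{s\in[t,\tau]}|\Psi(X)_s - \Psi(Y)_s|^2\Bigr] \leq K(\tau - t)\,\mathbb{E}\Bigl[\sup_{s\in[t,\tau]}|X_s - Y_s|^2\Bigr]
\]
for a constant $K$ depending only on $C$ and $T$. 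Choosing $\tau - t$ small enough makes $\Psi$ a strict contraction on $[t,\tau]$; since $K$ does not depend on the initial point, the fixed point may be propagated to all of $[t,T]$ in finitely many steps, yielding both existence and uniqueness. A cleaner variant is to equip $\mathcal{S}_{\mathbb{F}}^{2}$ with the weighted norm $\|X\|_\beta^{2} = \mathbb{E}[\sup_{s\in[t,T]} e^{-\beta s}|X_s|^{2}]$ and choose $\beta$ sufficiently large to obtain a global contraction in a single step.

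The main technical subtlety, rather than any hard inequality, is the careful bookkeeping associated with the $\mathbb{E}^{\prime}$ operator and the product structure on $\bar{\Omega}$: one must repeatedly pass between pointwise-in-$\omega$ bounds, estimates on $\bar{\Omega}$ via $\bar{E}[\cdot]=\mathbb{E}[\mathbb{E}^{\prime}[\cdot]]$, and estimates on $\Omega$, while verifying at each step that the integrands lie in the correct measurability class. Once this accounting is in place, the proof reduces to the standard Picard iteration scheme for SDEs, which is why the result can legitimately be attributed to \cite{MFBSDE2}.
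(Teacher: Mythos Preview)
Your proposal is a correct sketch of the standard Picard iteration adapted to the mean-field setting; the identification $\mathbb{E}'[|X_r'-Y_r'|]=\mathbb{E}[|X_r-Y_r|]$ is exactly the point that reduces the Lipschitz estimate to the classical one, and the remaining steps (BDG, small-interval contraction, patching) are routine.

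As for comparison: the paper does not actually prove this lemma. It is stated without proof in Section~2.2 as a known result, with the reader referred to Buckdahn, Li and Peng \cite{MFBSDE2} (where the argument is indeed the fixed-point scheme you outline). So there is nothing in the paper to compare your approach against; your final sentence already anticipates this.
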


\begin{remark}
From standard arguments we also get that, for any $p\geq 2$, there exists $%
C_{p}\in \mathbb{R}$, which only depends on the Lipschitz and the growth
constants of $b$ and $\sigma $, such that for all $t\in \lbrack 0,T]$ and $%
\zeta ,\zeta ^{\prime p}(\Omega ,\mathcal{F}_{t},P;\mathbb{R})$,
\begin{eqnarray}
&&\mathbb{E}\Big[\sup\limits_{t\leq s\leq T}|X_{s}^{t,\zeta }-X_{s}^{t,\zeta
^{\prime }}|^{p}|\mathcal{F}_{t}\Big]\leq C_{p}|\zeta -\zeta ^{\prime p}|,\
\ \ \ a.s.,  \notag \\
&&\mathbb{E}\Big[\sup\limits_{t\leq s\leq T}|X_{s}^{t,\zeta }|^{p}|\mathcal{F%
}_{t}\Big]\leq C_{p}(1+|\zeta |^{p}),\ \ \ \ a.s.,
\label{Estimates for McKean-Vlasov} \\
&&\mathbb{E}\Big[\sup\limits_{t\leq s\leq t+\delta }|X_{s}^{t,\zeta }-\zeta
|^{p}|\mathcal{F}_{t}\Big]\leq C_{p}(1+|\zeta |^{p})\delta ^{\frac{p}{2}},
\notag
\end{eqnarray}%
P-a.s., for all $\delta >0$ with $t+\delta \leq T$ .

These, in the classical case, well-known standard estimates can be
consulted, for instance, in Ikeda and Watanabe \cite{Ikeda Watanabe}(pp.
166-168) and also in Karatzas and Shreve \cite{Karatzas}(pp. 289-290).
\end{remark}

\section{Fully coupled Mean-field FBSDEs}

In this section, we shall investigate a new type of FBSDEs called fully
coupled mean-field FBSDEs as follows:
\begin{equation*}
\begin{array}{ll}
X_{t}=X_{0}+\int_{0}^{t}\mathbb{E}^{\prime }[b(s,X_{s}^{\prime
},Y_{s}^{\prime },Z_{s}^{\prime },X_{s},Y_{s},Z_{s})]ds+\int_{0}^{t}\mathbb{E%
}^{\prime }[\sigma (s,X_{s}^{\prime },Y_{s}^{\prime },Z_{s}^{\prime
},X_{s},Y_{s},Z_{s})]dW_{s}, &  \\
&  \\
Y_{t}=\Phi (X_{T})+\int_{t}^{T}\mathbb{E}^{\prime }[f(s,X_{s}^{\prime
},Y_{s}^{\prime },Z_{s}^{\prime
},X_{s},Y_{s},Z_{s})]ds-\int_{t}^{T}Z_{s}dW_{s},\label{coupled FBSDE}\ \ \ \
t\in \lbrack 0,T]. &
\end{array}%
\end{equation*}%
Here the processes $X,Y,Z$ take values in $\mathbb{R},\mathbb{R},\mathbb{R}%
^{d}$ respectively; and $b,\sigma ,\Phi $ and $f$ take values in $\mathbb{R},%
\mathbb{R}^{d},\mathbb{R}$ and $\mathbb{R}$ respectively.

\begin{remark}
The driving coefficient here has to the same interpretation as Lemma 2:
\begin{eqnarray*}
\lefteqn{\mathbb{E}'[\psi(s, X'_s, Y'_s, Z'_s,X_s, Y_s,Z_s)](\omega)} \\
&=&\mathbb{E}^{\prime }[\psi(s,X_s(\omega^{\prime }), Y_s(\omega^{\prime }),
Z_s(\omega^{\prime }),X_s(\omega), Y_s(\omega),Z_s(\omega))] \\
&=&\int_{\Omega}\psi(\omega^{\prime },\omega, s, X_s(\omega^{\prime }),
Y_s(\omega^{\prime }), Z_s(\omega^{\prime }),X_s(\omega),
Y_s(\omega),Z_s(\omega))P(d\omega^{\prime }).
\end{eqnarray*}
for $\psi=b,\sigma,f.$
\end{remark}

For convenience, we will use the following notations in this section: Let $%
<,>$ denote the usual inner product in $\mathbb{R}^{n}$ and we use the usual
Euclidean norm in $\mathbb{R}^{n}$. For $\Theta =(\tilde{x},\tilde{y},\tilde{%
z},x,y,z)\in \mathbb{R}\times \mathbb{R}\times \mathbb{R}^{d}\times \mathbb{R%
}\times \mathbb{R}\times \mathbb{R}^{d},$
\begin{equation*}
F(t,\Theta )=(-f(t,\Theta ),b(t,\Theta ),\sigma (t,\Theta )).
\end{equation*}

Now we give the standard assumptions on the coefficients of mean-field FBSDE:

\begin{enumerate}
\item[\textbf{(H4)}] For each $\Theta \in \mathbb{R}\times \mathbb{R}\times
\mathbb{R}^{d}\times \mathbb{R}\times \mathbb{R}\times \mathbb{R}^{d},\
F(\cdot ,\Theta )\in \mathcal{M}^{2}(0,T;\mathbb{R}\times \mathbb{R}\times
\mathbb{R}^{d}\times \mathbb{R}\times \mathbb{R}\times \mathbb{R}^{d})$, and
for each $x\in \mathbb{R},\ g(x)\in L^{2}(\Omega ,\mathcal{F},\mathbb{R});$
there exists a constant $C>0$, such that:
\begin{eqnarray*}
&&|F(t,\Theta _{1})-F(t,\Theta _{2})|\leq C|\Theta _{1}-\Theta _{2}|,\ \
P-a.s.,\ a.e.\ t\in \mathbb{R}^{+}, \\
&&\ \ \ \ \ \Theta _{i}=(\tilde{x}_{i},\tilde{y}_{i},\tilde{z}%
_{i},x_{i},y_{i},z_{i})\in \mathbb{R}\times \mathbb{R}\times \mathbb{R}%
^{d}\times \mathbb{R}\times \mathbb{R}\times \mathbb{R}^{d},\ \ \ (i=1,2),
\end{eqnarray*}%
and
\begin{equation*}
|\Phi (x_{1})-\Phi (x_{2})|\leq C|x_{1}-x_{2}|,\ \ P-a.s.,\forall
(x_{1},x_{2})\in \mathbb{R}\times \mathbb{R}.
\end{equation*}
\end{enumerate}

The following monotone conditions are our main assumptions:

\begin{enumerate}
\item[\textbf{(H5)}] For $\Theta _{i}=(\tilde{x}_{i},\tilde{y}_{i},\tilde{z}%
_{i},x_{i},y_{i},z_{i})\in \mathbb{R}\times \mathbb{R}\times \mathbb{R}%
^{d}\times \mathbb{R}\times \mathbb{R}\times \mathbb{R}^{d}$, let $%
u_{i}=(x_{i},y_{i},z_{i})\in \mathbb{R}\times \mathbb{R}\times \mathbb{R}%
^{d} $, then $\Theta _{i}=(\tilde{u}_{i},u_{i})\ (i=1,2)$. We assume that
\begin{eqnarray*}
&&\mathbb{E}<F(t,\Theta _{1})-F(t,\Theta _{2}),u_{1}-u_{2}>\ \leq \ -C_{1}%
\mathbb{E}(|u_{1}-u_{2}|^{2})\ \ \ \ P-a.s.,a.e.\ t\in \mathbb{R}^{+}, \\
&<&\Phi (x_{1})-\Phi (x_{2}),x_{1}-x_{2}>\ \geq \ \mu
_{1}|x_{1}-x_{2}|^{2},\ \ P-a.s.,\forall (x_{1},x_{2})\in \mathbb{R}\times
\mathbb{R},
\end{eqnarray*}%
where $C_{1}$ and $\mu _{1}$ are given positive constants.
\end{enumerate}

For the mean-field FBSDE (\ref{coupled FBSDE}), we have the the following
main result of this section.

\begin{theorem}
Under the assumptions (H4) and (H5), there exists a unique adapted solution
(X,Y,Z) for mean-field FBSDEs (\ref{coupled FBSDE}).
\end{theorem}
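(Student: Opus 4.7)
My plan is to follow the continuation-in-parameter method of Hu and Peng \cite{Hu Peng}, modified to accommodate the mean-field dependence introduced by $\mathbb{E}'[\cdot]$. Uniqueness will follow directly from monotonicity, while existence proceeds by interpolating with a trivially solvable reference problem and propagating solvability via two technical lemmas.

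\emph{Uniqueness.} Let $(X^i,Y^i,Z^i)$, $i=1,2$, be two adapted solutions of the system, set $\hat X=X^1-X^2$, similarly $\hat Y,\hat Z$, and $\hat u_s=(\hat X_s,\hat Y_s,\hat Z_s)$. I would apply It\^o's formula to $\langle\hat X_t,\hat Y_t\rangle$ on $[0,T]$. Using $\hat X_0=0$, $\hat Y_T=\Phi(X^1_T)-\Phi(X^2_T)$, and Fubini to convert the $\mathbb{E}'$-integrals into integration over the product space, taking expectation yields
\[
\mathbb{E}\langle \Phi(X^1_T)-\Phi(X^2_T),\hat X_T\rangle \;=\; \int_0^T \mathbb{E}\langle F(s,\Theta^1_s)-F(s,\Theta^2_s),\hat u_s\rangle\,ds.
\]
By (H5) the right side is bounded above by $-C_1\int_0^T\mathbb{E}|\hat u_s|^2\,ds$, while the left side is at least $\mu_1\mathbb{E}|\hat X_T|^2\geq 0$. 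Hence $\hat u\equiv 0$ on $[0,T]$, giving uniqueness.

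\emph{Existence.} For each $\alpha\in[0,1]$ I would introduce the interpolated family
\[
F^\alpha(t,\Theta) := \alpha F(t,\Theta) + (1-\alpha) F_0(t,\Theta),\qquad \Phi^\alpha(x) := \alpha\Phi(x) + (1-\alpha)x,
\]
augmented with external $\mathbb{F}$-adapted forcings $(\phi^b,\phi^\sigma,\phi^f)\in \mathcal{M}_{\mathbb{F}}^2$ and a terminal perturbation $\xi\in L^2(\Omega,\mathcal{F}_T,P;\mathbb{R})$, where $F_0$ is chosen so that at $\alpha=0$ the system decouples into a McKean-Vlasov SDE (solvable by Lemma 3) and an independent mean-field BSDE (solvable by Lemma 2). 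The coefficients $F^\alpha$ and $\Phi^\alpha$ inherit (H4)-(H5) with constants independent of $\alpha$. The first technical lemma should provide an a priori $L^2$-estimate for any adapted solution of the $\alpha$-system in terms of the forcings alone; the second asserts the existence of a step $\delta_0>0$, \emph{independent of $\alpha_0$}, such that solvability of the $\alpha_0$-system for every choice of forcing implies solvability of the $(\alpha_0+\delta)$-system for every $\delta\in(0,\delta_0]$. This second step is proved by Picard iteration: given an iterate $u^k=(X^k,Y^k,Z^k)$, let $u^{k+1}$ solve the $\alpha_0$-system whose forcing absorbs $\delta[F(\cdot,\Theta^k)-F_0(\cdot,\Theta^k)]$ together with $\delta[\Phi(X^k_T)-X^k_T]$; the a priori estimate together with the mean-field monotonicity yields a contraction with constant $c\delta$, the constant $c$ depending only on $C,C_1,\mu_1,T$. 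Since $\alpha=0$ is solvable, at most $\lceil 1/\delta_0\rceil$ iterations reach $\alpha=1$.

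The principal obstacle is the derivation of this a priori estimate on the product probability space. In the classical Hu-Peng setting the monotonicity is pointwise in $\omega$, whereas (H5) is only monotone \emph{in expectation}. After applying It\^o to $\langle X^1-X^2,Y^1-Y^2\rangle$ for any two solutions (or two iterates), the resulting cross terms take the form $\mathbb{E}\langle \mathbb{E}'[F(s,\Theta^1)-F(s,\Theta^2)],u^1_s-u^2_s\rangle$, in which $\omega$ and $\omega'$ play asymmetric roles; by Fubini this coincides with the left side of the inequality in (H5) on the product space, after which the averaged monotonicity can be applied. Performing this Fubini-and-symmetrisation step carefully enough that $\delta_0$ can be chosen uniformly in $\alpha_0$, while absorbing the cross-coupling of $\omega$ and $\omega'$ introduced by $\mathbb{E}'[\cdot]$, is precisely what distinguishes the present technical lemmas from their classical counterparts in \cite{Hu Peng}.
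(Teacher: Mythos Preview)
Your uniqueness argument and the overall continuation-in-$\alpha$ strategy match the paper's approach. The gap is in your choice of the reference system $F_0$ at $\alpha=0$.

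You propose choosing $F_0$ so that the $\alpha=0$ system decouples into an independent McKean--Vlasov SDE and mean-field BSDE. But such a decoupled $F_0$ cannot satisfy the monotonicity (H5) with a strictly positive constant: if $b_0,\sigma_0$ do not depend on $(\tilde y,\tilde z,y,z)$ and $f_0$ does not depend on $(\tilde x,x)$, then $\langle F_0(\Theta_1)-F_0(\Theta_2),u_1-u_2\rangle$ contains only cross terms and cannot be bounded above by $-c\,|u_1-u_2|^2$. Consequently $F^\alpha=\alpha F+(1-\alpha)F_0$ has monotonicity constant at best $\alpha C_1$, which degenerates as $\alpha\downarrow 0$. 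The step size $\delta_0$ in your second lemma then cannot be chosen uniformly in $\alpha_0$, and the continuation stalls near $\alpha=0$; your assertion that ``$F^\alpha$ and $\Phi^\alpha$ inherit (H4)--(H5) with constants independent of $\alpha$'' fails precisely here.

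The paper instead takes $F_0(t,\Theta)=(\tilde x+x,\,-\tilde y-y,\,-\tilde z-z)$ and $\Phi_0(x)=x$, which is \emph{fully coupled} but satisfies (H5) with a fixed positive constant (after Fubini it contributes $-(|\mathbb{E}[\hat u]|^2+\mathbb{E}|\hat u|^2)$), so $F^\alpha$ is uniformly monotone with constant $\min(1,C_1,\mu_1)$ for all $\alpha\in[0,1]$. The price is that the $\alpha=0$ system is no longer trivially decoupled; the paper solves it (Lemma~8) by the substitution $\ddot Y=Y-X$, which reduces it to a mean-field BSDE for $(\ddot Y,Z)$ followed by a linear McKean--Vlasov SDE for $X$. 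With that correction to the reference system, the rest of your plan---the Picard iteration giving a contraction with constant $c\delta$, and the Fubini/symmetrisation to invoke the averaged monotonicity---is exactly what the paper carries out in Lemma~9.
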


The proof of this theorem is similar to that of Theorem 3.1 in \cite{Hu Peng}
except the mean-field term. However, to be self-contained, we intend to give
the proof. Before giving the proof of this theorem, we need the two
technical lemmas below whose proof will be given in the sequel.

\begin{lemma}
Suppose that $(\gamma(\cdot),\phi(\cdot),\varphi(\cdot))\in \mathcal{M}%
^2(0,T;\mathbb{R}\times \mathbb{R}^d\times \mathbb{R}),\ \xi \in L^2(\Omega,%
\mathcal{F}_T;\mathbb{R})$, then the following linear mean-field
forward-backward stochastic differential equations
\begin{eqnarray}
X_t&=&X_0+\int_0^t\big(-\mathbb{E}^{\prime }[Y^{\prime }_s]-Y_s+\gamma(s)%
\big)ds+\int_0^t\big(-\mathbb{E}^{\prime }[Z^{\prime }_s]-Z_s+\phi(s)\big)%
dW_s,  \label{lem 3.1 (1)} \\
Y_t&=&\xi+X_T+\int_t^T[\mathbb{E}^{\prime }[X^{\prime
}_s]+X_s-\varphi(s)]ds-\int_t^TZ_s dW_s,  \label{lem 3.1 (2)}
\end{eqnarray}
have a unique adapted solution: $(X,Y,Z)\in \mathcal{M}^2(0,T;\mathbb{R}%
\times \mathbb{R}\times \mathbb{R}^d).$
\end{lemma}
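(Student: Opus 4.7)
I would follow the continuation-in-parameter (homotopy) method of Hu and Peng, suitably adapted to the mean-field setting. Uniqueness will rest on an Itô-product computation that exploits the ``antagonistic'' sign structure between the forward drift/diffusion ($-\mathbb{E}'[Y'_s]-Y_s$, $-\mathbb{E}'[Z'_s]-Z_s$) and the backward driver ($+\mathbb{E}'[X'_s]+X_s$) together with the terminal condition $Y_T=\xi+X_T$. The same structure will drive the a priori estimate needed for the continuation step in the existence part.

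\textbf{Uniqueness.} Take two solutions $(X^i,Y^i,Z^i)$, $i=1,2$, and set $(\hat X,\hat Y,\hat Z)$ for their differences, which solve the homogeneous version of (\ref{lem 3.1 (1)})--(\ref{lem 3.1 (2)}) with $\gamma,\phi,\varphi,\xi$ all zero, $\hat X_0=0$ and $\hat Y_T=\hat X_T$. Applying Itô's formula to $\hat X_t\hat Y_t$ and taking expectation, the Fubini identity $\mathbb{E}[\hat X_s\,\mathbb{E}'[\hat X'_s]]=|\mathbb{E}\hat X_s|^2$ (and likewise for $\hat Y$, $\hat Z$) turns all integrand terms into sign-definite squares, yielding
\begin{equation*}
\mathbb{E}|\hat X_T|^2+\mathbb{E}\int_0^T\bigl(|\hat X_s|^2+|\hat Y_s|^2+|\hat Z_s|^2+|\mathbb{E}\hat X_s|^2+|\mathbb{E}\hat Y_s|^2+|\mathbb{E}\hat Z_s|^2\bigr)ds=0,
\end{equation*}
whence $\hat X\equiv\hat Y\equiv\hat Z\equiv 0$.

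\textbf{Existence.} For $\alpha\in[0,1]$ consider the family of perturbed systems in which the couplings appear with multiplier $\alpha$ and arbitrary extra data $(\gamma_0,\phi_0,\varphi_0,\xi_0)$ enter additively on the right-hand sides and in the terminal. Let $(H_\alpha)$ be the statement that, for every quadruple of such data, the system has a unique $\mathcal{M}^2$-solution. Then $(H_0)$ holds trivially: the forward equation reduces to a classical SDE with bounded coefficients, solvable directly, and the backward equation becomes a standard (non-coupled) BSDE with known terminal, solvable by martingale representation. The goal is to bootstrap to $(H_1)$ by showing: there exists $\delta_0>0$, independent of $\alpha$, so that $(H_{\alpha_0})\Rightarrow(H_{\alpha_0+\delta})$ for every $\delta\in(0,\delta_0]$ with $\alpha_0+\delta\le 1$. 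Given $(\bar X,\bar Y,\bar Z)\in\mathcal{M}^2$, define $(X,Y,Z)$ as the unique solution, granted by $(H_{\alpha_0})$, of the $\alpha_0$-system in which the free data are shifted by the $\delta$-multiples of the relevant frozen terms in $(\bar X,\bar Y,\bar Z)$; a fixed point of the resulting map $\Lambda$ solves the $(\alpha_0+\delta)$-system. Applying the same Itô-on-the-product trick as in uniqueness to the differences of two outputs of $\Lambda$ gives a bound
\begin{equation*}
\|(\hat X,\hat Y,\hat Z)\|_{\mathcal{M}^2}^2\le C\delta\,\|(\bar{\hat X},\bar{\hat Y},\bar{\hat Z})\|_{\mathcal{M}^2}^2
\end{equation*}
with $C$ depending only on $T$ and absolute constants; choosing $\delta_0=1/(2C)$ makes $\Lambda$ a contraction and iterating the step $1/\delta_0$ times reaches $\alpha=1$.

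\textbf{Main obstacle.} The delicate point is ensuring that the mean-field cross terms do not destroy the cancellation that underlies the Hu--Peng scheme. In the classical (non mean-field) case the Itô-product expansion lines up so that drift$_\text{fwd}\cdot Y$ and driver$_\text{bwd}\cdot X$ combine into one sign-definite expression; here one also gets terms like $\mathbb{E}[\hat X_s\,\mathbb{E}'[\hat Y'_s]]=\mathbb{E}[\hat X_s]\cdot\mathbb{E}[\hat Y_s]$. Fortunately the chosen coefficient structure produces the matching signs on the pathwise part ($\hat X_s^2$, $\hat Y_s^2$, $|\hat Z_s|^2$) \emph{and} on the mean part ($|\mathbb{E}\hat X_s|^2$, etc.), so everything stays non-negative and the uniform constant $C$ in the induction estimate is controlled independently of $\alpha_0$. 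Verifying this clean cancellation, and that the Fubini interchanges needed for it are legitimate at the $\mathcal{M}^2$-regularity of the solution, is the step that requires the most care.
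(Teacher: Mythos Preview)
Your uniqueness argument is essentially the same as the paper's, which refers the reader back to the It\^o-product computation used for Theorem~7.

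For existence, however, you take a genuinely different route. The paper does \emph{not} run a continuation in $\alpha$ for this particular linear system. Instead it decouples by a change of variables: first solve the stand-alone mean-field BSDE
\[
\ddot Y_t=\xi+\int_t^T\bigl(-\mathbb{E}[\ddot Y_s]-\ddot Y_s-\varphi(s)+\gamma(s)\bigr)ds-\int_t^T\bigl(\mathbb{E}[\ddot Z_s]+2\ddot Z_s-\phi(s)\bigr)dW_s,
\]
which is well-posed by Lemma~2; then solve the linear McKean--Vlasov SDE for $X$ with $(\ddot Y,\ddot Z)$ treated as given data; finally set $Y=\ddot Y+X$, $Z=\ddot Z$ and verify directly that $(X,Y,Z)$ solves (\ref{lem 3.1 (1)})--(\ref{lem 3.1 (2)}). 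This is a one-shot construction with no iteration or fixed-point argument. Your homotopy approach is also correct---the sign structure you identify does give the clean a~priori estimate needed for the contraction step---and it has the conceptual advantage of reusing exactly the machinery of the subsequent lemma (Lemma~9) for the general nonlinear case. The trade-off is that in the paper's architecture this lemma \emph{is} the base case $\alpha=0$ for that later continuation, so the substitution trick keeps the two arguments logically independent and the base case genuinely elementary, whereas your version proves the base case by a miniature instance of the same induction it is meant to seed.
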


\noindent Now, we define, for any given $\alpha \in \mathbb{R}$,
\begin{eqnarray*}
&&b^{\alpha }(t,\tilde{x},\tilde{y},\tilde{z},x,y,z)=\alpha b(t,\tilde{x},%
\tilde{y},\tilde{z},x,y,z)+(1-\alpha )(-\tilde{y}-y), \\
&&\sigma ^{\alpha }(t,\tilde{x},\tilde{y},\tilde{z},x,y,z)=\alpha \sigma (t,%
\tilde{x},\tilde{y},\tilde{z},x,y,z)+(1-\alpha )(-\tilde{z}-z), \\
&&f^{\alpha }(t,\tilde{x},\tilde{y},\tilde{z},x,y,z)=\alpha f(t,\tilde{x},%
\tilde{y},\tilde{z},x,y,z)+(\alpha -1)(-\tilde{x}-x), \\
&&\Phi ^{\alpha }(x)=\alpha \Phi (x)+(1-\alpha )(x).
\end{eqnarray*}%
and consider the following equations:
\begin{eqnarray}
&&X_{t}=X_{0}+\int_{0}^{t}\big[\bar{b}^{\alpha }(s,\Lambda _{s})+\gamma (s)%
\big]ds+\int_{0}^{t}\big[\bar{\sigma}^{\alpha }(s,\Lambda _{s})+\phi (s)\big]%
dW_{s},  \label{Lem3.2(1)} \\
&&Y_{t}=\big(\Phi ^{\alpha }(X_{T})+\xi \big)+\int_{t}^{T}\big[\bar{f}%
^{\alpha }(s,\Lambda _{s})-\varphi (s)\big]ds-\int_{t}^{T}Z_{s}dW_{s},
\label{Lem3.2(2)}
\end{eqnarray}%
where we use the notation
\begin{equation*}
\Lambda _{s}=(X_{s}^{\prime },Y_{s}^{\prime },Z_{s}^{\prime
},X_{s},Y_{s},Z_{s}),
\end{equation*}%
and
\begin{equation*}
\bar{\psi}(s,\Lambda _{s})=\mathbb{E}^{\prime }[\psi (X_{s}^{\prime
},Y_{s}^{\prime },Z_{s}^{\prime },X_{s},Y_{s},Z_{s})],
\end{equation*}%
for $\psi =b,\sigma ,f.$ Then we can rewrite
\begin{eqnarray*}
\bar{b}^{\alpha }(s,\Lambda _{s}) &=&\alpha \bar{b}(s,\Lambda
_{s})+(1-\alpha )(-\mathbb{E}^{\prime }[Y_{s}^{\prime }]-Y_{s}), \\
\bar{\sigma}^{\alpha }(s,\Lambda _{s}) &=&\alpha \bar{\sigma}(s,\Lambda
_{s})+(1-\alpha )(-\mathbb{E}^{\prime }[Z_{s}^{\prime }]-Z_{s}), \\
\bar{f}^{\alpha }(s,\Lambda _{s}) &=&\alpha \bar{f}(s,\Lambda _{s})+(\alpha
-1)(-\mathbb{E}^{\prime }[X_{s}^{\prime }]-X_{s}).
\end{eqnarray*}

\begin{lemma}
For a given $\alpha_0 \in [0,1)$ and for any $(\gamma(\cdot),\phi(\cdot),%
\varphi(\cdot))\in \mathcal{M}^2(0,T; \mathbb{R} \times \mathbb{R}^d\times
\mathbb{R}),\ \xi\in L^2(\Omega,\mathcal{F}_T,P;\mathbb{R})$, assume that
Eqs (\ref{Lem3.2(1)}) and (\ref{Lem3.2(2)}) have an adapted solution. Then
there exists a $\delta_0 \in (0,1)$ which depends only on $c_1,c_2$ and $T$,
such that for all $\alpha \in [\alpha_0,\alpha_{0}+\delta_{0}]$, and for any
$(\gamma(\cdot),\phi(\cdot),\varphi(\cdot))\in \mathcal{M}^2(0,T; \mathbb{R}
\times \mathbb{R}^d\times \mathbb{R}),\ \xi\in L^2(\Omega,\mathcal{F}_T,P;%
\mathbb{R})$, Eqs (\ref{Lem3.2(1)}) and (\ref{Lem3.2(2)}) have an adapted
solution.
\end{lemma}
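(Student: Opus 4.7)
The plan is a continuation-in-$\alpha$ argument via Banach's fixed-point theorem. The key observation is that $\bar b^{\alpha}, \bar\sigma^{\alpha}, \bar f^{\alpha}, \Phi^{\alpha}$ are affine in $\alpha$, so
\begin{align*}
\bar{b}^{\alpha_{0}+\delta}(s,\Lambda_{s}) &= \bar{b}^{\alpha_{0}}(s,\Lambda_{s}) + \delta\bigl(\bar{b}(s,\Lambda_{s})+\mathbb{E}^{\prime}[Y^{\prime}_{s}]+Y_{s}\bigr),\\
\bar{\sigma}^{\alpha_{0}+\delta}(s,\Lambda_{s}) &= \bar{\sigma}^{\alpha_{0}}(s,\Lambda_{s}) + \delta\bigl(\bar{\sigma}(s,\Lambda_{s})+\mathbb{E}^{\prime}[Z^{\prime}_{s}]+Z_{s}\bigr),\\
\bar{f}^{\alpha_{0}+\delta}(s,\Lambda_{s}) &= \bar{f}^{\alpha_{0}}(s,\Lambda_{s}) + \delta\bigl(\bar{f}(s,\Lambda_{s})-\mathbb{E}^{\prime}[X^{\prime}_{s}]-X_{s}\bigr),\\
\Phi^{\alpha_{0}+\delta}(X_{T}) &= \Phi^{\alpha_{0}}(X_{T}) + \delta\bigl(\Phi(X_{T})-X_{T}\bigr).
\end{align*}
Given an input $(x,y,z)\in \mathcal{M}^{2}(0,T;\mathbb{R}\times \mathbb{R}\times \mathbb{R}^{d})$, I freeze the $\delta$-terms above at $(x,y,z)$ and define $\mathcal{T}(x,y,z)=(X,Y,Z)$ to be the unique adapted solution of the $\alpha_{0}$-system whose free data $(\gamma,\phi,\varphi,\xi)$ has been augmented by these frozen perturbations. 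The Lipschitz/linear-growth condition (H4) places the augmented data in $\mathcal{M}^{2}\times L^{2}$, so the standing hypothesis of the lemma makes $\mathcal{T}$ well-defined, and any fixed point of $\mathcal{T}$ is, by construction, a solution of (\ref{Lem3.2(1)})--(\ref{Lem3.2(2)}) at level $\alpha_{0}+\delta$.

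To show $\mathcal{T}$ is a contraction for small $\delta$, I take two inputs $(x^{i},y^{i},z^{i})$ with outputs $(X^{i},Y^{i},Z^{i})$, $i=1,2$, denote all differences by hats, and apply It\^o's formula to $\hat{X}_{s}\hat{Y}_{s}$ on $[0,T]$ (using $\hat{X}_{0}=0$). Taking expectation to kill the martingale increments and grouping the $\alpha_{0}$-terms on one side and the $\delta$-perturbations on the other yields an identity of the form
\begin{equation*}
\mathbb{E}\bigl\langle \Delta\Phi^{\alpha_{0}}(X_{T}),\hat{X}_{T}\bigr\rangle - \mathbb{E}\!\int_{0}^{T}\!\bigl\langle \Delta\bar{F}^{\alpha_{0}}(s),\hat{u}_{s}\bigr\rangle ds = \delta R,
\end{equation*}
where $\hat{u}=(\hat{X},\hat{Y},\hat{Z})$ and $R$, by (H4), Cauchy--Schwarz, and the elementary bound $\mathbb{E}|\mathbb{E}^{\prime}[\hat{x}^{\prime}_{s}]|^{2}\le \mathbb{E}|\hat{x}_{s}|^{2}$ (and analogues for $\hat{y},\hat{z}$), is controlled by a constant multiple of $(\|\hat{u}\|_{\mathcal{M}^{2}}+\|\hat{X}_{T}\|_{L^{2}})(\|(\hat{x},\hat{y},\hat{z})\|_{\mathcal{M}^{2}}+\|\hat{x}_{T}\|_{L^{2}})$. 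The monotonicity (H5) applied to the convex combination $F^{\alpha_{0}}=\alpha_{0}F+(1-\alpha_{0})F^{0}$---where $F^{0}$ is the ``$\alpha=0$'' linear coefficient appearing in the preceding linear lemma, which one verifies directly to be monotone with constant $1$---produces uniform coercivity constants $\alpha_{0}C_{1}+1-\alpha_{0}$ and $\alpha_{0}\mu_{1}+1-\alpha_{0}$, both bounded below by $\min(C_{1},\mu_{1},1)>0$ independently of $\alpha_{0}$. Combining with Young's inequality to absorb small multiples of $\|\hat{u}\|_{\mathcal{M}^{2}}^{2}+\mathbb{E}|\hat{X}_{T}|^{2}$ into the left, and then using the standard a priori estimates for the forward McKean--Vlasov SDE and the mean-field BSDE to upgrade to the full $\mathcal{M}^{2}$-norm, one gets
\begin{equation*}
\|(\hat{X},\hat{Y},\hat{Z})\|_{\mathcal{M}^{2}}^{2} \le K\delta^{2}\|(\hat{x},\hat{y},\hat{z})\|_{\mathcal{M}^{2}}^{2},
\end{equation*}
with $K=K(C,C_{1},\mu_{1},T)$ independent of $\alpha_{0}$. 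Choosing $\delta_{0}\in(0,1)$ so that $K\delta_{0}^{2}<1$ makes $\mathcal{T}$ a strict contraction on $\mathcal{M}^{2}$, and Banach's fixed-point theorem delivers the unique fixed point, hence the desired solution.

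The main obstacle is the bookkeeping in the It\^o-formula step: one must separate cleanly the $\alpha_{0}$-part (whose uniform coercivity supplies the positive left-hand side) from the $\delta$-perturbation (whose magnitude is merely Lipschitz in the inputs), while simultaneously controlling the mean-field cross-terms $\mathbb{E}^{\prime}[\hat{x}^{\prime}_{s}]$, $\mathbb{E}^{\prime}[\hat{y}^{\prime}_{s}]$, $\mathbb{E}^{\prime}[\hat{z}^{\prime}_{s}]$ in the correct $L^{2}$-sense. The essential point is that both the coercivity constants for $F^{\alpha_{0}}$ and $\Phi^{\alpha_{0}}$ and the constant $K$ depend only on $c_{1},c_{2},T$ and not on $\alpha_{0}$; this uniformity is precisely what will permit this lemma to be iterated finitely many times in the proof of the main theorem, transporting solvability from $\alpha=0$ (handled by the preceding linear lemma) to $\alpha=1$.
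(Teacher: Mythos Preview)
Your argument is correct in spirit and very close to what the paper does, but the presentation differs in one structural way and contains one small gap.

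\textbf{Difference of approach.} The paper proves this lemma by a Picard iteration rather than by invoking Banach's fixed-point theorem directly: it defines $(X^{i+1},Y^{i+1},Z^{i+1})$ as the solution of the $\alpha_{0}$-system with the $\delta$-terms frozen at the previous iterate, applies It\^o's formula to $\hat X^{i+1}_{t}\hat Y^{i+1}_{t}$, and obtains a \emph{two-step} recursion of the form
\[
\mathbb{E}\!\int_{0}^{T}\!|\hat U^{i+1}_{s}|^{2}\,ds \;\le\; K\delta^{2}\Bigl(\mathbb{E}\!\int_{0}^{T}\!|\hat U^{i}_{s}|^{2}\,ds+\mathbb{E}\!\int_{0}^{T}\!|\hat U^{i-1}_{s}|^{2}\,ds\Bigr),
\]
because the forward estimate for $\mathbb{E}|\hat X^{i}_{T}|^{2}$ necessarily involves both $\hat U^{i}$ (through the $\alpha_{0}$-coefficients) and $\hat U^{i-1}$ (through the frozen $\delta$-coefficients). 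From this the Cauchy property follows. Your contraction-mapping reformulation is the same idea repackaged, and in fact is exactly the route the paper takes for the parallel lemma under (H6) (Lemma~11). Either approach yields the same $\delta_{0}$ depending only on $C,C_{1},\mu_{1},T$.

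\textbf{The gap.} Your map $\mathcal{T}$ takes inputs $(x,y,z)\in\mathcal{M}^{2}$, but one of the frozen $\delta$-terms is the terminal perturbation $\delta(\Phi(x_{T})-x_{T})$, which requires a well-defined $x_{T}\in L^{2}(\Omega,\mathcal{F}_{T})$. You implicitly acknowledge this when you write $\|\hat x_{T}\|_{L^{2}}$ in the intermediate bound for $R$, but then drop it in the final contraction estimate $\|(\hat X,\hat Y,\hat Z)\|_{\mathcal{M}^{2}}^{2}\le K\delta^{2}\|(\hat x,\hat y,\hat z)\|_{\mathcal{M}^{2}}^{2}$. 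There is no way to dominate $\|\hat x_{T}\|_{L^{2}}$ by $\|\hat x\|_{\mathcal{M}^{2}}$ for an arbitrary $\mathcal{M}^{2}$ process. The clean fix is to run the contraction on the product space $\mathcal{M}^{2}(0,T;\mathbb{R}\times\mathbb{R}\times\mathbb{R}^{d})\times L^{2}(\Omega,\mathcal{F}_{T};\mathbb{R})$, carrying $x_{T}$ as an additional coordinate and outputting $X_{T}$; the It\^o estimate you wrote already produces $\mathbb{E}|\hat X_{T}|^{2}$ on the left, so the contraction closes in this enlarged norm. This is precisely what the paper does in the proof of Lemma~11. Alternatively, the paper's Picard-iteration version of the argument sidesteps the issue because every iterate is the forward component of a genuine SDE and hence has a continuous version with a well-defined terminal value.
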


{\itshape Proof of Theorem 7.}

{\itshape Uniqueness.} \ If $U=(X,Y,Z)$ and $\bar{U}=(\bar{X},\bar{Y},\bar{Z}%
)$ are two adapted solutions of (\ref{coupled FBSDE}), we set
\begin{eqnarray*}
&&(\hat{X}^{\prime },\hat{Y}^{\prime },\hat{Z}^{\prime },\hat{X},\hat{Y},%
\hat{Z})=(X^{\prime }-\bar{X}^{\prime },Y^{\prime }-\bar{Y}^{\prime
},Z^{\prime }-\bar{Z}^{\prime },X-\bar{X},Y-\bar{Y},Z-\bar{Z}), \\
&&\hat{b}(t)=b(t,U^{\prime },U)-b(t,\bar{U}^{\prime },\bar{U}), \\
&&\hat{\sigma}(t)=\sigma (t,U^{\prime },U)-\sigma (t,\bar{U}^{\prime },\bar{U%
}), \\
&&\hat{f}(t)=f(t,U^{\prime },U)-f(t,\bar{U}^{\prime },\bar{U}).
\end{eqnarray*}%
From Assumption (A1), it follows that $\{\hat{X}_{t}\}$ and $\{\hat{Y}_{t}\}$
are continuous, and
\begin{equation*}
\mathbb{E}(\sup\limits_{t\in \lbrack 0,T]}|\hat{X}_{t}|^{2}+\sup\limits_{t%
\in \lbrack 0,T]}|\hat{Y}_{t}|^{2})<+\infty .
\end{equation*}%
Applying the Itô's formula to $\hat{X_{t}}\hat{Y_{t}}$ on $\left[ 0,T\right]
$, we have
\begin{eqnarray*}
\lefteqn{\mathbb{E}\lbrack \big(\Phi (X_{T})-\Phi (\bar{X}_{T})\big)\hat{X}%
_{T}]} \\
&=&\mathbb{E}\int_{0}^{T}\Big\{\mathbb{E}^{\prime }[\hat{b}(t)]\hat{Y}_{t}-%
\mathbb{E}^{\prime }[\hat{f}(t)]\hat{X}_{t}+\mathbb{E}^{\prime }[\hat{\sigma}%
(t)]\hat{Z}_{t}\Big\}dt. \\
&=&\mathbb{E}\int_{0}^{T}\Big\{<(-\mathbb{E}^{\prime }[\hat{f}(t)],\mathbb{E}%
^{\prime }[\hat{b}(t)],\mathbb{E}^{\prime }[\hat{\sigma}(t)]),(\hat{X}_{t},%
\hat{Y}_{t},\hat{Z}_{t})>\Big\}dt
\end{eqnarray*}%
By assumptions (A1) and (A2), we get then
\begin{equation*}
\mu _{2}|X_{T}-\bar{X}_{T}|^{2}\leq \mathbb{E}[(\Phi (X_{T})-\Phi (\bar{X}%
_{T}))\hat{X}_{T}]\leq -C_{1}\mathbb{E}\int_{0}^{T}|U-\bar{U}|^{2}dt.
\end{equation*}%
So, we get $U=\bar{U}$.

{\itshape Existence.} According to Lemma 8, we see immediately that, when $%
\alpha =0$, for any $(\gamma (\cdot ),\phi (\cdot ),\newline
\varphi (\cdot ))\in \mathcal{M}^{2}(0,T;\mathbb{R}\times \mathbb{R}%
^{d}\times \mathbb{R})$, $\xi \in L^{2}(\Omega ,\mathcal{F}_{T},P;\mathbb{R})
$, Eqs (\ref{Lem3.2(1)}) and (\ref{Lem3.2(2)}) have an adapted solution.
From Lemma 3.2, for any $(\gamma (\cdot ),\phi (\cdot ),\varphi (\cdot ))\in
\mathcal{M}^{2}(0,T;\mathbb{R}\times \mathbb{R}^{d}\times \mathbb{R})$, $\xi
\in L^{2}(\Omega ,\mathcal{F}_{T},P;\mathbb{R})$, we can solve Eqs (\ref%
{Lem3.2(1)}) and (\ref{Lem3.2(2)}) successively for the case $\alpha \in
\lbrack 0,\delta _{0}],[\delta _{0},2\delta _{0}],\cdots .$ When $\alpha =1$%
, for any $(\gamma (\cdot ),\phi (\cdot ),\varphi (\cdot ))\in \mathcal{M}%
^{2}(0,T;\mathbb{R}\times \mathbb{R}^{d}\times \mathbb{R})$, $\xi \in
L^{2}(\Omega ,\mathcal{F}_{T},P;\mathbb{R})$, the adapted solution of Eqs. (%
\ref{Lem3.2(1)}) and (\ref{Lem3.2(2)}) exists, then we deduce immediately
that the adapted solution of Eqs. (\ref{coupled FBSDE}) exists. \hfill $\Box
$

{\itshape Proof of Lemma 8}
\begin{proof}
We consider the following BSDEs:
\begin{eqnarray*}
\ddot{Y}_t=\xi+\int_t^T[-\mathbb{E}'[\ddot{Y}'_s]-\ddot{Y}_s-\varphi(s)+\gamma(s)]ds-\int_t^T(\mathbb{E}'[\ddot{Z}'_s]+2\ddot{Z}_s-\phi(s))dW_s.
\end{eqnarray*}
By Lemma 1, the above equation has a unique adapted solution
$(\ddot{Y},\ddot{Z})$.

Then we solve the following forward equation
\begin{eqnarray*}
X_t=x+\int_0^t\big(-\mathbb{E}'[X'_s]-X_s-\mathbb{E}'[\ddot{Y}'_s]-\ddot{Y}_s+\gamma(s)\big)ds+\int_0^t\big(-\mathbb{E}'[\ddot{Z}'_s]-\ddot{Z}_s+\phi(s)\big)dW_s,
\end{eqnarray*}
and set $ Y=\ddot{Y}+X,\ Z=\ddot{Z}$, we get
\begin{eqnarray*}
X_t&=&x+\int_0^t\big(-\mathbb{E}'[Y'_s]-Y_s+\gamma(s)\big)ds+\int_0^t\big(-\mathbb{E}'[Z'_s]-Z_s+\phi(s)\big)dW_s,\\
Y_t-X_t&=&\xi+\int_t^T[\mathbb{E}'[X'_s]-\mathbb{E}'[Y'_s]+X_s-Y_s-\varphi(s)+\gamma(s)]ds\\
&&-\int_t^T(\mathbb{E}'[Z'_s]+2Z_s-\phi(s))dW_s,\\
 X_T-X_t&=&\int_t^T\big(-\mathbb{E}'[Y'_s]-Y_s+\gamma(s)\big)ds+\int_t^T\big(-\mathbb{E}'[Z'_s]-Z_s+\phi(s)\big)dW_s.
\end{eqnarray*}
Then we have
\begin{equation*}
Y_t=\xi+X_T+\int_t^T[\mathbb{E}'[X'_s]+X_s-\varphi(s)]ds-\int_t^TZ_sdW_s.
\end{equation*}
So $(X, Y, Z)$ is a solution of Eqs.
 (\ref{lem 3.1 (1)}) and (\ref{lem 3.1 (2)}). Thus the existence is
 proved.

 As for uniqueness, it only has to use the method of the proof of
 uniqueness in Theorem 3.1 and we omit it.
\end{proof}{\itshape Proof of Lemma 9}
\begin{proof}
For simplicity, we set
\begin{eqnarray*}
U^{i}&=&(X^i,Y^i,Z^i),\\
\Lambda^0 &=& \big((X^0)',(Y^0)',(Z^0)',X^0,Y^0,Z^0\big)=0,\\
\Lambda^{i}&=&((X^i)',(Y^i)',(Z^i)',X^i,Y^i,Z^i),\\
\hat{\Lambda}^{i+1}&=&\Lambda^{i+1}-\Lambda^{i}=((\hat{X}^{i+1})',(\hat{Y}^{i+1})',(\hat{Z}^{i+1})',\hat{X}^{i+1},\hat{Y}^{i+1},\hat{Z}^{i+1}),\\
\hat{U}^{i+1}&=&U^{i+1}-U^{i}=(\hat{X}^{i+1},\hat{Y}^{i+1},\hat{Z}^{i+1}),
\end{eqnarray*}
 for all $i \in N^{+}$.

\noindent For any given $\alpha_0\in [0,1]$ and any $\delta>0$, we solve
iteratively the following equations:
\begin{eqnarray}
X^{i+1}_t&=&a+\int_0^t\Big(\bar{b}^{\alpha_0}(s,\Lambda^{i+1}_s)+\delta
\big[Y_s^i+\mathbb{E}'[(Y_s^{i})']+\bar{b}(s,\Lambda^{i}_s)\big]+\gamma(s)\Big)ds \nonumber\\
&&+\int_0^t\Big(\bar{\sigma}^{\alpha_0}(s,\Lambda^{i+1}_s)+\delta
\big[Z_s^i+\mathbb{E}'[(Z_s^{i})']+\bar{\sigma}(s,\Lambda^{i}_s)\big]+\phi(s)\Big)dW_s,
\label{Lem3.2(3)}
\end{eqnarray}
\begin{eqnarray}
Y^{i+1}_t&=&\big(\Phi^{\alpha_0}(X_T^{i+1})+\delta(\Phi(X_T^i)-X_T^i)+\xi\big)+\int_t^T\Big(\bar{f}^{\alpha_0}(s,\Lambda^{i+1}_s)\nonumber\\
&&+\delta
\big[\bar{f}(s,\Lambda^{i}_s)-\mathbb{E}'[(X_s^{i})']-X_s^i\big]-\varphi(s)\Big)ds
-\int_t^TZ^{i+1}_sdW_s. \label{Lem3.2(4)}
\end{eqnarray}
Applying the It$\hat{o}$ formula to
$\hat{X}_t^{i+1}\hat{Y}_t^{i+1}$, on and noticing that
$\mathbb{E}'[Y']=\mathbb{E}[Y]$, we have
\begin{eqnarray*}
\lefteqn{\mathbb{E}\big((\Phi^{\alpha_0}(X_T^{i+1})-\Phi^{\alpha_0}(X_T^{i}))\hat{X}_T^{i+1}\big)}\\
&=&-\delta\mathbb{E}\big[\big(\Phi(X_T^i)-\Phi(X_T^{i-1})-\hat{X}_T^i\big)\hat{X}_T^{i+1}\big]\\
&&+
\mathbb{E}\int^T_0\Bigg\{\hat{Y}_s^{i+1}[\bar{b}^{\alpha_0}(s,\Lambda^{i+1}_s)-\bar{b}^{\alpha_0}(s,\Lambda^{i}_s)]
-\hat{X}_s^{i+1}[\bar{f}^{\alpha_0}(s,\Lambda^{i+1}_s)-\bar{f}^{\alpha_0}(s,\Lambda^{i}_s)]\\
&&\ \
+\hat{Z}_s^{i+1}[\bar{\sigma}^{\alpha_0}(s,\Lambda^{i+1}_s)-\bar{\sigma}^{\alpha_0}(s,\Lambda^{i}_s)]\Bigg\}ds
+\delta\mathbb{E}\int_0^T\Bigg\{\hat{Y}_s^{i+1}\Big[\hat{Y}_s^i+\mathbb{E}[\hat{Y}_s^{i}]+\bar{b}(s,\Lambda^{i}_s)-\bar{b}(s,\Lambda^{i-1}_s)
\Big]\\
&&\ \
-\hat{X}_s^{i+1}\big[\bar{f}(s,\Lambda^{i}_s)-\bar{f}(s,\Lambda^{i-1}_s)-\hat{X}_s^i-\mathbb{E}[\hat{X}_s^{i}]
\big]+\hat{Z}_s^{i+1}\big[\hat{Z}_s^i+\mathbb{E}[\hat{Z}_s^{i}]+\bar{\sigma}(s,\Lambda^{i}_s)-\bar{\sigma}(s,\Lambda^{i-1}_s)
\big] \Bigg\}ds \\
&=&
-\delta\mathbb{E}[(\Phi(X_T^i)-\Phi(X_T^{i-1})-\hat{X}_T^i)\hat{X}_T^{i+1}]+
\mathbb{E}\int^T_0\left<\bar{F}^{\alpha_0}(s,\Lambda^{i+1}_s)-\bar{F}^{\alpha_0}(s,\Lambda^{i}_s),\hat{U}^{i+1}_s
\right>ds \\
&&+
\delta\mathbb{E}\int^T_0\left<\hat{U}^{i}_s+\mathbb{E}[\hat{U}^{i}_s]+\bar{F}(s,\Lambda^{i}_s)-\bar{F}(s,\Lambda^{i-1}_s),\hat{U}^{i+1}_s
\right>ds.
\end{eqnarray*}
Using the definition of $\bar{F}(t,\Lambda)$, we have the following
inequality:
\begin{eqnarray}
 \bar{F}(s,\Lambda^{i}_s)-\bar{F}(s,\Lambda^{i-1}_s)&=&\mathbb{E}'\big[F(s,\Lambda^{i}_s)-F(s,\Lambda^{i-1}_s)\big]
 \nonumber
 \\
 &\leq& C \mathbb{E}'|\Lambda^{i}_s-\Lambda^{i-1}_s| \nonumber\\
 &=&  C\big(\mathbb{E}[|\hat{U}^{i}|]+|\hat{U}^{i}|\big). \label{Lem3.2(5)}
\end{eqnarray}

\noindent From assumptions (A1),\ (A2), Eq (\ref{Lem3.2(5)}) and the notation
$$
\bar{F}^{\alpha_0}(s,\Lambda^{i+1}_s)=
\alpha_0\bar{F}(s,\Lambda^{i+1}_s)-(1-\alpha_0)\Big(\mathbb{E}[(\hat{U}_t^{i+1})']+\hat{U}_t^{i+1}\Big),$$
for any $\alpha_0\in [0,1]$, we deduce easily that
\begin{eqnarray*}
&&(\mu_1\alpha_0+1-\alpha_0)\mathbb{E}[\hat{X}_T^{i+1}]^2+(C_1\alpha_0+2-2\alpha_0)\mathbb{E}\int^T_0|\hat{U}_t^{i+1}|^2dt \\
& \leq & \delta\mathbb{E}[\hat{X}_T^i\hat{X}_T^{i+1}]
-\delta\mathbb{E}[(\Phi(X_T^i)-\Phi(X_T^{i-1}))\hat{X}_T^{i+1}]
+\delta\mathbb{E}\int^T_0<\hat{U}_t^{i},\hat{U}_t^{i+1}>dt\\
&&+\delta\mathbb{E}\int^T_0<\mathbb{E}[\hat{U}_t^{i}],\hat{U}_t^{i+1}>dt
+\delta\mathbb{E}\int^T_0<\bar{F}(t,\Lambda^{i}_t)-\bar{F}(t,\Lambda^{i-1}_t),\hat{U}_t^{i+1}>dt
\\
&\leq& 2\delta(1+C)\left(\mathbb{E}|\hat{X}_T^i||\hat{X}_T^{i+1}|+
\mathbb{E}\int^T_0|\hat{U}_t^{i}||\hat{U}_t^{i+1}|dt\right).
\end{eqnarray*}
In virtue of
$$\min\ (\mu_1\alpha_0+1-\alpha_0,C_1\alpha_0+2-2\alpha_0) \geq \mu_2=
\min\ (1,\mu_1,C_1)>0,$$ the above inequality yields
\begin{eqnarray*}
\mathbb{E}[\hat{X}_T^{i+1}]^2+\mathbb{E}\int^T_0|\hat{U}_t^{i+1}|^2dt
& \leq & \frac{2\delta(1+C)}{\mu_2}
\mathbb{E}\left(|\hat{X}_T^i||\hat{X}_T^{i+1}|+
\int^T_0|\hat{U}_t^{i}||\hat{U}_t^{i+1}|dt\right).
\end{eqnarray*}

\noindent For $\varepsilon=\frac{\mu_2}{4\delta(1+C)}>0$, with the help of $ab
\leq \frac{a^2}{4\varepsilon}+\varepsilon b^2,$ we can derive
\begin{equation}
\mathbb{E}[\hat{X}_T^{i+1}]^2+\mathbb{E}\int^T_0|\hat{U}_t^{i+1}|^2dt
 \leq  \left(\frac{2\delta(1+C)}{\mu_2}\right)^2
\mathbb{E}\left(|\hat{X}_T^{i}|^2+
\int^T_0|\hat{U}_t^{i}|^2dt\right). \label{lem3.2 (7)}
\end{equation}
Note that there exists a constant $C'>0$ which depends only on $C$
and $T$, such that
\begin{equation}
\mathbb{E}|\hat{X}_T^i|^2 \leq
C'\mathbb{E}\int_0^T\Big(|\hat{U}^{i}_s|^2+|\hat{U}^{i-1}_s|^2\Big)ds,\
\ \ \ \  \forall i \geq 1. \label{Lem3.2(8)}
\end{equation}
Indeed, for $i \geq 1,$
\begin{eqnarray*}
\mathbb{E}|\hat{X}_T^i|^2 &=&
2\mathbb{E}\int_0^T|\hat{X}_s^i|\Big\{\bar{b}^{\alpha_0}(s,\Lambda^{i}_s)-\bar{b}^{\alpha_0}(s,\Lambda^{i-1}_s)
+\delta\Big(\hat{Y}_s^{i-1}+\mathbb{E}'[(\hat{Y}_s^{i-1})']+\bar{b}(s,\Lambda^{i-1}_s)-\bar{b}(s,\Lambda^{i-2}_s)\Big)
\Big\}ds\\
&&+
\mathbb{E}\int_0^T\Big\{\bar{\sigma}^{\alpha_0}(s,\Lambda^{i}_s)-\bar{\sigma}^{\alpha_0}(s,\Lambda^{i-1}_s)
+\delta\Big(\hat{Z}_s^{i-1}+\mathbb{E}'[(\hat{Z}_s^{i-1})']+\bar{\sigma}(s,\Lambda^{i-1}_s)-\bar{\sigma}(s,\Lambda^{i-2}_s)\Big)
\Big\}^2ds \\
 &\leq &
\mathbb{E}\int_0^T|\hat{X}_s^i|^2dt+2\mathbb{E}\int_0^T\Big\{\big(\bar{b}^{\alpha_0}(s,\Lambda^{i}_s)-\bar{b}^{\alpha_0}(s,\Lambda^{i-1}_s)\big)^2+
\big(\bar{\sigma}^{\alpha_0}(s,\Lambda^{i}_s)-\bar{\sigma}^{\alpha_0}(s,\Lambda^{i-1}_s)\big)^2
\Big\}ds
\\
&&+2\delta^2\mathbb{E}\int_0^T\Big\{
\hat{Y}_s^{i-1}+\mathbb{E}'[(\hat{Y}_s^{i-1})']+\bar{b}(s,\Lambda^{i-1}_s)-\bar{b}(s,\Lambda^{i-2}_s)
\Big\}^2ds\\
&&+
2\delta^2\mathbb{E}\int_0^T\Big\{\hat{Z}_s^{i-1}+\mathbb{E}'[(\hat{Z}_s^{i-1})']+\bar{\sigma}(s,\Lambda^{i-1}_s)-\bar{\sigma}(s,\Lambda^{i-2}_s)
\Big\}^2ds
\\
 &\leq &
C\mathbb{E}\int_0^T\Big\{|\hat{U}^{i}_s|^2+|\hat{U}^{i-1}_s|^2
+\mathbb{E}\big[|\hat{Y}^{i-1}_s|^2+|\hat{Z}^{i-1}_s|^2\big] +
\mathbb{E}(|\hat{U}^{i}_s|^2)+
\mathbb{E}(|\hat{U}^{i-1}_s|^2)\Big\}ds.
\end{eqnarray*}
Similar to (\ref{Lem3.2(5)}), in the last inequality, we use the
fact that
\begin{eqnarray*}
|\bar{b}(s,\Lambda^{i-1}_s)-\bar{b}(s,\Lambda^{i-2}_s)|^2 & \leq &
\mathbb{E}'\Big(\big[b(s,\Lambda^{i-1}_s)-b(s,\Lambda^{i-2}_s)\big]^2\Big)\\
&\leq& c
\Big(\mathbb{E}(|\hat{U}^{i-1}_s|^2)+|\hat{U}^{i-1}_s|^2\Big),
\end{eqnarray*}
as well as
\begin{eqnarray*}
\bar{b}^{\alpha_0}(s,\Lambda^{i}_s)-\bar{b}^{\alpha_0}(s,\Lambda^{i-1}_s)
&\leq& \alpha_0
C|\Lambda^{i}_s-\Lambda^{i-1}_s|+(1-\alpha_0)(-\mathbb{E}[\hat{Y}^i]-\hat{Y}^i),
\end{eqnarray*}
where $c$ is a constant which depends on $C$. One can show that
$\bar{\sigma}(s,\Lambda^{i-1}_s)-\bar{\sigma}(s,\Lambda^{i-2}_s)$
and
$\bar{\sigma}^{\alpha_0}(s,\Lambda^{i}_s)-\bar{\sigma}^{\alpha_0}(s,\Lambda^{i-1}_s)$
have the similar results. By a standard method of estimation, the
desired result (\ref{Lem3.2(8)}) can be derived easily.

From (\ref{lem3.2 (7)})  and (\ref{Lem3.2(8)}) , we know that there
exists a constant $K >0 $ which depends only on $C,\ C_1,\ \mu_1$
and $T$, such that
\begin{equation*}
\mathbb{E}\int_0^T\Big|\hat{U}^{i+1}_s|^2ds \leq
K\delta^2\Bigg(\mathbb{E}\int_0^T\Big\{|\hat{U}^{i}_s|^2+|\hat{U}^{i-1}_s|^2\Big\}ds\Bigg).
\end{equation*}
Hence there exists a $\delta_0\in (0,1)$, which depends only on $C,\
C_1,\ \mu_1$ and $T$, such that when $0<\delta \leq \delta_0$,
\begin{equation*}
\mathbb{E}\int_0^T|\hat{U}^{i+1}_s|^2ds \leq
\frac{1}{4}\mathbb{E}\int_0^T|\hat{U}^{i}_s|^2ds+\frac{1}{8}\mathbb{E}\int_0^T|\hat{U}^{i-1}_s|^2ds.
\end{equation*}
That is
\begin{equation*}
\mathbb{E}\int_0^T\Big(|\hat{U}^{i+1}_s|^2+\frac{1}{4}|\hat{U}^{i}_s|^2\Big)ds
\leq
\frac{1}{2}\mathbb{E}\int_0^T\Big(|\hat{U}^{i}_s|^2+\frac{1}{4}|\hat{U}^{i-1}_s|^2\Big)ds.
\end{equation*}
Repeat the above inequality as many times as you desire, there holds
\begin{equation*}
\mathbb{E}\int_0^T\Big(|\hat{U}^{i+1}_s|^2+\frac{1}{4}|\hat{U}^{i}_s|^2\Big)ds
\leq
\left(\frac{1}{2}\right)^{i-1}\mathbb{E}\int_0^T\Big(|\hat{U}^{2}_s|^2+\frac{1}{4}|\hat{U}^{1}_s|^2\Big)ds,
\ \ i\geq 1.
\end{equation*}

\noindent It turns out that $U^i$ is a Cauchy sequence in $\mathcal
{M}^2(0,T;\mathbb{R}\times \mathbb{R}\times \mathbb{R}^{d})$ and its
limit is denoted by $U=(X,Y,Z)$. Passing to the limit in
Eqs.(\ref{Lem3.2(3)}) and (\ref{Lem3.2(4)}), we see that, when
$0<\delta \leq \delta_0, U=(X,Y,Z)$ solves Eqs.(\ref{Lem3.2(1)}) and
(\ref{Lem3.2(2)}) for $\alpha=\alpha_0+\delta$. The proof is
completed.
\end{proof}

The condition (A2) can be replaced by the following condition.

\begin{enumerate}
\item[\textbf{(H6)}] For $\Theta _{i}=(\tilde{x}_{i},\tilde{y}_{i},\tilde{z}%
_{i},x_{i},y_{i},z_{i})\in \mathbb{R}\times \mathbb{R}\times \mathbb{R}%
^{d}\times \mathbb{R}\times \mathbb{R}\times \mathbb{R}^{d}$, let $%
u_{i}=(x_{i},y_{i},z_{i})\in \mathbb{R}\times \mathbb{R}\times \mathbb{R}%
^{d} $, then $\Theta _{i}=(\tilde{u}_{i},u_{i})\ (i=1,2)$. We assume that
\begin{eqnarray*}
\mathbb{E}<F(t,\Theta _{1})-F(t,\Theta _{2}),u_{1}-u_{2}> &\geq &C_{1}%
\mathbb{E}(|u_{1}-u_{2}|^{2}),\ \ P-a.s.,a.e.\ t\in \mathbb{R}^{+}, \\
<\Phi (x_{1})-\Phi (x_{2}),x_{1}-x_{2}> &\leq &-\mu _{1}|x_{1}-x_{2}|^{2},\
\ \ \ P-a.s.,\forall \ (x_{1},x_{2})\in \mathbb{R}\times \mathbb{R},
\end{eqnarray*}%
where $C_{1}$ and $\mu _{1}$ are given positive constants.
\end{enumerate}

We have another parallel existence and uniqueness theorem for mean-field
FBSDEs.

\begin{theorem}
Let (H4) and (H6) hold. Then there exists a unique adapted solution (X,Y,Z)
of mean-field FBSDEs (\ref{coupled FBSDE}).
\end{theorem}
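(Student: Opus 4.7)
The plan is to reduce Theorem 11 to Theorem 7 by means of a simple sign-change transformation, rather than repeating the whole iteration argument based on Lemmas 8 and 9. The observation is that (H6) is obtained from (H5) by flipping the inequality (and sign of $\mu_1$) on both the generator and the terminal condition, and this sign flip can be absorbed into the unknowns $Y, Z$.

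Concretely, I would introduce the transformed processes $\hat{X}_t = X_t$, $\hat{Y}_t = -Y_t$, $\hat{Z}_t = -Z_t$, together with auxiliary coefficients
\begin{equation*}
\tilde{b}(t,\hat{x}',\hat{y}',\hat{z}',\hat{x},\hat{y},\hat{z}) = b(t,\hat{x}',-\hat{y}',-\hat{z}',\hat{x},-\hat{y},-\hat{z}),
\end{equation*}
\begin{equation*}
\tilde{\sigma}(t,\hat{x}',\hat{y}',\hat{z}',\hat{x},\hat{y},\hat{z}) = \sigma(t,\hat{x}',-\hat{y}',-\hat{z}',\hat{x},-\hat{y},-\hat{z}),
\end{equation*}
\begin{equation*}
\tilde{f}(t,\hat{x}',\hat{y}',\hat{z}',\hat{x},\hat{y},\hat{z}) = -f(t,\hat{x}',-\hat{y}',-\hat{z}',\hat{x},-\hat{y},-\hat{z}), \quad \tilde{\Phi}(x) = -\Phi(x).
\end{equation*}
A direct substitution into the fully coupled mean-field FBSDE (interchanging $-\int Z\,dW$ with $-\int \hat{Z}\,dW$ because of the antisymmetry of $Z\mapsto-\hat Z$ under the It\^o integral) shows that $(X,Y,Z)$ solves (\ref{coupled FBSDE}) with data $(b,\sigma,f,\Phi)$ if and only if $(\hat X,\hat Y,\hat Z)$ solves the same type of equation with data $(\tilde b,\tilde\sigma,\tilde f,\tilde\Phi)$.

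The key verification is that the transformed data satisfy (H4) and (H5). The Lipschitz condition (H4) is clearly preserved under composition with a sign flip. For the monotonicity, writing $\tilde F = (-\tilde f,\tilde b,\tilde\sigma)$, $\hat u = (\hat x,\hat y,\hat z)$ and $u = (\hat x,-\hat y,-\hat z)$, one obtains by a short bookkeeping computation the identities
\begin{equation*}
\langle \tilde F(t,\hat\Theta_1)-\tilde F(t,\hat\Theta_2),\hat u_1-\hat u_2\rangle = -\langle F(t,\Theta_1)-F(t,\Theta_2),u_1-u_2\rangle,
\end{equation*}
\begin{equation*}
\langle \tilde\Phi(x_1)-\tilde\Phi(x_2),x_1-x_2\rangle = -\langle \Phi(x_1)-\Phi(x_2),x_1-x_2\rangle,
\end{equation*}
together with $|\hat u_1-\hat u_2|^2 = |u_1-u_2|^2$. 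Hence (H6) for $(b,\sigma,f,\Phi)$ translates line-for-line into (H5) for $(\tilde b,\tilde\sigma,\tilde f,\tilde\Phi)$, with the same constants $C_1,\mu_1$.

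Applying Theorem 7 to the transformed system yields a unique adapted triple $(\hat X,\hat Y,\hat Z)\in\mathcal M^2_{\mathbb F}(0,T;\mathbb R\times\mathbb R\times\mathbb R^d)$, and setting $(X,Y,Z) = (\hat X,-\hat Y,-\hat Z)$ gives the unique adapted solution of (\ref{coupled FBSDE}) under (H4)--(H6). The only delicate point, and the one I would handle carefully, is the sign bookkeeping in the inner product defining the monotonicity: because $F$ depends on the full vector $\Theta$ but the inner product is taken only against $u = (x,y,z)$, and because the forward component $x$ is unchanged while $y,z$ flip sign, the minus signs only cancel correctly once one writes out all three components of $F$ explicitly. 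Once that identity is in hand, the rest of the reduction is mechanical.
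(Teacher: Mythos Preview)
Your reduction is correct. The involution $(X,Y,Z)\mapsto(X,-Y,-Z)$ together with $\tilde f=-f(\cdot,-\hat y',-\hat z',\cdot,-\hat y,-\hat z)$, $\tilde\Phi=-\Phi$ does turn the system under (H6) into one of the same form satisfying (H5), and your sign identity
\[
\langle \tilde F(t,\hat\Theta_1)-\tilde F(t,\hat\Theta_2),\hat u_1-\hat u_2\rangle=-\langle F(t,\Theta_1)-F(t,\Theta_2),u_1-u_2\rangle
\]
checks out componentwise, since the first component of $\tilde F$ becomes $+f$ (paired with the unchanged $x$-difference) while the second and third components are unchanged but are paired with sign-flipped $y,z$-differences. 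Theorem 7 then yields existence and uniqueness for the transformed system, and the bijection $(X,Y,Z)\leftrightarrow(\hat X,\hat Y,\hat Z)$ transfers both to the original.

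The paper takes a genuinely different route: instead of reducing to Theorem 7, it re-runs a continuation argument directly under (H6), but with a \emph{different} one-parameter family. Rather than interpolating with the fully symmetric linear system of Lemma 8, the paper perturbs only the backward part and the terminal condition (adding $-(1-\alpha)c_2 X$ to the driver and $-(1-\alpha)X_T$ to $Y_T$), so that at $\alpha=0$ the forward SDE decouples and one can solve forward then backward. The a priori step (Lemma 11) is then a contraction-mapping argument in $(u,x_T)$ rather than the iterative Cauchy-sequence estimate of Lemma 9. Your approach is shorter and more elegant, exploiting the symmetry between (H5) and (H6); the paper's approach is self-contained under (H6) and exhibits an alternative continuation family, which can be useful when the forward and backward monotonicity requirements are genuinely asymmetric (as in the Peng--Wu variants) and no simple involution is available.
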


The method to prove the existence is similar to Theorem 7. We now consider
the following (\ref{FBSDE (A3)}) for each $\alpha \in \lbrack 0,1]:$
\begin{eqnarray}
&&\ \ dX_{s}^{\alpha }=\big[\alpha \bar{b}(s,\Lambda _{s})+\gamma (s)\big]ds+%
\big[\alpha \bar{\sigma}(s,\Lambda _{s})+\phi (s)\big]dW_{s},  \notag \\
&&-dY_{s}=\big[-(1-\alpha )c_{2}X_{s}+\alpha \bar{f}(s,\Lambda _{s})+\varphi
(s)\big]ds-Z_{s}dW_{s},  \label{FBSDE (A3)} \\
&&\ \ X_{0}^{\alpha }=a,\ Y_{T}^{\alpha }=\alpha \Phi (X_{T})-(1-\alpha
)X_{T}+\xi ,  \notag
\end{eqnarray}%
where $\gamma ,\ \phi $ and $\varphi $ are given processes in $\mathcal{M}%
^{2}(0,T)$ with values in $\mathbb{R},\ \mathbb{R}^{d}$, and $\mathbb{R}$,
resp., $\xi \in L^{2}(\Omega ,\mathcal{F}_{T},P)$. Clearly the existence of (%
\ref{FBSDE (A3)}) for $\alpha =1$ implies the existence of FBSDEs (\ref%
{coupled FBSDE}). From the existence and uniqueness of SDEs and BSDEs, when $%
\alpha =0$, the equation (\ref{FBSDE (A3)}) has a unique solution.

In order to obtain this conclusion, we also need the following lemma. This
lemma gives a priori estimate for the existence interval of (\ref{FBSDE (A3)}%
) with respect to $\alpha \in [0,1].$

\begin{lemma}
We assume (H4) and (H6). Then there exists a positive constant $\delta _{0}$
such that if, a prior, for a $\alpha _{0}\in \lbrack 0,1)$ there exists a
triple of solution $(X^{\alpha _{0}},Y^{\alpha _{0}},Z^{\alpha _{0}})$ of (%
\ref{FBSDE (A3)}), then for each $\delta \in \lbrack 0,\delta _{0}]$ there
exists a solution $(X^{\alpha _{0}+\delta },Y^{\alpha _{0}+\delta
},Z^{\alpha _{0}+\delta })$ of FBSDEs (\ref{FBSDE (A3)}) for $\alpha =\alpha
_{0}+\delta $.
\end{lemma}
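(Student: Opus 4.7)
The plan is to follow the Picard-iteration scheme of Lemma 9, adapted to the structure of (\ref{FBSDE (A3)}) and the reversed-sign monotonicity (H6). Reading the solvability hypothesis at $\alpha_{0}$ as valid for arbitrary admissible data $(\gamma,\phi,\varphi,\xi)$, I set $U^{0}:=0$ and, for each $\delta>0$, iteratively construct $(X^{i+1},Y^{i+1},Z^{i+1})$ as the solution (which exists by hypothesis) of (\ref{FBSDE (A3)}) at $\alpha=\alpha_{0}$ with data shifted by $\delta$-perturbations built from the previous iterate: replace $\gamma$ by $\delta\bar{b}(\cdot,\Lambda^{i})+\gamma$, $\phi$ by $\delta\bar{\sigma}(\cdot,\Lambda^{i})+\phi$, $\varphi$ by $\delta[\bar{f}(\cdot,\Lambda^{i})+c_{2}X^{i}]+\varphi$, and $\xi$ by $\delta[\Phi(X^{i}_{T})+X^{i}_{T}]+\xi$. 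A direct algebraic check shows that any fixed point of this iteration solves (\ref{FBSDE (A3)}) at $\alpha=\alpha_{0}+\delta$.

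Next, I apply It\^{o}'s formula to $\hat{X}^{i+1}_{t}\hat{Y}^{i+1}_{t}$, where $\hat{U}^{i+1}:=U^{i+1}-U^{i}$. Since $\hat{X}^{i+1}_{0}=0$, taking expectations and using the terminal expansion $\hat{Y}^{i+1}_{T}=\alpha_{0}[\Phi(X^{i+1}_{T})-\Phi(X^{i}_{T})]-(1-\alpha_{0})\hat{X}^{i+1}_{T}+\delta[\Phi(X^{i}_{T})-\Phi(X^{i-1}_{T})+\hat{X}^{i}_{T}]$ together with the $\Phi$- and $\bar{F}$-monotonicity in (H6) and the structural nonnegative contribution $(1-\alpha_{0})c_{2}\mathbb{E}\int_{0}^{T}|\hat{X}^{i+1}|^{2}ds$ from the backward drift, I expect an a priori estimate of the form
\begin{equation*}
\bigl[\alpha_{0}\mu_{1}+1-\alpha_{0}\bigr]\mathbb{E}|\hat{X}^{i+1}_{T}|^{2}+\alpha_{0}C_{1}\mathbb{E}\!\int_{0}^{T}\!|\hat{U}^{i+1}|^{2}ds+(1-\alpha_{0})c_{2}\mathbb{E}\!\int_{0}^{T}\!|\hat{X}^{i+1}|^{2}ds\le K\delta\,\mathbb{E}\Bigl(|\hat{X}^{i}_{T}||\hat{X}^{i+1}_{T}|+\!\int_{0}^{T}\!|\hat{U}^{i}||\hat{U}^{i+1}|ds\Bigr),
\end{equation*}
for some $K$ depending only on $C,C_{1},\mu_{1},c_{2},T$. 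Applying Young's inequality, standard BSDE a priori estimates (to transfer control of $(\hat{X},\hat{X}_{T})$ into control of $(\hat{Y},\hat{Z})$), and the analogue of (\ref{Lem3.2(8)}) should close this into a two-step geometric contraction $\mathbb{E}\int_{0}^{T}|\hat{U}^{i+1}|^{2}ds\le K'\delta^{2}\mathbb{E}\int_{0}^{T}(|\hat{U}^{i}|^{2}+|\hat{U}^{i-1}|^{2})ds$. Choosing $\delta_{0}\in(0,1)$ small enough then forces $\{U^{i}\}$ to be Cauchy in $\mathcal{M}^{2}(0,T;\mathbb{R}\times\mathbb{R}\times\mathbb{R}^{d})$, and the limit solves (\ref{FBSDE (A3)}) at $\alpha=\alpha_{0}+\delta$.

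The main obstacle is closing the full $|\hat{U}|^{2}$ contraction uniformly in $\alpha_{0}\in[0,1)$: unlike Lemma 9, whose convex combination distributed a $(1-\alpha_{0})$-regularisation across all three components of $\hat{U}$, here the structural $(1-\alpha_{0})$-terms control only the $\hat{X}$-component, while (H6)'s $\bar{F}$-monotonicity contributes $\alpha_{0}C_{1}$ to all three components and vanishes at $\alpha_{0}=0$. Bridging this gap via classical BSDE estimates applied to the backward part of the iteration (so that the already-controlled $|\hat{X}|^{2}+|\hat{X}_{T}|^{2}$ forces control of $|\hat{Y}|^{2}+|\hat{Z}|^{2}$) is the technical heart of the proof; once done, the continuation and contraction steps mirror Lemma 9 essentially verbatim.
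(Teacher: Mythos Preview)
Your proposal is correct in substance and identifies exactly the right technical pivot: the structural $(1-\alpha_{0})$-regularisation in (\ref{FBSDE (A3)}) acts only on the $\hat{X}$-component, so closing the full $|\hat{U}|^{2}$ estimate uniformly in $\alpha_{0}$ requires a separate BSDE a priori bound to recover control of $(\hat{Y},\hat{Z})$ from $\hat{X}$ and $\hat{X}_{T}$. This is precisely what the paper does.

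The route differs, however. You adapt the Picard-iteration scheme of Lemma~9, comparing \emph{successive} iterates $\hat{U}^{i+1}=U^{i+1}-U^{i}$; this forces you through a two-step recursion $\mathbb{E}\int|\hat{U}^{i+1}|^{2}\le K'\delta^{2}\mathbb{E}\int(|\hat{U}^{i}|^{2}+|\hat{U}^{i-1}|^{2})$ and the accompanying telescoping trick. The paper instead packages the same iteration as a single map $I_{\alpha_{0}+\delta}:(u,x_{T})\mapsto(U,X_{T})$ on $\mathcal{M}^{2}(0,T;\mathbb{R}\times\mathbb{R}\times\mathbb{R}^{d})\times L^{2}(\Omega,\mathcal{F}_{T},P)$ and compares two \emph{arbitrary} inputs $(u,x_{T})$ and $(\bar{u},\bar{x}_{T})$, obtaining directly $\mathbb{E}|\hat{X}_{T}|^{2}+\mathbb{E}\int|\hat{U}|^{2}\le\tfrac{1}{2}\bigl(\mathbb{E}|\hat{x}_{T}|^{2}+\mathbb{E}\int|\hat{u}|^{2}\bigr)$ for $\delta\le\delta_{0}=\tfrac{1}{2K}$, so that Banach's fixed-point theorem applies immediately. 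The contraction-mapping formulation is shorter and avoids the two-step bookkeeping; your iteration is the same map iterated from $U^{0}=0$, so the approaches are equivalent, but the paper's framing (closer to Peng--Wu than to Hu--Peng's original Lemma~9) is the cleaner one here and is worth knowing.
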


\begin{proof}
We use the notations
\begin{eqnarray*}
&&\ u=(x,y,z),  \ \ \ \ \ \ \ \ \ \ \ \ \ \ \ \ \ \ \ \ \bar{u}=(\bar{x},\bar{y},\bar{z}),\\
&&U=(X,Y,Z),  \ \ \ \ \ \ \ \ \  \ \ \ \ \ \ \ \ \ \bar{U}=(\bar{X},\bar{Y},\bar{Z}),\\
&& \Theta=(x',y',z',x,y,z),\ \ \ \ \ \ \ \ \
\bar{\Theta}=(\bar{x}',\bar{y}',\bar{z}',\bar{x},\bar{y},\bar{z}),\\
&&\Lambda=(X',Y',Z',X,Y,Z),
\ \ \ \ \ \bar{\Lambda}=(\bar{X}',\bar{Y}',\bar{Z}',\bar{X},\bar{Y},\bar{Z}),\\
&&\hat{\Theta}=\Theta-\bar{\Theta},\ \ \ \ \ \ \ \ \ \ \ \ \ \ \ \ \
\ \ \ \ \ \hat{\Lambda}=\Lambda-\bar{\Lambda},\\ &&
\hat{u}=(\hat{x},\hat{y},\hat{z})=(x-\bar{x},y-\bar{y},z-\bar{z}),\\
&&
\hat{U}=(\hat{X},\hat{Y},\hat{Z})=(X-\bar{X},Y-\bar{Y},Z-\bar{Z}).
\end{eqnarray*}

\noindent Since for $(\gamma,\phi,\varphi) \in \mathcal {M}^2(0,T;
\mathbb{R}\times \mathbb{R}^d \times \mathbb{R})$, $\xi \in
L^2(\Omega, \mathcal{F}_T,P)$, $\alpha_0 \in [0,1)$ there exists a
unique solution of (\ref{FBSDE (A3)}), thus, for each $x_T \in
L^2(\Omega, \mathcal{F}_T,P)$ and a triple $u_s=(x_s,y_s,z_s) \in
\mathcal {M}^2(0,T; \mathbb{R}\times \mathbb{R}^d \times
\mathbb{R})$ there exists a unique triple $U_s=(X_s,Y_s,Z_s) \in
\mathcal {M}^2(0,T; \mathbb{R}\times \mathbb{R}^d \times
\mathbb{R})$ satisfying the following FBSDE:
\begin{eqnarray}
&&\ \ dX_s=\big[\alpha_0\bar{b}(s,\Lambda_s)+\delta
\bar{b}(s,\Theta_s)+\gamma(s)\big]ds+\big[\alpha_0\bar{\sigma}(s,\Lambda_s)+\delta
\bar{\sigma}(s,\Theta_s)+\phi(s)\big]dW_s,\nonumber
\\&&-dY_s=\big[-(1-\alpha_0)C_1X_s+\alpha_0\bar{f}(s,\Lambda_s)+\delta\big(C_1x_s+\bar{f}(s,\Theta_s)\big)+\varphi(s)\big]ds-Z_sdW_s, \label{FBSDE (A3) mapping}\\
&&\ \ X_0=a,\
Y_T=\alpha_0\Phi(X_T)+(\alpha_0-1)X_T+\delta(\Phi(x_T)+x_T)+\xi.
\nonumber
\end{eqnarray}
We now proceed to prove that, if $\delta$ is sufficiently small, the
mapping defined by
\begin{eqnarray*}
&&\ \ \ \ \ \ \ \ \ \ \ I_{\alpha_0+\delta}(u\times x_T)=U\times X_T:\\
&&\mathcal{M}^2(0,T; \mathbb{R}\times \mathbb{R}^d \times
\mathbb{R}) \times L^2(\Omega, \mathcal{F}_T,P)\rightarrow \mathcal
{M}^2(0,T; \mathbb{R}\times \mathbb{R}^d \times \mathbb{R}) \times
L^2(\Omega, \mathcal{F}_T,P)
\end{eqnarray*}
is a contraction.

Let $\bar{u}=(\bar{x},\bar{y},\bar{z}) \in \mathcal {M}^2(0,T;
\mathbb{R}\times \mathbb{R}^d \times \mathbb{R})$, and let $\bar{U}
\times \bar{X}_T = I_{\alpha_0+\delta}(\bar{u}\times \bar{x}_T).$

Using It\textrm{$\hat{o}$}'s formula to $\hat{X}_s\hat{Y}_s$ yields
\begin{eqnarray*}
&&\alpha_0\mathbb{E}<\Phi(X_T)-\Phi(\bar{X}_T),\hat{X}_T>+(\alpha_0-1)\mathbb{E}|\hat{X}_T|^2+\delta\mathbb{E}<\Phi(x_T)-\Phi(\bar{x}_T)+\hat{x}_T,\hat{X}_T>\\
&=&-\mathbb{E}\int_0^T\hat{X}_s\Big[\alpha_0[\bar{f}(s,\Lambda_s)-\bar{f}(s,\bar{\Lambda}_s)]+\delta[\bar{f}(s,\Theta_s)-\bar{f}(s,\bar{\Theta}_s)]
-(1-\alpha_0)C_1\hat{X}_s+\delta
C_1\hat{x}_s\Big]ds\\
&&+\mathbb{E}\int_0^T\hat{Y}_s\Big[\alpha_0[\bar{b}(s,\Lambda_s)-\bar{b}(s,\bar{\Lambda}_s)]+\delta[\bar{b}(s,\Theta_s)-\bar{b}(s,\bar{\Theta}_s)]\Big]ds\\
&&+\mathbb{E}\int_0^T\hat{Z}_s\Big[\alpha_0[\bar{\sigma}(s,\Lambda_s)-\bar{\sigma}(s,\bar{\Lambda}_s)]+\delta[\bar{\sigma}(s,\Theta_s)-\bar{\sigma}(s,\bar{\Theta}_s)]\Big]ds\\
&=&\alpha_0\mathbb{E}\int_0^T<\bar{F}(s,\Lambda_s)-\bar{F}(s,\bar{\Lambda}_s),\hat{U}_s>ds+(1-\alpha_0)C_1\mathbb{E}\int_0^T|\hat{X}_s|^2ds-\delta
C_1\mathbb{E}\int_0^T<\hat{X}_s,
\hat{x}_s>ds \\
&&+\delta
\mathbb{E}\int_0^T<\bar{F}(s,\Theta_s)-\bar{F}(s,\bar{\Theta}_s),\hat{U}_s>
  ds.
\end{eqnarray*}

\noindent From (H4) and (H6), we can get
\begin{eqnarray*}
&& (\mu_1\alpha_0+(1-\alpha_0))\mathbb{E}[|\hat{X}_T|^2]+C_1\mathbb{E}\int_0^T|\hat{X}_s|^2ds+C_1\alpha_0\mathbb{E}\int_0^T(|\hat{Y}_s|^2+|\hat{Z}_s|^2)ds\\
& \leq &
\delta\mathbb{E}<\Phi(x_T)-\Phi(\bar{x}_T)+\hat{x}_T,\hat{X}_T>
+\delta C_1\mathbb{E}\int^T_0<\hat{X}_s,\hat{x}_s>ds\\&&-\delta
\mathbb{E}\int_0^T<\bar{F}(s,\Theta_s)-\bar{F}(s,\bar{\Theta}_s),\hat{U}_s>
  ds \\
& \leq & \delta K_1 \mathbb{E}(|\hat{x}_T|^2+|\hat{X}_T|^2)+\delta
K_1\mathbb{E}\int^T_0\Big(|\hat{u}_s|^2+|\hat{U}_s|^2\Big)ds.
\end{eqnarray*}
This means
\begin{eqnarray*}
&&\mu\mathbb{E}[|\hat{X}_T|^2]+C_1\mathbb{E}\int_0^T|\hat{X}_s|^2ds
\leq  \delta K_1 \mathbb{E}(|\hat{x}_T|^2+|\hat{X}_T|^2)+\delta
K_1\mathbb{E}\int^T_0\Big(|\hat{u}_s|^2+|\hat{U}_s|^2\Big)ds,
\end{eqnarray*}
where $\mu_1\alpha_0+(1-\alpha_0)\geq \mu=\min(1,\mu_1)>0$.

 On the other hand, for the difference
of the solutions $(\hat{Y},\hat{Z}) = (Y-\bar{Y}, Z-\hat{Z})$, we
apply the usual technique to the BSDE part:
\begin{eqnarray*}
\mathbb{E}\int_0^T\Big(|\hat{Y}_s|^2+|\hat{Z}_s|^2\Big)ds \leq K_2
\delta \mathbb{E}\int_0^T|\hat{u}_s|^2ds +K_2 \delta
\mathbb{E}|\hat{x}_T|^2 +K_2 \mathbb{E}\int_0^T|\hat{X}_s|^2ds +C_1
\mathbb{E}|\hat{X}_T|^2.
\end{eqnarray*}
Here the constant $K_2$ depends on the Lipschitz constants $C, C_1,
$ and $T$.

Combining the above two estimates, it is clear that, we have
\begin{equation*}
\mathbb{E}[|\hat{X}_T|^2]+\mathbb{E}\int_0^T|\hat{U}_s|^2ds\\
 \leq  \delta
K\mathbb{E}\Big(\int^T_0|\hat{u}_s|^2ds+|\hat{x}_T|^2\Big).
\end{equation*}

\noindent Here the constant $K$ depends only on $C,\ C_1,\ \mu_1,\ K_1$, and
$T$. We now choose $\delta_0 =\frac{1}{2K}$. It is clear that, for
each fixed $\delta \in [0,\delta_0]$, the mapping
$I_{\alpha_0+\delta}$ is a contraction in the sense that
\begin{equation*}
\mathbb{E}[|\hat{X}_T|^2]+\mathbb{E}\int_0^T|\hat{U}_s|^2ds\\
 \leq
\frac{1}{2}\Big(\mathbb{E}\int^T_0|\hat{u}_s|^2ds+\mathbb{E}|\hat{x}_T|^2\Big).
\end{equation*}

\noindent It follows that this mapping has a unique fixed point
$U^{\alpha_0+\delta}=(X^{\alpha_0+\delta},Y^{\alpha_0+\delta},Z^{\alpha_0+\delta})
$ which is the solution of (\ref{FBSDE (A3)}) for $\alpha =\alpha_0
+ \delta$. The proof is complete.
\end{proof}

We now give the proof of Theorem 10.

{\itshape Proof of Theorem 10.} The uniqueness is obvious from Theorem 7.
When $\alpha =0,$ the equation (\ref{FBSDE (A3)}) has a unique solution. It
then follows from Lemma 3.3 that there exists a positive constant $\delta
_{0}$ depending on Lipschitz constants $C,\ C_{1},\ \mu _{1},\ K_{1}$ and $T$
such that, for each $\delta \in \lbrack 0,\delta _{0}]$, equation (\ref%
{FBSDE (A3)}) for $\alpha =\alpha _{0}+\delta $ has a unique solution. We
can repeat this process for $N$-times with $1\leq N\delta _{0}<1+\delta _{0}$%
. It then follows that, in particular, FBSDEs (\ref{FBSDE (A3)}) for $\alpha
=1$ with $\xi =0$ has a unique solution. The proof is complete.\ \ \ \ \ \
\hfill $\Box $

Theorem 10 can ensure the existence and uniqueness of solution to the
adjoint forward-backward systems in Section 5 and Section 7.

\section{Stochastic maximum principle in mean-field controls of FBSDEs}

In this section, we study the stochastic maximum principle for mean-field
control problem of FBSDEs. The action space $U$ is a non-empty, closed and
convex subset of $\mathbb{R}^k \ (k \in \mathbb{N}^{+})$, and we define the
admissible control set as
\begin{equation*}
\mathcal{U}=\{v_t \in L^2_{\bar{\mathbb{F}}}(0,T;U)|v_t(\omega^{\prime
},\omega):[0,T] \times \Omega \times \Omega \rightarrow U , t\in [0,T]\}.
\end{equation*}

\noindent For any $v(\cdot )\in \mathcal{U}$, we consider the following
forward-backward stochastic control system of Mean-field type:
\begin{equation*}
\left\{
\begin{array}{ll}
dX_{t}=\mathbb{E}^{\prime }[b(t,X_{t}^{\prime },X_{t},v_{t})]dt+\mathbb{E}%
^{\prime }[\sigma (t,X_{t}^{\prime },X_{t},v_{t})]dW_{t}, &  \\
X(0)=x_{0}, &  \\
-dY_{t}=\mathbb{E}^{\prime }[f(t,X_{t}^{\prime },Y_{t}^{\prime
},Z_{t}^{\prime },X_{t},Y_{t},Z_{t},v_{t})]dt-Z_{t}dW_{t},\label{state
equation for control} &  \\
Y_{T}=\Phi (X_{T}), &
\end{array}%
\right.
\end{equation*}%
where
\begin{equation*}
\begin{array}{ll}
b:[0,T]\times \mathbb{R}\times \mathbb{R}\times U\rightarrow \mathbb{R}, &
\\
\sigma :[0,T]\times \mathbb{R}\times \mathbb{R}\times U\rightarrow \mathbb{R}%
^{d}, &  \\
f:[0,T]\times \mathbb{R}\times \mathbb{R}\times \mathbb{R}^{d}\times \mathbb{%
R}\times \mathbb{R}\times \mathbb{R}^{d}\times U\rightarrow \mathbb{R}, &
\\
\Phi :\mathbb{R}\rightarrow \mathbb{R}. &
\end{array}%
\end{equation*}

\noindent The optimal control problem is to minimize the following expected
cost functional over $\mathcal{U}$:
\begin{eqnarray}
J(v(\cdot))&=&\mathbb{E}\left(\int_0^T\mathbb{E}^{\prime }[h(t,X^{\prime
}_t,Y^{\prime }_t,Z^{\prime }_t,X_t,Y_t,Z_t,v(t))]dt\right)  \notag \\
&&+\mathbb{E}\Big(g(X_T)+\gamma(Y(0))\Big),
\label{cost functional for control}
\end{eqnarray}
where
\begin{eqnarray*}
\begin{array}{ll}
g:\mathbb{R}\rightarrow \mathbb{R}, &  \\
\gamma:\mathbb{R}\rightarrow \mathbb{R}, &  \\
h: [0,T]\times \mathbb{R} \times \mathbb{R}\times \mathbb{R}^k\times \mathbb{%
R}\times \mathbb{R}\times \mathbb{R}^k \times U \rightarrow \mathbb{R}. &
\end{array}%
\end{eqnarray*}

\noindent An admissible control $u \in \mathcal{U}$ is said to be optimal if
\begin{equation}
J(u)=\min\limits_{v \in \mathcal{U}}J(v).  \label{optimal control}
\end{equation}

\noindent Now we give the following conditions in this section.

\begin{enumerate}
\item[\textbf{(A1)}] The given functions $b(t,\tilde{x},x,v),\sigma (t,%
\tilde{x},x,v),f(t,\tilde{x},\tilde{y},\tilde{z},x,y,z,v),h(t,\tilde{x},%
\tilde{y},\tilde{z},x,y,z,v),g(x)$ and $\gamma (y)$ are continuously
differentiable with respect to all of their components respectively.

\item[\textbf{(A2)}] All the derivatives in (A1) are Lipschitz continuous
and bounded.
\end{enumerate}

For any admissible controls $v(\cdot )\in \mathcal{U}$, due to Lemma 2, the
mean-field FBSDEs (\ref{state equation for control}) admits a unique
solution under assumptions (A1) and (A2), which is denoted by $%
(X_{t},Y_{t},Z_{t})$.

\subsection{Variational equations and variational inequality}

Let $u(\cdot)$ be an optimal control and $(X^u(\cdot),Y^u(\cdot),Z^u(\cdot))$
be the corresponding state trajectory of stochastic control system. For any $%
0 \leq \theta \leq 1$, we denote by $(X^{\theta}_t,Y^{\theta}_t,Z^{%
\theta}_t) $ the state trajectory corresponding the following perturbation $%
u^{\theta}_t $ of $u_t$.
\begin{equation*}
u^{\theta}_t=u_t+\theta(v_t-u_t),\ \ \ \ v_t \in \mathcal{U}.
\end{equation*}
Since $\mathcal{U}$ is convex, then $u^{\theta}(\cdot)$ is also in $\mathcal{%
U}$. Let $(k(\cdot),m(\cdot),n(\cdot))$ be a solution of the variational
equation
\begin{eqnarray}
&&\left\{
\begin{array}{ll}
dk_t=\mathbb{E}^{\prime }\Big[b_{\tilde{x}}(t,(X^u_t)^{\prime
u}_t,u_t)(k_t)^{\prime }+b_x(t,(X^u_t)^{\prime u}_t,u_t)k_t &  \\
\ \ \ \ \ \ \ \ \ \ \ \ +b_{v}(t,(X^u_t)^{\prime u}_t,u_t)(v_t-u_t)\Big]dt +%
\mathbb{E}^{\prime }\Big[\sigma_{\tilde{x}}(t,(X^u_t)^{\prime
u}_t,u_t)(k_t)^{\prime } &  \\
\ \ \ \ \ \ \ \ \ \ \ \ +\sigma_x(t,(X^u_t)^{\prime
u}_t,u_t)k_t+\sigma_{v}(t,(X^u_t)^{\prime },X_t,u_t)(v_t-u_t)\Big]dW_t, %
\label{variation equation 1 for control} &  \\
k_{0}=0, &
\end{array}
\right. \\
&&\left\{
\begin{array}{ll}
dm_t=-\mathbb{E}^{\prime }[\bar{f}_{\tilde{x}}(t,u)(k_t)^{\prime }+\bar{f}%
_{x}(t,u)k_t+\bar{f}_{\tilde{y}}(t,u)(m_t)^{\prime }+\bar{f}_{y}(t,u)m_t +%
\bar{f}_{\tilde{z}}(t,u)(n_t)^{\prime } &  \\
\label{variation equation 2 for control}\ \ \ \ \ \ \ \ \ \ \ \ \ \ +\bar{f}%
_{z}(t,u)n_t+\bar{f}_{v}(t,u)(v_t-u_t)]dt +n_tdW_t, &  \\
m_T=k_T\Phi_x(X_T). &
\end{array}
\right.
\end{eqnarray}
where we use the notation $\bar{f}(t,u)=f(t,(X^u_t)^{\prime u}_t)^{\prime
u}_t)^{\prime u}_t,Y^u_t,Z^u_t,u_t).$

Set
\begin{eqnarray}
\tilde{X}^\theta_t&=&\theta^{-1}(X^\theta_t-X^u_t)-k_t,  \notag \\
\tilde{Y}^\theta_t&=&\theta^{-1}(Y^\theta_t-Y^u_t)-m_t, \\
\tilde{Z}^\theta_t&=&\theta^{-1}(Z^\theta_t-Z^u_t)-n_t.  \notag
\end{eqnarray}

\noindent Then, we have the following convergence result:

\begin{lemma}
We suppose (A3) and (A4) hold. Then
\begin{eqnarray}
\lim \limits_{\theta \rightarrow 0}\sup\limits_{0\leq t\leq T}\mathbb{E}|%
\tilde{X}^\theta_t|^2=0,  \notag \\
\lim \limits_{\theta \rightarrow 0}\sup\limits_{0\leq t\leq T}\mathbb{E}|%
\tilde{Y}^\theta_t|^2=0,  \label{convergence result for
control} \\
\lim \limits_{\theta \rightarrow 0}\sup\limits_{0\leq t\leq T}\mathbb{E}|%
\tilde{Z}^\theta_t|^2=0.  \notag
\end{eqnarray}
\end{lemma}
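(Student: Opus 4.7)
The plan is to establish the three convergence statements in sequence, first for $\tilde X^\theta$ via forward SDE estimates, and then for $(\tilde Y^\theta, \tilde Z^\theta)$ via mean-field BSDE estimates, after deriving the equations these processes satisfy through a first-order Taylor expansion along the perturbation segment.

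First I would derive the equation satisfied by $\tilde X^\theta$. Writing
\[
\theta^{-1}(X^\theta_t - X^u_t) = \int_0^t \mathbb{E}'\bigl[\theta^{-1}(b(s,(X^\theta)',X^\theta,u^\theta)-b(s,(X^u)',X^u,u))\bigr]ds + \text{(diffusion)},
\]
and expanding $b$ by the fundamental theorem of calculus along the segment
\[
(X^{u,\lambda},u^{u,\lambda}) := (X^u + \lambda(X^\theta-X^u),\, u + \lambda\theta(v-u)),\qquad \lambda\in[0,1],
\]
I obtain
\[
\theta^{-1}\Delta b = \int_0^1\!\bigl[b^\lambda_{\tilde x}\cdot(\tilde X^\theta+k_s)' + b^\lambda_x\cdot(\tilde X^\theta+k_s) + b^\lambda_v\cdot(v_s-u_s)\bigr]d\lambda,
\]
where $b^\lambda_*$ means the derivative evaluated at the Taylor point. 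Subtracting the variational equation \eqref{variation equation 1 for control} (whose coefficients are $b^0_*$) yields an SDE of the form
\[
\tilde X^\theta_t = \int_0^t \mathbb{E}'\bigl[\bar L^\theta_1(s)(\tilde X^\theta_s)' + \bar L^\theta_2(s)\tilde X^\theta_s\bigr]ds + \int_0^t \mathbb{E}'\bigl[\bar M^\theta_1(s)(\tilde X^\theta_s)' + \bar M^\theta_2(s)\tilde X^\theta_s\bigr]dW_s + R^\theta_X(t),
\]
where $\bar L^\theta_i, \bar M^\theta_i$ are bounded uniformly in $\theta$ by (A2), and the residual is
\[
R^\theta_X(t) = \int_0^t \mathbb{E}'\!\!\int_0^1\!\!\bigl[(b^\lambda_{\tilde x}-b^0_{\tilde x})k'_s + (b^\lambda_x-b^0_x)k_s + (b^\lambda_v-b^0_v)(v_s-u_s)\bigr]d\lambda\,ds + (\text{analogous }\sigma\text{-term}).
\]

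Next, I would show $\mathbb{E}\sup_{0\le t\le T}|R^\theta_X(t)|^2 \to 0$ as $\theta\to 0$. Since $X^\theta \to X^u$ in $L^2$ by continuous dependence of mean-field SDEs on the control, the Taylor point $(X^{u,\lambda},u^{u,\lambda})$ converges to $(X^u,u)$ uniformly in $\lambda$, and continuity plus boundedness of $b_{\tilde x},b_x,b_v,\sigma_{\tilde x},\sigma_x,\sigma_v$ together with dominated convergence (controlled by the $L^2$ bounds on $k$ and $v-u$) gives the vanishing of $R^\theta_X$. Combining this with BDG for the stochastic integral, the standard inequality $\mathbb{E}|\mathbb{E}'[\xi']|^2\le \mathbb{E}|\xi|^2$, and Gronwall's lemma applied to $\varphi(t)=\mathbb{E}|\tilde X^\theta_t|^2$ yields $\sup_{0\le t\le T}\mathbb{E}|\tilde X^\theta_t|^2\to 0$.

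For the backward part, an entirely parallel Taylor expansion of $f$ and of $\Phi$ at $X_T$ produces a mean-field BSDE for $(\tilde Y^\theta,\tilde Z^\theta)$,
\[
\tilde Y^\theta_t = \xi^\theta_T + \int_t^T \mathbb{E}'\bigl[\bar f^\theta_{\tilde x}(\tilde X^\theta)' + \bar f^\theta_{\tilde y}(\tilde Y^\theta)' + \bar f^\theta_{\tilde z}(\tilde Z^\theta)' + \bar f^\theta_x \tilde X^\theta + \bar f^\theta_y \tilde Y^\theta + \bar f^\theta_z \tilde Z^\theta\bigr]ds + R^\theta_Y(t) - \int_t^T \tilde Z^\theta_s dW_s,
\]
with bounded Lipschitz coefficients and a terminal error $\xi^\theta_T = \int_0^1(\Phi_x^\lambda-\Phi_x^0)d\lambda\cdot k_T + (\text{terms involving }\tilde X^\theta_T)$ plus a driver residual $R^\theta_Y$ of the same flavor as $R^\theta_X$. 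Using Lemma 2 (existence/uniqueness and standard a priori $L^2$ estimate for mean-field BSDEs), I obtain
\[
\sup_{0\le t\le T}\mathbb{E}|\tilde Y^\theta_t|^2 + \mathbb{E}\!\!\int_0^T\!\!|\tilde Z^\theta_s|^2 ds \le C\Bigl(\mathbb{E}|\xi^\theta_T|^2 + \mathbb{E}\!\!\int_0^T\!\!|R^\theta_Y(s)|^2 ds + \mathbb{E}\!\!\int_0^T\!\!|\tilde X^\theta_s|^2 ds\Bigr),
\]
and every term on the right tends to $0$ by the forward convergence already established, dominated convergence on the Taylor remainders, and $L^2$-continuity of $X^\theta_T\to X^u_T$.

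The main obstacle is the vanishing of the residual terms $R^\theta_X,R^\theta_Y,\xi^\theta_T$: the dependence of the mean-field coefficients on both $\omega$ and $\omega'$ forces me to invoke dominated convergence on the product space, and I must carefully justify that $\int_0^1(b^\lambda_*-b^0_*)d\lambda$ tends to zero in $L^2(ds\otimes d\bar P)$ uniformly enough to be multiplied by the (possibly unbounded but $L^2$-integrable) processes $k,m,n,v-u$. Once this delicate step is handled via (A1)--(A2) and the $L^2$-continuity of the controlled state in $\theta$, the rest is a routine application of BDG, Gronwall, and the standard mean-field BSDE estimate from Lemma 2.
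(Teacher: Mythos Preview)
Your proposal is correct and follows essentially the same route as the paper: Taylor-expand the coefficients along the segment joining the optimal and perturbed states, isolate the residual terms, show they vanish in $L^2$, and conclude by Gronwall (forward) and the standard $L^2$ estimate for mean-field BSDEs (backward). The paper in fact cites Li \cite{Juan Li Automatica} for the forward part and only writes out the backward estimate; your sketch of the forward step is consistent with that.

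One small difference worth noting: where you invoke continuity of the derivatives plus dominated convergence on the product space to kill the residuals, the paper exploits the \emph{Lipschitz} continuity of the derivatives (assumption (A2)) to obtain the quantitative bound $|f_{\tilde x}(\lambda)-\bar f_{\tilde x}(t,u)|\le C\lambda\theta\beta_t$, which immediately gives $\mathbb{E}\int_0^T|A_t|^2dt\le C\theta^2\cdot(\text{bounded})$. This sidesteps the dominated-convergence argument on $\bar\Omega$ that you flagged as the delicate point, and makes the mean-field structure easier to handle since one need only estimate $\mathbb{E}'[\beta_t^2]$ rather than justify a limit under $\mathbb{E}'$. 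Your route is still valid under (A1)--(A2), but the Lipschitz bound is the cleaner device here.
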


\begin{proof}
Since the coefficients in linear mean-field FBSDE (\ref{variation
equation 1 for control}) and (\ref{variation equation 2 for
control}) are bounded, it follows from Proposition 1.2 in
\cite{MFBSDE2} that there exists a unique solution
$(k(t),m(t),n(t))$ for equations (\ref{variation equation 1 for
control}) and (\ref{variation equation 2 for control}).

The proof for the convergence of $\tilde{X}^\theta_t$ can be found
in Lemma 3.2 of \cite{Juan Li Automatica}. We need only to deal with
$\tilde{Y}^\theta_t$ and $\tilde{Z}^\theta_t$. From the definition
of $\tilde{Y}^\theta_t$, it fulfills the following BSDE,
\begin{eqnarray*}
-d\tilde{Y}^\theta_t &=&-\frac{1}{\theta}(dY^\theta_t-dY^u_t)+dm_t \\
                     &=&\frac{1}{\theta}\mathbb{E}'[f(t,(X^{\theta}_t)',(Y^{\theta}_t)',(Z^{\theta}_t)',X^{\theta}_t,Y^{\theta}_t,Z^{\theta}_t,u^{\theta}_t)-\bar{f}(t,u)]dt\\
                     &&-\mathbb{E}'[\bar{f}_{\tilde{x}}(t,u)(k_t)'+\bar{f}_{x}(t,u)k_t+\bar{f}_{\tilde{y}}(t,u)(m_t)'+\bar{f}_{y}(t,u)m_t+\bar{f}_{\tilde{z}}(t,u)(n_t)'+\bar{f}_{z}(t,u)n_t\\
                     &&\ \ \ \ \ \ \ +\bar{f}_{v}(t,u)(v_t-u_t)]dt-\tilde{Z}^\theta_tdW_t.
\end{eqnarray*}

\noindent Denote by
$X^{\lambda,\theta}_t=X^u_t+\lambda\theta(\tilde{X}^\theta_t+k_t),Y^{\lambda,\theta}_t=Y^u_t+\lambda\theta(\tilde{Y}^\theta_t+m_t),Z^{\lambda,\theta}_t=Z^u_t+\lambda\theta(\tilde{Z}^\theta_t+n_t)$
and $u^{\lambda,\theta}(t)=u_t+\lambda\theta (v_t-u_t)$. For
convenience, we introduce the notation
\begin{eqnarray*}
f(\lambda)=f(t,(X^{\lambda,\theta}_t)',(Y^{\lambda,\theta}_t)',(Z^{\lambda,\theta}_t)',X^{\lambda,\theta}_t,Y^{\lambda,\theta}_t,Z^{\lambda,\theta}_t,u^{\lambda,\theta}(t)).
\end{eqnarray*}
Then, we have
\begin{eqnarray*}
\lefteqn{\frac{1}{\theta}\left(f(t,(X^{\theta}_t)',(Y^{\theta}_t)',(Z^{\theta}_t)',X^{\theta}_t,Y^{\theta}_t,Z^{\theta}_t,u^{\theta}_t)-\bar{f}(t,u)\right)dt}
\\
&=&\int_0^1f_{\tilde{x}}(\lambda)((\tilde{X}^\theta_t)'+(k_t)')d\lambda+\int_0^1f_x(\lambda)(\tilde{X}^\theta_t+k_t)d\lambda
+\int_0^1f_{\tilde{y}}(\lambda)((\tilde{Y}^\theta_t)'+(m_t)')d\lambda\\
&&+\int_0^1f_y(\lambda)(\tilde{Y}^\theta_t+m_t)d\lambda+\int_0^1f_{\tilde{z}}(\lambda)((\tilde{Z}^\theta_t)'+(n_t)')d\lambda+\int_0^1f_z(\lambda)(\tilde{Z}^\theta_t+n_t)d\lambda
\\
&&+\int_0^1f_{v}(\lambda)(v_t-u_t)d\lambda,
\end{eqnarray*}
and $\tilde{Y}^\theta_t$ satisfies
\begin{eqnarray}
-d\tilde{Y}^\theta_t &=&
\Bigg\{\int_0^1\mathbb{E}'\Big(f_{\tilde{x}}(\lambda)(\tilde{X}^\theta_t)'+
f_{x}(\lambda)\tilde{X}^\theta_t+
f_{\tilde{y}}(\lambda)(\tilde{Y}^\theta_t)'+
f_{y}(\lambda)\tilde{Y}^\theta_t
+f_{\tilde{z}}(\lambda)(\tilde{Z}^\theta_t)'+
f_{z}(\lambda)\tilde{Z}^\theta_t\Big)d\lambda \nonumber\\
&&+A_t+B_t+C_t+G_t\Bigg\}dt-\tilde{Z}^\theta_tdW_t, \label{lem 4.1
(1)}
\end{eqnarray}
where we denote
\begin{eqnarray*}
A_t&=&\int_0^1\mathbb{E}'\left\{\big(f_{\tilde{x}}(\lambda)-\bar{f}_{\tilde{x}}(t,u)\big)(k_t)'+\big(f_{x}(\lambda)-\bar{f}_{x}(t,u)\big)k_t\right\}d\lambda,
\\
B_t&=&\int_0^1\mathbb{E}'\left\{\big(f_{\tilde{y}}(\lambda)-\bar{f}_{\tilde{y}}(t,u)\big)(m_t)'+\big(f_{y}(\lambda)-\bar{f}_{y}(t,u)\big)m_t\right\}d\lambda,
\\
C_t&=&\int_0^1\mathbb{E}'\left\{\big(f_{\tilde{z}}(\lambda)-\bar{f}_{\tilde{z}}(t,u)\big)(n_t)'+\big(f_{z}(\lambda)-\bar{f}_{z}(t,u)\big)n_t\right\}d\lambda,
\\
G_t&=&\int_0^1\mathbb{E}'\left\{\big(f_{v}(\lambda)-\bar{f}_{v}(t,u)\big)(v_t-u_t)\right\}d\lambda.
\end{eqnarray*}

$A_t$ tends to 0 in $L^2(\Omega \times [0,T])$ as $\theta\rightarrow
0$. Indeed, since the Lipschitz continuity of
$f_{\tilde{x}}(\tilde{x},\tilde{y},\tilde{z},x,y,z),\\
f_x(\tilde{x},\tilde{y},\tilde{z},x,y,z)$ with respect to
$(\tilde{x},\tilde{y},\tilde{z},x,y,z)$, there exists a positive
constant $C$, which may differ from line to line if not specified,
such that:
\begin{eqnarray*}
|f_{\tilde{x}}(\lambda)-\bar{f}_{\tilde{x}}(t,u)|  \leq
C\lambda\theta\beta_t,\ \ \ \ \ \ |f_{x}(\lambda)-\bar{f}_{x}(t,u)|
\leq C\lambda \theta \beta_t,
\end{eqnarray*}
 with
\begin{equation*}
\beta_t=|(\tilde{X}_t^{\theta}+k_t)'|+|(\tilde{Y}_t^{\theta}+m_t)'|+|(\tilde{Z}_t^{\theta}+n_t)'|+|\tilde{X}_t^{\theta}+k_t|+|\tilde{Y}_t^{\theta}+m_t|+|\tilde{Z}_t^{\theta}+n_t|+|v_t-u_t|.
\end{equation*}
Then
\begin{eqnarray*}
|A_t|^2&=&|\int_0^1\mathbb{E}'\left\{\big(f_{\tilde{x}}(\lambda)-\bar{f}_{\tilde{x}}(t,u)\big)(k_t)'+\big(f_{x}(\lambda)-\bar{f}_{x}(t,u)\big)k_t\right\}d\lambda|^2\\
     &\leq&
     C\theta^2\Big(\mathbb{E}'[(|(k_t)'|+|k_t|)\beta_t]\Big)^2\\
&\leq&
     C\theta^2\Big(\mathbb{E}'[|(k_t)'|^2]\mathbb{E}'[\beta_t^2]+|k_t|^2\mathbb{E}'[\beta_t^2]\Big)\\
&=&
     C\theta^2\Big(\mathbb{E}[|k_t|^2]+|k_t|^2\Big)\mathbb{E}'[\beta_t^2].
\end{eqnarray*}

\noindent So
\begin{eqnarray*}
\mathbb{E}\int_0^T|A_t|^2dt  &\leq&   C\theta^2
\mathbb{E}\left(\int_0^T
\mathbb{E}[|k_t|^2]\mathbb{E}'[\beta_t^2]dt\right)\\
&\leq&   C\theta^2 \mathbb{E}\left(\int_0^T
|k_t|^4dt\right)^{\frac{1}{2}}\mathbb{E}\left(\int_0^T
\mathbb{E}'[\beta_t^4]dt\right)^{\frac{1}{2}},
\end{eqnarray*}
which converges to 0 as $\theta\rightarrow 0$ since the expected
values are finite. Similar estimations for $B_t,C_t$ and $G_t$ in
(\ref{lem 4.1 (1)}) show that these terms also converge to 0 in
$L^2(\Omega \times [0,T])$. For  simplicity, we let
$I_t=A_t+B_t+C_t+G_t$. Using Ito's formula to
$|\tilde{Y}^\theta_t|^2$ and noting that assumption (A2), we have
\begin{eqnarray*}
\mathbb{E}|\tilde{Y}^\theta_t|^2+\mathbb{E}\int_t^T|\tilde{Z}^\theta_s|^2ds
&\leq& C\mathbb{E}\int_t^T|\tilde{Y}^\theta_s||
\mathbb{E}(\tilde{X}^\theta_s) +\tilde{X}^\theta_s
+\mathbb{E}(\tilde{Y}^\theta_s) +\tilde{Y}^\theta_s
+\mathbb{E}(\tilde{Z}^\theta_s) +\tilde{Z}^\theta_s
+I_s|ds \\
&\leq & C
\mathbb{E}\int_t^T|\tilde{Y}^\theta_s|^2ds+\frac{1}{2}\mathbb{E}\int_t^T|\tilde{Z}^\theta_s|^2ds+J_{\theta},
\end{eqnarray*}
with
\begin{equation*}
J_{\theta}=\mathbb{E}\int_t^T|\tilde{X}^\theta_s|^2ds+\mathbb{E}\int_t^T|I_s|^2ds,
\end{equation*}
where $C>0$ is a constant and $J_{\theta} \rightarrow 0$ as $\theta
\rightarrow 0$ . Applying Gronwall's lemma gives the last two
results of (\ref{convergence result for control}).
\end{proof}

\begin{lemma}
Under the assumptions (A1) and (A2), for any $v(\cdot)\in \mathcal{U}$, the
following variational inequality holds:
\begin{eqnarray}
&&\mathbb{E}\int_0^T\mathbb{E}^{\prime }\Big(\bar{h}_{\tilde{x}%
}(t)(k_t)^{\prime }+\bar{h}_{x}(t)k_t+\bar{h}_{\tilde{y}}(t)(m_t)^{\prime }+%
\bar{h}_{y}(t)m_t+ \bar{h}_{\tilde{z}}(t)(n_t)^{\prime }+\bar{h}_{z}(t)n_t+%
\bar{h}_{v}(t)(v(t)-u(t))\Big)dt  \notag \\
&&\ \ \ \ \ \ +\mathbb{E}\Big(\gamma_{y}(Y^u(0))m_0\Big)+\mathbb{E}\Big(%
g_{x}(X^u_T)k_T\Big)\geq 0.  \label{variational inequality for control}
\end{eqnarray}
where we denote $h(t,(X^u_t)^{\prime u}_t)^{\prime u}_t)^{\prime
u}_t,Y^u_t,Z^u_t,u_t)$ by $\bar{h}(t).$
\end{lemma}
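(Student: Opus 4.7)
The plan is to exploit the optimality of $u(\cdot)$ via the inequality
\begin{equation*}
0 \leq \lim_{\theta \to 0^{+}} \frac{J(u^{\theta}) - J(u)}{\theta},
\end{equation*}
and then identify this right-hand limit with the expression on the left-hand side of (\ref{variational inequality for control}). I would split $J(u^{\theta}) - J(u)$ into the three natural pieces coming from the definition of $J$: the running cost integral, the terminal cost $g(X_T)$, and the initial cost $\gamma(Y(0))$, and analyse the limit of each $\theta^{-1}$-quotient separately.

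For the terminal piece I would write, via the fundamental theorem of calculus applied to $g$,
\begin{equation*}
\frac{g(X_T^{\theta}) - g(X_T^u)}{\theta} = \Bigl(\int_0^1 g_x\bigl(X_T^u + \lambda(X_T^{\theta}-X_T^u)\bigr)\,d\lambda\Bigr)\cdot\Bigl(\frac{X_T^{\theta}-X_T^u}{\theta}\Bigr).
\end{equation*}
By Lemma 11, $\theta^{-1}(X_T^{\theta} - X_T^u) \to k_T$ in $L^2$; combined with the boundedness and Lipschitz continuity of $g_x$ from (A2) and dominated convergence, this quotient converges in $L^1$ to $g_x(X_T^u)k_T$, producing the term $\mathbb{E}[g_x(X_T^u)k_T]$. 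The identical argument applied at $t=0$ to $\gamma$ (using the $L^2$ convergence $\theta^{-1}(Y_0^{\theta}-Y_0^u) \to m_0$) yields $\mathbb{E}[\gamma_y(Y^u(0))m_0]$.

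For the running cost I would introduce, as in the proof of Lemma 11, the interpolations $X^{\lambda,\theta}, Y^{\lambda,\theta}, Z^{\lambda,\theta}, u^{\lambda,\theta}$ and write
\begin{equation*}
\tfrac{1}{\theta}\bigl(h(t,\Xi^{\theta},u^{\theta}) - \bar h(t)\bigr) = \int_0^1\!\!\bigl(h_{\tilde x}(\lambda)\tfrac{(X^{\theta}-X^u)'}{\theta}+\cdots+h_z(\lambda)\tfrac{Z^{\theta}-Z^u}{\theta}+h_v(\lambda)(v_t-u_t)\bigr)d\lambda,
\end{equation*}
where $\Xi^\theta$ denotes the full $6$-tuple of primed and unprimed state variables. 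Each $h_{\cdot}(\lambda)$ converges uniformly (in $\lambda$) to the corresponding $\bar h_{\cdot}(t)$ thanks to (A2) and Lemma 11, while the ratios $\theta^{-1}(X^{\theta}-X^u),\ \theta^{-1}(Y^{\theta}-Y^u),\ \theta^{-1}(Z^{\theta}-Z^u)$ converge to $k,m,n$ in $\mathcal{M}^2$. Integrating against $\mathbb{E}'$ and then $\mathbb{E}\int_0^T$ and splitting off "frozen" and "error" pieces exactly as the terms $A_t,B_t,C_t,G_t$ were handled in Lemma 11 gives the desired limit
\begin{equation*}
\mathbb{E}\!\int_0^T\!\mathbb{E}'\bigl[\bar h_{\tilde x}(t)(k_t)'+\bar h_x(t)k_t+\bar h_{\tilde y}(t)(m_t)'+\bar h_y(t)m_t+\bar h_{\tilde z}(t)(n_t)'+\bar h_z(t)n_t+\bar h_v(t)(v_t-u_t)\bigr]\,dt.
\end{equation*}

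The main obstacle is the careful treatment of the double expectation $\mathbb{E}\mathbb{E}'$ produced by the mean-field coefficient $h$: the "error" terms in the Taylor expansion involve products like $\mathbb{E}'[(h_{\tilde x}(\lambda)-\bar h_{\tilde x}(t))(k_t)']$, and controlling their $L^1(\Omega\times[0,T])$ norm requires Cauchy--Schwarz/Fubini in the two-factor probability space, together with the higher-moment estimates implicit in Lemma 11 (one needs $k,m,n \in L^4$ and $v-u \in L^4$ on a bounded interval, but bounded derivatives plus the standard BSDE estimates give this). Once this error-term bookkeeping is done, summing the three limits and using $J(u^{\theta})\geq J(u)$ yields (\ref{variational inequality for control}). \hfill$\Box$
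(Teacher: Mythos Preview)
Your proposal is correct and follows essentially the same route as the paper: start from $\theta^{-1}\bigl(J(u^{\theta})-J(u)\bigr)\geq 0$, split $J$ into its three pieces, and use Lemma~11 to pass to the limit in each. The paper's own proof simply asserts the three limits (for $g$, $\gamma$, and the running cost) and combines them, so your write-up is actually more detailed than what appears there; the $L^4$ bookkeeping you flag for the mean-field error terms is indeed the same device used inside the proof of Lemma~11.
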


\begin{proof}
Since $u(\cdot)$ is an optimal control of  the problem, then
\begin{equation}
\theta^{-1}\Big[J\big(u(\cdot)+\theta
(v(\cdot)-u(\cdot))\big)-J(u(\cdot))\Big]\geq 0. \label{cost
inequality for control}
\end{equation}
From the estimate of (\ref{convergence result for control}), when
$\theta \rightarrow 0$, it follows that
\begin{eqnarray*}
&&\frac{1}{\theta}\mathbb{E}[g(X^{\theta}_T)-g(X^u_T)] \rightarrow
\mathbb{E}\big[g_{x}(X^u_T)k_T\big],
\\
&&\frac{1}{\theta}\mathbb{E}[\gamma(Y^{\theta}(0))-\gamma(Y^u(0))]
\rightarrow \mathbb{E}[\gamma_{y}(Y^u(0))m_0],
\\
&&\frac{1}{\theta}\Big\{\mathbb{E}\int_0^T\mathbb{E}'\big[h\big(t,(X^{\theta}_t)',(Y^{\theta}_t)',(Z^{\theta}_t)',X^{\theta}_t,Y^{\theta}_t,Z^{\theta}_t,u(t)+\theta
(v(t)-u(t))\big)-\bar{h}(t)\big]dt\Big\} \rightarrow
\\
&&
\mathbb{E}\int_0^T\mathbb{E}'\Big(\bar{h}_{\tilde{x}}(t)(k_t)'+\bar{h}_{\tilde{y}}(t)(m_t)'+\bar{h}_{\tilde{z}}(t)(n_t)'+\bar{h}_{x}(t)k_t+\bar{h}_{y}(t)m_t+\bar{h}_{z}(t)n_t+\bar{h}_{v}(t)(v(t)-u(t))
\Big)dt.
\end{eqnarray*}
Combining the limits above with (\ref{cost inequality for control})
and the definition of the cost functional, we derive
(\ref{variational inequality for control}) easily.
\end{proof}

\subsection{Adjoint equation and Maximum principle}

For deriving the maximum principle, we introduce the following adjoint
equation corresponding to mean-field FBSDEs (\ref{state equation for control}%
), which is a mean-field FBSDEs and whose solution is denoted by $(p(\cdot
),q(\cdot ),Q(\cdot ))$,

\begin{eqnarray}
\left\{
\begin{array}{ll}
-dp_t=\mathbb{E}^{\prime }\Big(b_{\tilde{x}}(t,(X^u_t)^{\prime
u}_t,u_t)(p_t)^{\prime }+b_{x}(t,(X^u_t)^{\prime u}_t,u_t)p_t &  \\
\ \ \ \ \ \ \ \ \ \ +\sigma_{\tilde{x}}(t,(X^u_t)^{\prime
u}_t,u_t)(q_t)^{\prime }+\sigma_{x}(t,(X^u_t)^{\prime u}_t,u_t)q_t\Big)dt &
\\
\ \ \ \ \ \ \ \ \ \ +\mathbb{E}^{\prime }\Big(\bar{h}_{\tilde{x}}(t) +\bar{h}%
_{x}(t)-\bar{f}_{\tilde{x}}(t,u)(Q_t)^{\prime }-\bar{f}_{x}(t,u)Q_t\Big)%
dt-q_tdW_t, &  \\
dQ_t=\mathbb{E}^{\prime }\Big(\bar{f}_{\tilde{y}}(t,u)(Q_t)^{\prime }+\bar{f}%
_{y}(t,u)Q_t-\bar{h}_{\tilde{y}}(t) -\bar{h}_{y}(t)\Big)dt &  \\
\ \ \ \ \ \ \ \ \ + \mathbb{E}^{\prime }\Big(\bar{f}_{\tilde{z}%
}(t,u)(Q_t)^{\prime }+\bar{f}_{z}(t,u)Q_t-\bar{h}_{\tilde{z}}(t) -\bar{h}%
_{z}(t)\Big)dW_t, &  \\
p_{T}=g_{x}(X^u_T)-\Phi_{x}(X^u_T)Q_T,\ Q_0=-\gamma_{y}(Y^u(0)). \label%
{adjoint equation for control} &
\end{array}
\right.
\end{eqnarray}

This equation reduces to the standard one, when the coefficients do not
depend explicitly on $\omega ^{\prime }$ of the underlying diffusion. Under
the assumptions (A1) and (A2), this is a linear mean-field FBSDEs with
bounded coefficients. Moreover, due to Lemma 2.2 and Theorem 4.1 in \cite%
{MFBSDE2}, it admits a unique $\mathbb{F}$-adapted solution $(Q,p,q)$ such
that
\begin{equation*}
\mathbb{E}\Big[\sup\limits_{0\leq t\leq T}|Q(t)|^{2}+\sup\limits_{0\leq
t\leq T}|p(t)|^{2}+\int_{0}^{T}|q(t)|^{2}dt\Big]<+\infty .
\end{equation*}

\noindent Next, we define the Hamiltonian function as follows:
\begin{eqnarray*}
H(t,\tilde{x},\tilde{y},\tilde{z},x,y,z,p,q,Q,v)&=&b(t,\tilde{x}%
,x,v)p+\sigma(t,\tilde{x},x,v)q -f(t,\tilde{x},\tilde{y},\tilde{z},x,y,z,v)Q
\\
&&+h(t,\tilde{x},\tilde{y},\tilde{z},x,y,z,v).
\end{eqnarray*}

The following theorem constitutes the main result of this section.

\begin{theorem}
(SMP in Integral Form). Suppose (A1)-(A2) hold. Let $u(\cdot)$ be an optimal
control of the problem, and $(X^u(\cdot),Y^u(\cdot),Z^u(\cdot))$ denote the
corresponding trajectory. Then, for all $v \in \mathcal{U}$, there holds
\begin{eqnarray}
\mathbb{E}\int_0^T\mathbb{E}^{\prime }\big[H_{v}\big(t,X^{\prime
}_t,Y^{\prime }_t,Z^{\prime }_t,X_t,Y_t,Z_t,p_t,q_t,Q_t,u(t)\big)%
(v(t)-u(t))]dt\geq 0,  \label{SMP in control}
\end{eqnarray}
a.e.,a.s., where $(p(\cdot),q(\cdot),Q(\cdot))$ is the the solution of
adjoint equation (\ref{adjoint equation for control}).
\end{theorem}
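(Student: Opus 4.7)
The plan is to derive the integral SMP by applying It\^o's formula to the two products $p_t k_t$ and $Q_t m_t$ on $[0,T]$, taking expectations, and substituting the resulting identities into the variational inequality of Lemma 12 so that every term not involving the control perturbation $v-u$ is absorbed. The adjoint system (\ref{adjoint equation for control}) has been tailored precisely to make this cancellation occur: the $\bar{h}_{\tilde{x}},\bar{h}_x,\bar{h}_{\tilde{y}},\bar{h}_y,\bar{h}_{\tilde{z}},\bar{h}_z$ terms that appear in (\ref{variational inequality for control}) show up with opposite sign in the adjoint drifts, and the terminal/initial data $p_T=g_x(X^u_T)-\Phi_x(X^u_T)Q_T$, $m_T=k_T\Phi_x(X^u_T)$, $Q_0=-\gamma_y(Y^u(0))$ absorb the boundary contributions $g_x(X^u_T)k_T$ and $\gamma_y(Y^u(0))m_0$.

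Concretely, I would first apply It\^o's formula to $p_tk_t$ on $[0,T]$ and take expectation. Since $k_0=0$, this expresses $\mathbb{E}[p_Tk_T]$ as a time integral containing (a) the drift from $p\,dk$, which multiplies $p_t$ against $\mathbb{E}'[b_{\tilde{x}}(k_t)'+b_xk_t+b_v(v-u)]$, (b) the drift from $k\,dp$, which multiplies $-k_t$ against the full adjoint drift of $p$, and (c) the quadratic covariation $q_t\,\mathbb{E}'[\sigma_{\tilde{x}}(k_t)'+\sigma_xk_t+\sigma_v(v-u)]$. Second, I would apply It\^o to $Q_tm_t$ and take expectation, obtaining an analogous identity involving $\mathbb{E}[Q_Tm_T]-\mathbb{E}[Q_0m_0]$ together with time integrals in $\bar{f}_{\tilde{x}},\bar{f}_x,\bar{f}_{\tilde{y}},\bar{f}_y,\bar{f}_{\tilde{z}},\bar{f}_z,\bar{f}_v$ and $\bar{h}_{\tilde{y}},\bar{h}_y,\bar{h}_{\tilde{z}},\bar{h}_z$; here the $\bar{f}_{\tilde{z}},\bar{f}_z$ terms enter through the $dW$-covariation against $n_t$.

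Third, I would combine the two It\^o identities with the terminal relations, noting that
$$
\mathbb{E}[g_x(X^u_T)k_T]=\mathbb{E}[p_Tk_T]+\mathbb{E}[Q_Tm_T],\qquad \mathbb{E}[\gamma_y(Y^u(0))m_0]=-\mathbb{E}[Q_0m_0],
$$
and apply the mean-field Fubini identity on the product space $(\bar{\Omega},\bar{\mathcal{F}},\bar{P})$ — swapping the dummy variables $\omega\leftrightarrow\omega'$ inside the double integrals — to realize the pairwise cancellation of the $b_{\tilde{x}},b_x,\sigma_{\tilde{x}},\sigma_x$ contributions coming from $p\,dk+k\,dp$ and of the $\bar{f}_{\tilde{y}},\bar{f}_y,\bar{f}_{\tilde{z}},\bar{f}_z$ contributions coming from $Q\,dm+m\,dQ$. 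The $\bar{f}_{\tilde{x}},\bar{f}_x$ pieces that survive in the $k\,dp$ drift cancel against those from $m\,dQ$ through the terminal matching above. Finally, substituting everything into (\ref{variational inequality for control}), the $\bar{h}$-derivative terms of the variational inequality cancel those left from the adjoint drifts, the boundary terms drop out, and what remains is
$$
\mathbb{E}\int_0^T\mathbb{E}'\bigl[(b_v\,p_t+\sigma_v\,q_t-\bar{f}_v\,Q_t+\bar{h}_v)(v_t-u_t)\bigr]\,dt\ge 0,
$$
which is (\ref{SMP in control}) by the definition of the Hamiltonian $H$.

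The main obstacle, and the only genuinely new ingredient beyond the classical proof of Peng \cite{Peng 1993}, is the mean-field duality in the cancellation step. In the classical case, the cross terms from $p\,dk$ and $k\,dp$ coincide automatically, since the same coefficient multiplies both $p$ and $k$ pointwise in $\omega$. Here one must instead relate expressions like $\mathbb{E}[p_t\,\mathbb{E}'[b_{\tilde{x}}(k_t)']]$ and $\mathbb{E}[k_t\,\mathbb{E}'[b_{\tilde{x}}(p_t)']]$ by writing them as double integrals over $\bar{\Omega}$ and exchanging the roles of $\omega$ and $\omega'$; verifying that the adjoint coefficients in (\ref{adjoint equation for control}) are positioned so that this exchange yields exact cancellation is the only place where the mean-field structure must be handled carefully. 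Once this bookkeeping is dispatched, the remainder of the argument follows the standard It\^o-adjoint pattern.
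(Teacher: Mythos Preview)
Your proposal is correct and follows essentially the same route as the paper: apply It\^o's formula to $p_tk_t+Q_tm_t$ (the paper does both products at once, you do them separately---this is immaterial), use the terminal/initial data of the adjoint and variational systems to convert the boundary terms into $\mathbb{E}[g_x(X^u_T)k_T]+\mathbb{E}[\gamma_y(Y^u(0))m_0]$, and then combine with the variational inequality (Lemma~13 in the paper's numbering, not Lemma~12) so that only the $H_v(v-u)$ term survives. Your explicit discussion of the $\omega\leftrightarrow\omega'$ Fubini swap is exactly the mechanism the paper uses implicitly; one small slip---the surviving $\bar f_{\tilde x},\bar f_x$ terms from $k\,dp$ cancel against those in $Q\,dm$, not $m\,dQ$---does not affect the argument.
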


\begin{proof}
Applying It$\hat{o}$'s formula to $k_tp_t+m_tQ_t$ yields
\begin{eqnarray*}
\lefteqn{\mathbb{E}\Big[k_Tp_T+m_TQ_T-k_0p_0-m_0Q_0\Big]}  \\
&=&\mathbb{E}\Big[g_x(X^u_T)k_T+m_0\gamma_y(Y^u(0))\Big]\\
&=&\mathbb{E}\int_0^T\mathbb{E}'\Big[\left(p_tb_v(t,(X^u_t)',X^u_t,u_t)+q_t\sigma_v(t,(X^u_t)',X^u_t,u_t)-Q_t\bar{f}_v(t,u)\right)(v(t)-u(t))\Big]dt\\
&&-\mathbb{E}\int_0^T\mathbb{E}'\Big(k_t\bar{h}_{\tilde{x}}(t)+k_t\bar{h}_{x}(t)+m_t\bar{h}_{\tilde{y}}(t)+m_t\bar{h}_{y}(t)
+n_t\bar{h}_{\tilde{z}}(t)+n_t\bar{h}_{z}(t)\Big)dt.
\end{eqnarray*}
Together with Lemma 13, we derive
\begin{eqnarray*}
&& \mathbb{E}\int_0^T\mathbb{E}'\Big[\left(p_tb_v(t,(X^u_t)',X^u_t,u_t)+q_t\sigma_v(t,(X^u_t)',X^u_t,u_t)-Q_t\bar{f}_v(t,u)+\bar{h}_{v}(t)\right)(v(t)-u(t))\Big]dt \\
&=&\mathbb{E}\int_0^T\mathbb{E}'[H_v(t,(X^u_t)',(Y^u_t)',(Z^u_t)',X^u_t,Y^u_t,Z^u_t,p_t,q_t,Q_t,u_t)(v(t)-u(t))]dt
\geq 0.
\end{eqnarray*}
Thus, we come to the conclusion of this theorem.
\end{proof}

\begin{remark}
From (\ref{SMP in control}), we can get that
\begin{equation}
\mathbb{E}^{\prime }\big[H_{v}\big(t,X^{\prime }_t,Y^{\prime }_t,Z^{\prime
}_t,X_t,Y_t,Z_t,p_t,q_t,Q_t,u(t)\big)(v-u(t))]\geq 0,
\end{equation}
$dtdP$-a.e., for any $v \in \mathcal{U}$.
\end{remark}

\subsection{Sufficient conditions for maximum principle}

This section is devoted to establish the sufficient maximum principle (also
called verification theorem) of the mean-field stochastic control problem.

We need the following additional assumptions.

\begin{enumerate}
\item[\textbf{(A3)}] The function $\Phi $ is convex in $x$. $g$ is convex in
$x$ and $\gamma $ is convex in $y$.
\end{enumerate}

\begin{theorem}
(Sufficient Conditions for the Optimality of the control) Assume that the
conditions (A1)-(A3) are satisfied. Let $u(\cdot )\in \mathcal{U}$ with
state trajectory $(X_{t}^{u},Y_{t}^{u},Z_{t}^{u})$ and $(p(\cdot ),q(\cdot
),Q(\cdot ))$ be the solution of Mean-field FBSDE (\ref{adjoint equation for
control}). Suppose
\begin{eqnarray*}
\lefteqn{\mathbb{E}\prime \lbrack H(t,(X_{t}^{u})\prime ,(Y_{t}^{u})\prime
,(Z_{t}^{u})\prime ,X_{t}^{u},Y_{t}^{u},Z_{t}^{u},p_{t},q_{t},Q_{t},u(t))]}
\\
&=&\min\limits_{v\in U}\mathbb{E}_{t}^{\prime u})_{t}^{\prime
u})_{t}^{\prime u})_{t}^{\prime u},Y_{t}^{u},Z_{t}^{u},p_{t},q_{t},Q_{t},v)]
\end{eqnarray*}%
hold for all $t\in \lbrack 0,T]$. Moreover, suppose function $H(t,\tilde{x},%
\tilde{y},\tilde{z},x,y,z,p,q,Q,v)$ is convex with respect to $(t,\tilde{x},%
\tilde{y},\tilde{z},x,y,z,v)$. Then $u$ is an optimal control of problem (%
\ref{state equation for control})-(\ref{optimal control}).
\end{theorem}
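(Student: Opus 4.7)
The plan is as follows. Fix an arbitrary $v(\cdot)\in\mathcal{U}$ with state trajectory $(X^{v},Y^{v},Z^{v})$, set $\tilde{X}_{t}=X^{v}_{t}-X^{u}_{t}$, $\tilde{Y}_{t}=Y^{v}_{t}-Y^{u}_{t}$, $\tilde{Z}_{t}=Z^{v}_{t}-Z^{u}_{t}$, and decompose
\[
J(v)-J(u)=\mathbb{E}\!\int_{0}^{T}\!\mathbb{E}'[h(t,\Lambda^{v}_{t},v_{t})-h(t,\Lambda^{u}_{t},u_{t})]\,dt+\mathbb{E}[g(X^{v}_{T})-g(X^{u}_{T})]+\mathbb{E}[\gamma(Y^{v}_{0})-\gamma(Y^{u}_{0})].
\]
The objective is to show this quantity is non-negative. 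First I would exploit the convexity of $g$ and $\gamma$ provided by (A3) to obtain $g(X^{v}_{T})-g(X^{u}_{T})\geq g_{x}(X^{u}_{T})\tilde{X}_{T}$ and $\gamma(Y^{v}_{0})-\gamma(Y^{u}_{0})\geq\gamma_{y}(Y^{u}_{0})\tilde{Y}_{0}$. Substituting the boundary conditions of the adjoint equation, $p_{T}=g_{x}(X^{u}_{T})-\Phi_{x}(X^{u}_{T})Q_{T}$ and $Q_{0}=-\gamma_{y}(Y^{u}_{0})$, rewrites the two boundary pieces as $\mathbb{E}[p_{T}\tilde{X}_{T}+\Phi_{x}(X^{u}_{T})Q_{T}\tilde{X}_{T}-Q_{0}\tilde{Y}_{0}]$.

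Next I apply It\^{o}'s formula to $p_{t}\tilde{X}_{t}$ and to $Q_{t}\tilde{Y}_{t}$ on $[0,T]$, using $\tilde{X}_{0}=0$ and $\tilde{Y}_{T}=\Phi(X^{v}_{T})-\Phi(X^{u}_{T})$. Taking expectations, the boundary contributions combine with the ones displayed above, leaving $\mathbb{E}[Q_{T}(\Phi(X^{v}_{T})-\Phi(X^{u}_{T})-\Phi_{x}(X^{u}_{T})\tilde X_{T})]$ plus a collection of running integrals coming from the drift/diffusion differences of the forward equation, the generator difference of the backward equation, and the adjoint drifts of $p$, $q$ and $Q$. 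At this stage I would invoke the product-measure symmetry identity $\mathbb{E}\bar{\mathbb{E}}[\phi(\omega,\omega')]=\mathbb{E}\bar{\mathbb{E}}[\phi(\omega',\omega)]$ to swap the primed variables in the mean-field partial derivatives $b_{\tilde x},\sigma_{\tilde x},f_{\tilde x},f_{\tilde y},f_{\tilde z}$ appearing in the adjoint drift with the corresponding unprimed ones coming from the It\^{o} expansion, so that at each $(t,\omega',\omega)$ the full gradient of the Hamiltonian $H$ in $(\tilde x,\tilde y,\tilde z,x,y,z,v)$ assembles against the increments $(\tilde X'_{t},\tilde Y'_{t},\tilde Z'_{t},\tilde X_{t},\tilde Y_{t},\tilde Z_{t},v_{t}-u_{t})$.

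After this rearrangement the running integrals take the form $-\mathbb{E}\int_{0}^{T}\mathbb{E}'[\nabla_{\!(\tilde x,\ldots,v)}H|_{u}\cdot(\tilde X'_{t},\ldots,\tilde Z_{t},v_{t}-u_{t})]\,dt$, while the $h$-integral in $J(v)-J(u)$ contributes $\mathbb{E}\int_{0}^{T}\mathbb{E}'[H^{v}-H^{u}]\,dt$ up to the $bp+\sigma q-fQ$ corrections that were already extracted. The convexity of $\Phi$ renders the terminal remainder $Q_{T}[\Phi(X^{v}_{T})-\Phi(X^{u}_{T})-\Phi_{x}(X^{u}_{T})\tilde X_{T}]$ controllable when combined with the $-fQ$ part of $H$, and the global convexity of $H$ in $(\tilde x,\tilde y,\tilde z,x,y,z,v)$ supplies the gradient inequality $H^{v}-H^{u}\geq\nabla H|_{u}\cdot(\Lambda^{v}-\Lambda^{u},v-u)$, which kills all the first-order terms and leaves only $\mathbb{E}\int_{0}^{T}\mathbb{E}'[H_{v}(t,\Lambda^{u}_{t},p_{t},q_{t},Q_{t},u_{t})(v_{t}-u_{t})]\,dt$. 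Finally, the hypothesised pointwise minimum of $\mathbb{E}'[H(\cdot,v)]$ over the convex set $U$ at $v=u$ yields, as a first-order necessary condition, $\mathbb{E}'[H_{v}|_{u}(v-u)]\geq 0$ a.e., a.s., whence $J(v)-J(u)\geq 0$.

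The main obstacle is the mean-field bookkeeping in the second and third steps: after It\^{o} one encounters cross terms such as $\mathbb{E}\bar{\mathbb{E}}[b_{\tilde x}(\omega',\omega,\ldots)\tilde X_{t}(\omega')p_{t}(\omega)]$ from the forward SDE and $\mathbb{E}\bar{\mathbb{E}}[b_{\tilde x}(\omega,\omega',\ldots)p_{t}(\omega')\tilde X_{t}(\omega)]$ from the adjoint drift, which coincide only after an explicit $(\omega',\omega)$-exchange in the $\bar{P}=P\otimes P$ integral; missing any one of these swaps leaves dangling expressions that obstruct the final convexity step. A secondary delicate point is that the sign of $Q_{T}$ is not a priori controlled, so the $\Phi$-convexity remainder at $T$ cannot be disposed of in isolation and must be absorbed, together with the $-fQ$ contribution, into the global convexity hypothesis on $H$.
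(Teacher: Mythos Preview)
Your approach is essentially the paper's: the same three-term decomposition of $J(v)-J(u)$, convexity of $g$ and $\gamma$ for the boundary terms, It\^{o}'s formula applied to $p_{t}\tilde X_{t}+Q_{t}\tilde Y_{t}$, the mean-field Fubini swap to reassemble the full gradient of $H$, and then the convexity of $H$ together with the pointwise minimum condition. You are in fact more explicit than the paper about the $(\omega',\omega)$-exchange, which the authors use tacitly.

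One remark on your ``secondary delicate point'': the paper does \emph{not} absorb the $\Phi$-remainder into the convexity of $H$. It simply invokes the inequality $\Phi(X^{v}_{T})-\Phi(X^{u}_{T})\geq\Phi_{x}(X^{u}_{T})\tilde X_{T}$ together with the It\^{o} identity and drops the residual $\mathbb{E}[Q_{T}(\Phi(X^{v}_{T})-\Phi(X^{u}_{T})-\Phi_{x}(X^{u}_{T})\tilde X_{T})]$ without checking the sign of $Q_{T}$. Your instinct that this step is delicate is correct, but your proposed fix (folding it into the $-fQ$ part of $H$) does not work, since $\Phi$ lives at $t=T$ and is not part of the running Hamiltonian; the paper's own argument carries the same unaddressed sign assumption, so matching the paper does not require you to resolve it.
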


\begin{proof}
For any $v(\cdot) \in \mathcal{U}$, we consider
\begin{equation}
J(v(\cdot))-J(u(\cdot))=\textrm{I}+\textrm{II}+\textrm{III}
\label{sufficicent equality 1}
\end{equation}
with
\begin{eqnarray*}
 \textrm{I}&=&\mathbb{E}\int_0^T\mathbb{E}'\bigg[h(t,(X^v_t)',(Y^v_t)',(Z^v_t)',X^v_t,Y^v_t,Z^v_t,v(t))
 \\&& -h(t,(X^u_t)',(Y^u_t)',(Z^u_t)',X^u_t,Y^u_t,Z^u_t,u(t))
 \bigg]dt,
\\
\textrm{II}&= &\mathbb{E}\bigg[g(X^v_T)-g(X^u_T)\bigg],\\
\textrm{III}&=&\mathbb{E}\bigg[\gamma(Y^v(0))-\gamma(Y^{u}(0))\bigg].
\end{eqnarray*}
Since $g$ is convex, it holds that
\begin{eqnarray}
\textrm{II}=\mathbb{E}\big(g(X^v_T)-g(X^u_T)\big)  \geq
\mathbb{E}[g_{x}(X^u_T)(X^v_T-X^u_T)].\label{II}
\end{eqnarray}
Due to $\gamma$ is convex on $y$, we have
\begin{equation}
\textrm{III} \geq
\gamma_{y}(Y^u(0))(Y^v(0)-Y^u(0))=-Q_0(Y^v(0)-Y^u(0)).\label{III}
\end{equation}
From (\ref{II}) and (\ref{III}), we get
\begin{eqnarray}
\textrm{II}+\textrm{III} \geq
\mathbb{E}\big[g_x(X^u_T)(X^v_T-X^u_T)+\gamma_y(Y^u(0))(Y^v(0)-Y^{u}(0))]
\label{II+III}.
\end{eqnarray}

\noindent Since $\Phi$ is convex,
\begin{eqnarray}
Y^v_T-Y^{u}_T =\Phi(X^v_T)-\Phi(X^u_T) \geq
\Phi_x(X^u_T)(X^v_T-X^u_T). \label{convex}
\end{eqnarray}
By applying It\textrm{$\hat{o}$}'s formula to
$Q_t(Y^v_t-Y^u_t)+p_t(X^v_t-X^u_t)$, we get
\begin{eqnarray}
&& \mathbb{E}\big[Q_T(Y^v_T-Y^{u}_T)-Q_0(Y^v(0)-Y^{u}(0))+p_T(X^v_T-X^u_T)-p(0)(X^v(0)-X^u(0))\big] \nonumber \\
&=&\mathbb{E}\big[Q_T(Y^v_T-Y^{u}_T)+\gamma_y(Y^u(0))(Y^v(0)-Y^{u}(0))+(g_x(X^u_T)-\Phi_x(X^u_T)Q_T)(X^v_T-X^u_T)\big]  \nonumber  \\
&=&\mathbb{E}\int_0^T\mathbb{E}'\Big\{p_t(b(t,(X^v_t)',X^v_t,v(t))-b(t,(X^u_t)',X^u_t,u(t)))-Q_t(\bar{f}(t,v)-\bar{f}(t,u))\nonumber \\
&&\ \ \ \ \ \ \ \ \ \ \ \ \
+q_t(\sigma(t,(X^v_t)',X^v_t,v(t))-\sigma(t,(X^u_t)',X^u_t,u(t)))\Big\}dt+\textrm{IV},
\label{sufficient equality 2}
\end{eqnarray}
with
\begin{eqnarray*}
\textrm{IV}&=&\mathbb{E}\int_0^T\mathbb{E}'\Bigg\{\Big(\bar{f}_{\tilde{y}}(t,u)(Q_t)'+\bar{f}_{y}(t,u)Q_t-\bar{h}_{\tilde{y}}(t)-\bar{h}_{y}(t)\Big)(Y^v_t-Y^u_t)\\
&&\ \ \ \ \ \ \ \ \ \ +\Big(\bar{f}_{\tilde{z}}(t,u)(Q_t)'+\bar{f}_{z}(t,u)Q_t-\bar{h}_{\tilde{z}}(t)-\bar{h}_{z}(t)\Big)(Z^v_t-Z^u_t)\\
&&\ \ \ \ \ \ \ \ \ \
-\Big(b_{\tilde{x}}(t,(X^u_t)',X^u_t,u_t)(p_t)'+b_{x}(t,(X^u_t)',X^u_t,u_t)p_t\\
&&\ \ \ \ \ \ \ \ \ \
+\sigma_{\tilde{x}}(t,(X^u_t)',X^u_t,u_t)(q_t)'+\sigma_{x}(t,(X^u_t)',X^u_t,u_t)q_t\Big)(X^v_t-X^u_t)\\
&&\ \ \ \ \ \ \ \ \ \ -\Big(\bar{h}_{\tilde{x}}(t)
+\bar{h}_{x}(t)-\bar{f}_{\tilde{x}}(t,u)(Q_t)'-\bar{f}_{x}(t,u)Q_t\Big)(X^v_t-X^u_t)
\Bigg\}dt
\end{eqnarray*}
\begin{eqnarray*}
&=&-\mathbb{E}\int_0^T\mathbb{E}'\Big\{H_{\tilde{x}}(t,(X^u_t)',(Y^u_t)',(Z^u_t)',X^u_t,Y^u_t,Z^u_t,p_t,q_t,Q_t,u(t))(X_t^v-X^u_t)'
\\
&&\ \ \ \ \ \ \ \ \ +H_{x}(t,(X^u_t)',(Y^u_t)',(Z^u_t)',X^u_t,Y^u_t,Z^u_t,p_t,q_t,Q_t,u(t))(X_t^v-X^u_t)\\
&&\ \ \ \ \ \ \ \ \ +H_{\tilde{y}}(t,(X^u_t)',(Y^u_t)',(Z^u_t)',X^u_t,Y^u_t,Z^u_t,p_t,q_t,Q_t,u(t))(Y_t^v-Y^u_t)'\\
&&\ \ \ \ \ \ \ \ \ +H_{y}(t,(X^u_t)',(Y^u_t)',(Z^u_t)',X^u_t,Y^u_t,Z^u_t,p_t,q_t,Q_t,u(t))(Y_t^v-Y^u_t)\\
&&\ \ \ \ \ \ \ \ \ +H_{\tilde{z}}(t,(X^u_t)',(Y^u_t)',(Z^u_t)',X^u_t,Y^u_t,Z^u_t,p_t,q_t,Q_t,u(t))(Z_t^v-Z^u_t)'\\
&&\ \ \ \ \ \ \ \ \
+H_{z}(t,(X^u_t)',(Y^u_t)',(Z^u_t)',X^u_t,Y^u_t,Z^u_t,p_t,q_t,Q_t,u(t))(Z_t^v-Z^u_t)\Big\}dt.
\end{eqnarray*}

\noindent Together with (\ref{sufficicent equality 1}),
(\ref{II+III}),(\ref{convex}) and (\ref{sufficient equality 2}), we
get
\begin{eqnarray}
&&J(v(\cdot))-J(u(\cdot))=\textrm{I}+\textrm{II}+\textrm{III}
 \nonumber \\&\geq&
\textrm{I}+\mathbb{E}\int_0^T\mathbb{E}'
\Big\{p_t(b(t,(X^v_t)',X^v_t,v(t))-b(t,(X^u_t)',X^u_t,u(t)))-Q_t(\bar{f}(t,v)-\bar{f}(t,u))\nonumber \\
&&\ \ \ \ \ \ \ \ \ \ \ \ \
+q_t(\sigma(t,(X^v_t)',X^v_t,v(t))-\sigma(t,(X^u_t)',X^u_t,u(t)))\Big\}dt+\textrm{IV} \nonumber  \\
&=&\mathbb{E}\int_0^T\mathbb{E}'\bigg[H(t,(X^v_t)',(Y^v_t)',(Z^v_t)',X^v_t,Y^v_t,Z^v_t,p_t,q_t,Q_t,v(t))
 \nonumber \\&& \ \ \ \  \ \ \ \ \  -H(t,(X^u_t)',(Y^u_t)',(Z^u_t)',X^u_t,Y^u_t,Z^u_t,p_t,q_t,Q_t,u(t))
 \bigg]dt+\textrm{IV}. \label{sufficient equality 3}
\end{eqnarray}

\noindent Noticing $H$ is convex with respect to
$(\tilde{x},\tilde{y},\tilde{z},x,y,z)$, the use of the Clark
generalized gradient of $H$, evaluated at
$((X_t^u)',(Y_t^u)',(Z_t^u)',X_t^u,Y_t^u,Z_t^u)$, yields
\begin{eqnarray*}
&&H(t,(X^v_t)',(Y^v_t)',(Z^v_t)',X^v_t,Y^v_t,Z^v_t,p_t,q_t,Q_t,v(t))\\&&-H(t,(X^u_t)',(Y^u_t)',(Z^u_t)',X^u_t,Y^u_t,Z^u_t,p_t,q_t,Q_t,u(t))\\
& \geq
&H_{\tilde{x}}(t,(X^u_t)',(Y^u_t)',(Z^u_t)',X^u_t,Y^u_t,Z^u_t,p_t,q_t,Q_t,u(t))(X_t^v-X^u_t)'
\\&&+H_{x}(t,(X^u_t)',(Y^u_t)',(Z^u_t)',X^u_t,Y^u_t,Z^u_t,p_t,q_t,Q_t,u(t))(X_t^v-X^u_t)\\
&&+H_{\tilde{y}}(t,(X^u_t)',(Y^u_t)',(Z^u_t)',X^u_t,Y^u_t,Z^u_t,p_t,q_t,Q_t,u(t))(Y_t^v-Y^u_t)'\\
&&+H_{y}(t,(X^u_t)',(Y^u_t)',(Z^u_t)',X^u_t,Y^u_t,Z^u_t,p_t,q_t,Q_t,u(t))(Y_t^v-Y^u_t)\\
&&+H_{\tilde{z}}(t,(X^u_t)',(Y^u_t)',(Z^u_t)',X^u_t,Y^u_t,Z^u_t,p_t,q_t,Q_t,u(t))(Z_t^v-Z^u_t)'\\
&&+H_{z}(t,(X^u_t)',(Y^u_t)',(Z^u_t)',X^u_t,Y^u_t,Z^u_t,p_t,q_t,Q_t,u(t))(Z_t^v-Z^u_t)\\
&&+H_{v}(t,(X^u_t)',(Y^u_t)',(Z^u_t)',X^u_t,Y^u_t,Z^u_t,p_t,q_t,Q_t,u(t))(v_t-u_t).
\end{eqnarray*}
Combined the above inequality with (\ref{sufficient equality 3}), we
have
\begin{eqnarray*}
\lefteqn{J(v(\cdot))-J(u(\cdot))\geq } \nonumber \\
&&\mathbb{E}\int_0^T\mathbb{E}'\Big[H_{v}(t,(X^u_t)',(Y^u_t)',(Z^u_t)',X^u_t,Y^u_t,Z^u_t,p_t,q_t,Q_t,u(t))\big(v(t)-u(t)\big)\Big]dt\geq 0. \nonumber\\
 \label{sufficient equality 4}
\end{eqnarray*}
Hence, we draw the desired conclusion.
\end{proof}

\section{Stochastic maximum principle for fully coupled forward-backward
stochastic control systems of mean-field type}

In this section, we extend control problems to the fully coupled mean-field
FBSDEs. For any $v(\cdot)\in \mathcal{U}$, the state equation consists of
the following forward-backward control system of mean-field type:
\begin{eqnarray}
\left\{
\begin{array}{ll}
dX_t=\mathbb{E}^{\prime }[b(t,X^{\prime }_t,Y^{\prime }_t,Z^{\prime
}_t,X_t,Y_t,Z_t, v_t)]dt+\mathbb{E}^{\prime }[\sigma(t,X^{\prime
}_t,Y^{\prime }_t,Z^{\prime }_t,X_t,Y_t,Z_t, v_t)]dW_t, &  \\
X(0)=x_0, &  \\
-dY_t=\mathbb{E}^{\prime }[f(t,X^{\prime }_t,Y^{\prime }_t,Z^{\prime
}_t,X_t,Y_t,Z_t,v_t)]dt-Z_tdW_t, \label{state equation for coupled control}
&  \\
Y_T=\Phi(X_T), &
\end{array}
\right.
\end{eqnarray}
where
\begin{eqnarray*}
\begin{array}{ll}
b: [0,T]\times \mathbb{R} \times \mathbb{R}\times \mathbb{R}^d \times
\mathbb{R}\times \mathbb{R}\times \mathbb{R}^d \times U \rightarrow \mathbb{R%
}, &  \\
\sigma: [0,T]\times \mathbb{R} \times \mathbb{R}\times \mathbb{R}^d \times
\mathbb{R}\times \mathbb{R}\times \mathbb{R}^d \times U \rightarrow \mathbb{R%
}^d, &  \\
f: [0,T]\times \mathbb{R} \times \mathbb{R}\times \mathbb{R}^d \times
\mathbb{R}\times \mathbb{R}\times \mathbb{R}^d \times U \rightarrow \mathbb{R%
}, &  \\
\Phi:\mathbb{R} \rightarrow \mathbb{R}. &
\end{array}%
\end{eqnarray*}

\noindent The expected cost function is given by:
\begin{eqnarray}
J(v(\cdot))&=&\mathbb{E}\bigg(\int_0^T\mathbb{E}^{\prime }[h(t,X^{\prime
}_t,Y^{\prime }_t,Z^{\prime }_t,X_t,Y_t,Z_t,v(t))]dt+g(X_T)+\gamma(Y(0))%
\bigg),  \label{cost functional for coupled control}
\end{eqnarray}
where
\begin{eqnarray*}
\begin{array}{ll}
g:\mathbb{R}\rightarrow \mathbb{R}, &  \\
\gamma:\mathbb{R}\rightarrow \mathbb{R}, &  \\
h: [0,T]\times \mathbb{R} \times \mathbb{R}\times \mathbb{R}^d\times \mathbb{%
R}\times \mathbb{R}\times \mathbb{R}^d \times U \rightarrow \mathbb{R}. &
\end{array}%
\end{eqnarray*}

The optimal control problem is to minimize the functional $J(\cdot)$ over $U$%
. A control that solves this problem is called optimal.

\noindent We assume:

\begin{enumerate}
\item[\textbf{(A4)}]
\begin{equation*}
\left\{
\begin{array}{ll}
\text{(i)}\ \ \ b,\sigma ,f,\Phi ,h,g\ \text{and}\ \gamma \
\mbox{are continuously
differentiable}; &  \\
\text{(ii)}\ \ \mbox{The derivatives of}\ b,\sigma ,f\ \mbox{and}\ \Phi \ %
\mbox{are bounded}; &  \\
\text{(iii)}\ \mbox{The derivatives of}\ h\ \mbox{are bounded by}\ C(1+|%
\tilde{x}|+|\tilde{y}|+|\tilde{z}|+|x|+|y|+|z|); &  \\
\text{(iv)}\ \mbox{The derivatives of}\ g\ \mbox{and}\gamma \
\mbox{with
respect to}\ x\ \mbox{and}\ y\ \mbox{are bounded by}\ C(1+|x|)\  &  \\
\ \ \ \ \ \ \mbox{and}\ C(1+|y|)\ \mbox{respectively}; &  \\
\text{(v)}\ \ \mbox{For any given admissible control}\ v(\cdot ),\
\mbox{the equation (\ref{state equation for coupled control})
satisfies (H4) and (H5)}. &
\end{array}%
\right.
\end{equation*}
\end{enumerate}

According to Theorem 3.1, for any given admissible control $v(\cdot )\in
\mathcal{U}$, there exists a unique adapted solution $%
(X_{t}^{v},Y_{t}^{v},Z_{t}^{v})$ satisfying the fully coupled mean-field
FBSDEs (\ref{state equation for coupled control}).

Let $u(\cdot)$ be an optimal control and $(X^u(\cdot),Y^u(\cdot),Z^u(\cdot))$
be the corresponding state trajectory of stochastic control system. In this
case, the corresponding adjoint equation becomes
\begin{eqnarray}
\left\{
\begin{array}{ll}
-dp(t)=\mathbb{E}^{\prime }\Big(\bar{b}_{\tilde{x}}(t)(p_t)^{\prime }+\bar{b}%
_{x}(t)p_t+\bar{\sigma}_{\tilde{x}}(t)(q_t)^{\prime }+\bar{\sigma}_{x}(t)q_t%
\Big)dt &  \\
\ \ \ \ \ \ \ \ \ \ \ \ \ \ +\mathbb{E}^{\prime }\Big(\bar{h}_{\tilde{x}}(t)
+\bar{h}_{x}(t)-\bar{f}_{\tilde{x}}(t)(Q_t)^{\prime }-\bar{f}_{x}(t)Q_t\Big)%
dt-q_tdW_t, &  \\
dQ_t=\mathbb{E}^{\prime }\Big(\bar{f}_{\tilde{y}}(t)(Q_t)^{\prime }+\bar{f}%
_{y}(t)Q_t-\bar{b}_{\tilde{y}}(t)(p_t)^{\prime }-\bar{b}_{y}(t)p_t-\bar{%
\sigma}_{\tilde{y}}(t)(q_t)^{\prime }-\bar{\sigma}_{y}(t)q_t -\bar{h}_{%
\tilde{y}}(t)-\bar{h}_{y}(t)\Big)dt &  \\
\ \ \ \ \ \ \ \ \ + \mathbb{E}^{\prime }\Big(\bar{f}_{\tilde{z}%
}(t)(Q_t)^{\prime }+\bar{f}_{z}(t)Q_t-\bar{b}_{\tilde{z}}(t)(p_t)^{\prime }-%
\bar{b}_{z}(t)p_t-\bar{\sigma}_{\tilde{z}}(t)(q_t)^{\prime }-\bar{\sigma}%
_{z}(t)q_t -\bar{h}_{\tilde{z}}(t)-\bar{h}_{z}(t)\Big)dW_t, &  \\
p_{T}=g_{x}(X^u_T)-\Phi_x(X^u_T)Q_T,\ Q_0=-\gamma_{y}(Y^u(0)), \label%
{adjoint equation for coupled control} &
\end{array}
\right.
\end{eqnarray}
in which we use the notation $\bar{\psi}(t)=\psi(t,(X^u_t)^{\prime
u}_t)^{\prime u}_t)^{\prime u}_t, Y^u_t,Z^u_t)$ for $\psi=b,\sigma,f,h$.
When the coefficients $b, \sigma$ and $f$ do not depend explicitly on $%
\omega^{\prime }$, the adjoint equation (\ref{adjoint equation for coupled
control}) reduces to the standard adjoint equation (see Shi and Wu \cite{Shi
Wu 2006}) corresponding to fully coupled FBSDE.

On the other hand, from the assumption (A4) and the fact that (\ref{state
equation for coupled control}) satisfies (H4) and (H5), we can easily verify
that this adjoint equation (\ref{adjoint equation for coupled control})
satisfies (H4) and (H6). Then, from Theorem 3.2, we know that (\ref{adjoint
equation for coupled control}) has a unique $\mathbb{F}$-adapted solution $%
(Q,p,q)$ such that
\begin{equation*}
\mathbb{E}\Big[\sup\limits_{0\leq t\leq T}|Q(t)|^{2}+\sup\limits_{0\leq
t\leq T}|p(t)|^{2}+\int_{0}^{T}|q(t)|^{2}dt\Big]<+\infty .
\end{equation*}

\noindent Define the Hamiltonian function as
\begin{eqnarray}
H(t,\tilde{x},\tilde{y},\tilde{z},x,y,z,p,q,Q,v)&=&b(t,\tilde{x},\tilde{y},%
\tilde{z},x,y,z,v)p+\sigma(t,\tilde{x},\tilde{y},\tilde{z},x,y,z,v)q  \notag
\\
&&-f(t,\tilde{x},\tilde{y},\tilde{z},x,y,z,v)Q+h(t,\tilde{x},\tilde{y},%
\tilde{z},x,y,z,v).  \label{Hamiltonian for coupled control}
\end{eqnarray}

The following two theorems, whose proof are similar to Theorem 14 and
Theorem 16, respectively and thus are omited, are the main contribution of
this section.

\begin{theorem}
(SMP in Integral Form). Under assumptions (A4), if $u(\cdot)$ is an optimal
control with state trajectory $(X^u(\cdot),Y^u(\cdot),Z^u(\cdot))$, then
there exists a pair $(p(\cdot),q(\cdot),Q(\cdot))$ of adapted processes
which satisfies (\ref{adjoint equation for coupled control}), such that
\begin{eqnarray}
\mathbb{E}\int_0^T\mathbb{E}^{\prime }\big[H_{v}\big(t,X^{\prime
}_t,Y^{\prime }_t,Z^{\prime }_t,X_t,Y_t,Z_t,p_t,q_t,Q_t,u(t)\big)%
(v(t)-u(t))]dt\geq 0,
\end{eqnarray}
$\mathbb{P}$-a.s., for all $t \in [0,T].$
\end{theorem}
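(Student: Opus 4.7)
The plan is to adapt the convex perturbation argument used for Theorem 14, modifying the intermediate lemmas to accommodate the full coupling of the state system. Fix the optimal control $u(\cdot)$ with trajectory $(X^u,Y^u,Z^u)$, pick any $v(\cdot)\in\mathcal{U}$, and form $u^\theta_t=u_t+\theta(v_t-u_t)$ for $\theta\in(0,1]$, which lies in $\mathcal{U}$ by convexity. Denote by $(X^\theta,Y^\theta,Z^\theta)$ the associated state process, whose existence and uniqueness are supplied by Theorem 7 via assumption (A4)(v).

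First I would introduce the variational system, namely the linear coupled mean-field FBSDE for $(k,m,n)$ obtained by formally differentiating \eqref{state equation for coupled control} in the direction $v-u$. Unlike in Section 4, the forward equation for $k$ now involves $(m,n)$ through $\bar b_{\tilde y},\bar b_y,\bar b_{\tilde z},\bar b_z,\bar\sigma_{\tilde y},\bar\sigma_y,\bar\sigma_{\tilde z},\bar\sigma_z$, and the terminal condition is $m_T=\Phi_x(X^u_T)k_T$; this is a fully coupled linear mean-field FBSDE whose coefficients inherit the monotonicity \textbf{(H5)} from (A4)(v), so Theorem 7 again yields a unique adapted solution $(k,m,n)$. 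Similarly, the adjoint system \eqref{adjoint equation for coupled control} is a fully coupled linear mean-field FBSDE; as remarked after Theorem 10, its coefficients satisfy \textbf{(H4)} and \textbf{(H6)}, so Theorem 10 produces a unique adapted triple $(p,q,Q)$.

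Next, defining the remainders $\tilde X^\theta_t=\theta^{-1}(X^\theta_t-X^u_t)-k_t$, $\tilde Y^\theta_t=\theta^{-1}(Y^\theta_t-Y^u_t)-m_t$, $\tilde Z^\theta_t=\theta^{-1}(Z^\theta_t-Z^u_t)-n_t$, I would establish the analogue of Lemma 13, namely
\begin{equation*}
\sup_{0\leq t\leq T}\mathbb{E}|\tilde X^\theta_t|^2+\sup_{0\leq t\leq T}\mathbb{E}|\tilde Y^\theta_t|^2+\mathbb{E}\int_0^T|\tilde Z^\theta_t|^2\,dt\longrightarrow 0,\qquad \theta\downarrow 0.
\end{equation*}
Each remainder solves a coupled mean-field FBSDE driven by integrated Taylor residuals of $b,\sigma,f$; expressing these residuals as in the proof of Lemma 13 via $\int_0^1$-integrals of partial derivatives along the segment $(X^u,Y^u,Z^u)+\lambda\theta(\tilde X^\theta+k,\tilde Y^\theta+m,\tilde Z^\theta+n)$, one shows the residuals tend to $0$ in $L^2(\Omega\times[0,T])$ by dominated convergence, using the boundedness of the derivatives (A4)(ii) and the square-integrability of $(k,m,n)$. \emph{This is the main technical obstacle}: because of full coupling, the forward estimate for $\tilde X^\theta$ cannot be obtained independently as in Section 4, but must be coupled with backward estimates for $(\tilde Y^\theta,\tilde Z^\theta)$. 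I would resolve this by applying It\^o's formula to $\tilde X^\theta_t\tilde Y^\theta_t$, invoking the monotonicity (A4)(v) together with (H5) to absorb cross terms, as in the proof of uniqueness in Theorem 7, and then closing the estimate via Gronwall.

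With these convergences and the optimality of $u$, passage to the limit in $\theta^{-1}[J(u^\theta)-J(u)]\ge 0$ yields the variational inequality
\begin{equation*}
\mathbb{E}\!\int_0^T\!\mathbb{E}'\!\Big(\bar h_{\tilde x}(t)(k_t)'+\bar h_x(t)k_t+\bar h_{\tilde y}(t)(m_t)'+\bar h_y(t)m_t+\bar h_{\tilde z}(t)(n_t)'+\bar h_z(t)n_t+\bar h_v(t)(v_t-u_t)\Big)dt
\end{equation*}
$+\,\mathbb{E}\bigl[g_x(X^u_T)k_T+\gamma_y(Y^u(0))m_0\bigr]\ge 0$, in direct parallel to Lemma 14. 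To reformulate this via the Hamiltonian, I would apply It\^o's formula to $k_tp_t+m_tQ_t$ on $[0,T]$; the boundary data $k_0=0$, $Q_0=-\gamma_y(Y^u(0))$, $m_T=\Phi_x(X^u_T)k_T$, $p_T=g_x(X^u_T)-\Phi_x(X^u_T)Q_T$ cause the endpoint terms to collapse to $\mathbb{E}[g_x(X^u_T)k_T+\gamma_y(Y^u(0))m_0]$, and the $dt$-integrand, after substituting the dynamics of $(p,q,Q)$ from \eqref{adjoint equation for coupled control} and regrouping the six partial-derivative terms of $\bar b,\bar\sigma,\bar f$ (paired with $\bar h$) against $(k,m,n)$, reproduces exactly the first variation of the Hamiltonian $H$ defined in \eqref{Hamiltonian for coupled control}. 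Combining this identity with the variational inequality above delivers $\mathbb{E}\!\int_0^T\!\mathbb{E}'[H_v(t,X'_t,Y'_t,Z'_t,X_t,Y_t,Z_t,p_t,q_t,Q_t,u_t)(v_t-u_t)]dt\ge 0$, which is the stated SMP.
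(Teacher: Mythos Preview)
Your proposal is correct and follows essentially the same route the paper indicates: the paper omits the proof of Theorem 17, stating only that it is ``similar to Theorem 14,'' and your sketch carries out precisely that adaptation---convex perturbation, a fully coupled variational FBSDE solved via Theorem 7, the convergence lemma for the remainders, the variational inequality, and the duality computation via It\^o's formula applied to $k_tp_t+m_tQ_t$. Your identification of the coupled remainder estimate as the main technical point (handled through the monotonicity of (A4)(v)) and your use of Theorem 10 for the adjoint system are exactly the additional ingredients the fully coupled setting requires.
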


\begin{theorem}
(Sufficient Conditions for the Optimality of the Control) Assume the
condition (A4) is satisfied and let $u(\cdot)\in \mathcal{U}$ with state
trajectory $(X^u_t,Y^u_t,Z^u_t)$ be given such that there exist solutions $%
(p(\cdot),q(\cdot),Q(\cdot))$ to the adjoint equation (\ref{adjoint equation
for coupled control}). Moreover, suppose the functions $g,\gamma,\Phi $ are
convex and $H(t,\tilde{x},\tilde{y},\tilde{z},x,y,z,p,q,Q,v)$ is convex in $(%
\tilde{x},\tilde{y},\tilde{z},x,y,z,v)$. Then, if
\begin{eqnarray*}
\lefteqn{\mathbb{E}%
'[H(t,(X^u_t)',(Y^u_t)',(Z^u_t)',X^u_t,Y^u_t,Z^u_t,p_t,q_t,Q_t,u(t))]} \\
&&=\min\limits_{v \in U}\mathbb{E}^{\prime u}_t)^{\prime u}_t)^{\prime
u}_t)^{\prime u}_t,Y^u_t,Z^u_t,p_t,q_t,Q_t,v)],
\end{eqnarray*}
for all $t\in [0,T]$, $\mathbb{P}$-a.s., $u$ is an optimal control of
problem (\ref{state equation for coupled control})-(\ref{cost functional for
coupled control}).
\end{theorem}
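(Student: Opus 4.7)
The plan is to mimic the structure of the proof of Theorem 16, but with the fully coupled adjoint system (\ref{adjoint equation for coupled control}) in place of (\ref{adjoint equation for control}). I would begin by writing, for an arbitrary $v(\cdot)\in\mathcal{U}$ with state trajectory $(X^v,Y^v,Z^v)$,
\begin{equation*}
J(v(\cdot))-J(u(\cdot))=\mathrm{I}+\mathrm{II}+\mathrm{III},
\end{equation*}
where $\mathrm{I}$ is the running-cost difference, $\mathrm{II}=\mathbb{E}[g(X^v_T)-g(X^u_T)]$ and $\mathrm{III}=\mathbb{E}[\gamma(Y^v(0))-\gamma(Y^u(0))]$. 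Convexity of $g$ and $\gamma$, together with the terminal conditions $p_T=g_x(X^u_T)-\Phi_x(X^u_T)Q_T$ and $Q_0=-\gamma_y(Y^u(0))$, will yield
\begin{equation*}
\mathrm{II}+\mathrm{III}\geq \mathbb{E}\big[g_x(X^u_T)(X^v_T-X^u_T)+\gamma_y(Y^u(0))(Y^v(0)-Y^u(0))\big],
\end{equation*}
and convexity of $\Phi$ will control $Y^v_T-Y^u_T=\Phi(X^v_T)-\Phi(X^u_T)\ge \Phi_x(X^u_T)(X^v_T-X^u_T)$, which lets us use the sign of $Q_T$ when rewriting $\mathbb{E}[Q_T(Y^v_T-Y^u_T)]$.

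Next I would apply It\^o's formula to $p_t(X^v_t-X^u_t)+Q_t(Y^v_t-Y^u_t)$ on $[0,T]$. This is the step where the fully coupled structure matters: the drift and diffusion of $Q$ now carry the extra terms $\bar b_{\tilde y},\bar b_y,\bar\sigma_{\tilde y},\bar\sigma_y,\bar b_{\tilde z},\bar b_y,\bar\sigma_{\tilde z},\bar\sigma_z$ present in (\ref{adjoint equation for coupled control}), and these exactly cancel the contributions coming from $b,\sigma$ depending on $(Y,Z)$ in the forward equation for $X^v-X^u$. Grouping terms according to the definition (\ref{Hamiltonian for coupled control}) of $H$, and using Fubini together with $\mathbb{E}\mathbb{E}'[\cdot]=\mathbb{E}[\cdot]$ (so that we may freely move derivatives evaluated at primed variables against the corresponding increments), the cross-computation should collapse to
\begin{equation*}
\mathbb{E}\int_0^T\mathbb{E}'\bigl[H(t,\Lambda^v_t,p_t,q_t,Q_t,v_t)-H(t,\Lambda^u_t,p_t,q_t,Q_t,u_t)-\nabla_{(\tilde x,\tilde y,\tilde z,x,y,z)}H\cdot (\Lambda^v_t-\Lambda^u_t)\bigr]dt
\end{equation*}
up to the remainder coming from $\mathrm{I}$ and the boundary terms above.

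With that identity in hand, convexity of $H$ in $(\tilde x,\tilde y,\tilde z,x,y,z,v)$ gives the subgradient inequality
\begin{equation*}
H(t,\Lambda^v_t,p_t,q_t,Q_t,v_t)-H(t,\Lambda^u_t,p_t,q_t,Q_t,u_t)\geq \nabla_{(\tilde x,\tilde y,\tilde z,x,y,z)}H\cdot(\Lambda^v_t-\Lambda^u_t)+H_v(\cdots)(v_t-u_t),
\end{equation*}
so all the derivative-in-state terms cancel and we are left with
\begin{equation*}
J(v(\cdot))-J(u(\cdot))\geq \mathbb{E}\int_0^T\mathbb{E}'\bigl[H_v(t,\Lambda^u_t,p_t,q_t,Q_t,u_t)(v_t-u_t)\bigr]dt.
\end{equation*}
Finally, the minimum-principle hypothesis on $H$, combined with the convexity of $\mathcal{U}$ and a standard convex-variation argument (consider $u_t+\varepsilon(v_t-u_t)\in\mathcal{U}$ and let $\varepsilon\downarrow 0$), implies the right-hand side is nonnegative, whence $J(v)\ge J(u)$.

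The main obstacle will be the It\^o computation in the coupled case: one has to check that every extra term produced by the $Y$- and $Z$-dependence of $b$ and $\sigma$ in the forward equation is matched by the corresponding adjoint driver in (\ref{adjoint equation for coupled control}), and, because of the mean-field structure, that the primed and unprimed partial derivatives are paired correctly via $\mathbb{E}[\mathbb{E}'[\cdot]]$-symmetry (so that, for instance, $\mathbb{E}[\mathbb{E}'[\bar b_{\tilde y}(t)(p_t)'] (Y^v_t-Y^u_t)]=\mathbb{E}[\mathbb{E}'[\bar b_{\tilde y}(t) p_t (Y^v_t-Y^u_t)']]$). Once that bookkeeping is done the rest is a direct transcription of the argument of Theorem 16.
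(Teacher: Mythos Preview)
Your proposal is correct and follows exactly the route the paper intends: the paper explicitly omits the proof of this theorem, stating that it is ``similar to Theorem 16,'' and your plan is precisely to replay the argument of Theorem~16 with the fully coupled adjoint system (\ref{adjoint equation for coupled control}) replacing (\ref{adjoint equation for control}), checking that the additional $\bar b_{\tilde y},\bar b_y,\bar\sigma_{\tilde y},\bar\sigma_y,\bar b_{\tilde z},\bar b_z,\bar\sigma_{\tilde z},\bar\sigma_z$ terms in the $Q$-dynamics cancel the $(Y,Z)$-dependence of $b$ and $\sigma$ in the It\^o computation. Your remark on the $\mathbb{E}\mathbb{E}'$-symmetry needed to pair primed and unprimed derivatives is also the same mechanism used implicitly in the paper's proof of Theorem~16.
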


\section{Maximum principle for mean-field stochastic games of FBSDEs}

In this section, we consider a class of non-zero sum differential games
where state variables are described by the system of FBSDEs of mean-field
type. Our objective is to derive necessary conditions for optimality in the
form of a stochastic maximum principle and the corresponding verification
theorem.

We always use the subscript 1 (respectively, subscript 2) to characterize
the variables corresponding to Player 1 (respectively, Player 2).

Let action space $U_i$ be a non-empty, closed and convex subset of $\mathbb{R%
}^k\ (i=1,2, k \in \mathbb{N}^{+})$. The admissible control set is defined
as
\begin{equation*}
\mathcal{U}_i=\{v_i \in L^2_{\mathcal{F}}(0,T;\mathbb{R}^{k})|v_i \in U_i,
t\in [0,T]\} \ (i=1,2).
\end{equation*}
For any $v_i(\cdot)\in \mathcal{U}_i\ (i=1,2)$, we consider the following
mean-field FBSDE:
\begin{eqnarray}
\left\{
\begin{array}{ll}
dX_t=\mathbb{E}^{\prime }[b(t,X^{\prime }_t, X_t,v_1(t),v_2(t))]dt+\mathbb{E}%
^{\prime }[\sigma(t,X^{\prime }_t,X_t,v_1(t),v_2(t))]dW_t, &  \\
X(0)=x_0, &  \\
-dY_t=\mathbb{E}^{\prime }[f(t,X^{\prime }_t,Y^{\prime }_t,Z^{\prime
}_t,X_t,Y_t,Z_t,v_1(t),v_2(t))]dt-Z_tdW_t, \label{state equation for game} &
\\
Y_T=\Phi(X_T), &
\end{array}
\right.
\end{eqnarray}
where
\begin{eqnarray*}
\begin{array}{ll}
b:[0,T]\times \mathbb{R}\times\mathbb{R} \times U_1 \times U_2 \rightarrow
\mathbb{R}, &  \\
\sigma:[0,T]\times \mathbb{R}\times\mathbb{R} \times U_1 \times U_2
\rightarrow \mathbb{R}^d, &  \\
f: [0,T]\times \mathbb{R} \times \mathbb{R}\times \mathbb{R}^d\times \mathbb{%
R}\times \mathbb{R}\times \mathbb{R}^d \times U_1 \times U_2 \rightarrow
\mathbb{R}, &  \\
\Phi:\mathbb{R} \rightarrow \mathbb{R}. &
\end{array}%
\end{eqnarray*}

\noindent Ensuring to achieve the goal $\Phi(x_T)$, Player $i \ (i=1,2)$,
who has his own benefits, aims at minimizing the following expected cost
functionals:
\begin{eqnarray}
J_i(v_1(\cdot),v_2(\cdot))&=&\mathbb{E}\int_0^T\mathbb{E}^{\prime }\big[%
h_i(t,X^{\prime }_t,Y^{\prime }_t,Z^{\prime }_t,X_t,Y_t,Z_t,v_1(t),v_2(t))%
\big]dt  \notag \\
&&+\mathbb{E}\Big(g_i(X_T)+\gamma_i(Y(0))\Big),
\end{eqnarray}
where
\begin{eqnarray*}
\begin{array}{ll}
g_i: \mathbb{R}\rightarrow \mathbb{R}\ (i=1,2), &  \\
\gamma_i: \mathbb{R}\rightarrow \mathbb{R}\ (i=1,2), &  \\
h_i: [0,T]\times \mathbb{R} \times \mathbb{R}\times \mathbb{R}^d\times
\mathbb{R}\times \mathbb{R}\times \mathbb{R}^d \times U_1 \times U_2
\rightarrow \mathbb{R} \ (i=1,2). &
\end{array}%
\end{eqnarray*}

Suppose each player hopes to minimize her/his cost functional $%
J_i(v_1(\cdot),v_2(\cdot))$ by selecting an appropriate admissible control $%
v_i(\cdot)\ (i=1,2)$. The problem is then to find a pair of admissible
controls $(u_1(\cdot),u_2(\cdot)) \in \mathcal{U}_1 \times \mathcal{U}_2$,
called a Nash equilibrium point for the non-zero sum game, such that
\begin{eqnarray}
\left\{
\begin{array}{ll}
J_1(u_1(\cdot),u_2(\cdot))=\min \limits_{v_1(\cdot) \in \mathcal{U}_1}
J_1(v_1(\cdot),u_2(\cdot)) &  \\
J_2(u_1(\cdot),u_2(\cdot))=\min \limits_{v_2(\cdot) \in \mathcal{U}_2}
J_2(u_1(\cdot),v_2(\cdot)) &
\end{array}
\right.  \label{equilibrium point for game}
\end{eqnarray}
We call the problem above a forward-backward non-zero sum stochastic
differential game of mean-field type, where the word ``forward-backward"
means that the game system is described by a FBSDE and the reason for
calling ``mean-field" is the coefficients of the state equation and cost
functionals depend on the law of the state process. For simplicity, we
denote it by \itshape Problem \upshape(\itshape FBNM\upshape).

We assume that the following hypothesis holds.

\begin{enumerate}
\item[\textbf{(A5)}] (i) The given functions $b(t,\tilde{x}%
,x,v_{1},v_{2}),\sigma (t,\tilde{x},x,v_{1},v_{2}),f(t,\tilde{x},\tilde{y},%
\tilde{z},x,y,z,v_{1},v_{2}),\Phi (x),\newline
h_{i}(t,\tilde{x},\tilde{y},\tilde{z},x,y,z,v_{1},v_{2}),g_{i}(x)$ and $%
\gamma _{i}(y)\ (i=1,2)$ are continuously differentiable with respect to all
of the components in these functions.

(ii) All the derivatives in (i) are Lipschitz continuous and bounded.
\end{enumerate}

For any admissible controls $v_1(\cdot)$ and $v_2(\cdot)$, we suppose that
(A5) hold. Then we know mean-field FBSDE (\ref{state equation for game})
admits a unique solution $(x^{v_1,v_2}(\cdot),y^{v_1,v_2}(%
\cdot),z^{v_1,v_2}(\cdot))$ by Lemma 2.2 and Lemma 2.3, which is called the
corresponding trajectory.

\subsection{A Pontryagin's stochastic maximum principle}

Let $(u_1(\cdot),u_2(\cdot))$ be a Nash equilibrium point of Problem (FBNM)
and $(X(\cdot),Y(\cdot),Z(\cdot))$ be the corresponding state trajectory of
game system. For any given $v_i(\cdot) \in \mathcal{U}_i\ (i=1,2)$, since $%
\mathcal{U}_i$ is convex, then $u_i^{\theta}(\cdot)=u_i(\cdot)+\theta
(v_i(\cdot)-u_i(\cdot))\in \mathcal{U}_i \ (i=1,2),$ $\forall\ \theta \in
[0,1]$.

We introduce the short-hand notation which will be in force in this section
\begin{eqnarray*}
\bar{b}(t)&=&b(t,X^{\prime }_t,X_t,u_1(t),u_2(t)), \\
\bar{\sigma}(t)&=&\sigma(t,X^{\prime }_t,X_t,u_1(t),u_2(t)), \\
\bar{f}(t)&=& f(t,X^{\prime }_t,Y^{\prime }_t,Z^{\prime
}_t,X_t,Y_t,Z_t,u_1(t),u_2(t)), \\
\bar{h}_i(t)&=& h_i(t,X^{\prime }_t,Y^{\prime }_t,Z^{\prime
}_t,X_t,Y_t,Z_t,u_1(t),u_2(t)).
\end{eqnarray*}

\noindent Let $(k(\cdot),m(\cdot),n(\cdot))$ be the solution of the
following variational equation which is a linear mean-field FBSDE:

\begin{eqnarray*}
&&\left\{
\begin{array}{ll}
dk_t=\mathbb{E}^{\prime }\Big[\bar{b}_{\tilde{x}}(t)(k_t)^{\prime }+\bar{b}%
_x(t)k_t+\bar{b}_{v_1}(t)(v_1(t)-u_1(t))+\bar{b}_{v_2}(t)(v_2(t)-u_2(t))\Big]%
dt &  \\
\ \ \ \ \ \ \ +\mathbb{E}^{\prime }\Big[\bar{\sigma}_{\tilde{x}%
}(t)(k_t)^{\prime }+\bar{\sigma}_x(t)k_t+\bar{\sigma}%
_{v_1}(t)(v_1(t)-u_1(t))+\bar{\sigma}_{v_2}(t)(v_2(t)-u_2(t))\Big]dW_t, %
\nonumber &  \\
k_{0}=0, &
\end{array}
\right. \\
&&\left\{
\begin{array}{ll}
dm_t=-\mathbb{E}^{\prime }[\bar{f}_{\tilde{x}}(t)(k_t)^{\prime }+\bar{f}%
_{x}(t)k_t+\bar{f}_{\tilde{y}}(t)(m_t)^{\prime }+\bar{f}_{y}(t)m_t +\bar{f}_{%
\tilde{z}}(t)(n_t)^{\prime }+\bar{f}_{z}(t)n_t &  \\
\ \ \ \ \ \ \ \ \ \ \ \ \ \ \ +\bar{f}_{v_1}(t)(v_1(t)-u_1(t))+\bar{f}%
_{v_2}(t)(v_2(t)-u_2(t))]dt+n_tdW_t, &  \\
m_T=k_T\Phi_x(X_T), &
\end{array}
\right.
\end{eqnarray*}

The adjoint equation corresponding to state trajectory $(X^{v_1,v_2}(%
\cdot),Y^{v_1,v_2}(\cdot),Z^{v_1,v_2}(\cdot))$, which is a mean-field FBSDE
and whose solution is denoted by $(p^{v_1,v_2}_i(\cdot),q^{v_1,v_2}_i(%
\cdot),Q^{v_1,v_2}_i(\cdot))$, satisfies
\begin{eqnarray}
\left\{
\begin{array}{ll}
-dp^{v_1,v_2}_i(t)=\mathbb{E}^{\prime }\Big(\bar{b}_{\tilde{x}%
}(t)(p^{v_1,v_2}_i(t))^{\prime }+\bar{b}_{x}(t)p^{v_1,v_2}_i(t)+\bar{\sigma}%
_{\tilde{x}}(t)(q^{v_1,v_2}_i(t))^{\prime }+\bar{\sigma}%
_{x}(t)p^{v_1,v_2}_i(t)\Big)dt &  \\
\ \ \ \ \ \ \ \ \ \ \ \ \ \ \ \ \ \ +\mathbb{E}^{\prime }\Big(\bar{h}_{i%
\tilde{x}}(t) +\bar{h}_{ix}(t)-\bar{f}_{\tilde{x}}(t)(Q^{v_1,v_2}_i(t))^{%
\prime }-\bar{f}_{x}(t)Q^{v_1,v_2}_i(t)\Big)dt-q^{v_1,v_2}_i(t)dW_t, &  \\
dQ^{v_1,v_2}_i(t)=\mathbb{E}^{\prime }\Big(\bar{f}_{\tilde{y}%
}(t)(Q^{v_1,v_2}_i(t))^{\prime }+\bar{f}_{y}(t)Q^{v_1,v_2}_i(t)-\bar{h}_{i%
\tilde{y}}(t) -\bar{h}_{iy}(t)\Big)dt &  \\
\ \ \ \ \ \ \ \ \ \ \ \ \ \ \ \ \ \ + \mathbb{E}^{\prime }\Big(\bar{f}_{%
\tilde{z}}(t)(Q^{v_1,v_2}_i(t))^{\prime }+\bar{f}_{z}(t)Q^{v_1,v_2}_i(t)-%
\bar{h}_{i\tilde{z}}(t) -\bar{h}_{iz}(t)\Big)dW_t, &  \\
p^{v_1,v_2}_i(T)=g_{x}(X_T)-\Phi_{x}(X_T)Q^{v_1,v_2}_i(t),\
Q_0=-\gamma_{y}(Y(0)), \label{adjoint equation for game} &
\end{array}
\right.
\end{eqnarray}
where $h_{ix}$ denotes the partial derivatives of $h_i$ with respect to $x$.

By (A5) and Lemma 2.2, we can easily verify that the linear FBSDE of
mean-field type (\ref{adjoint equation for game}) admits a unique solution $%
(p^{v_1,v_2}_i(\cdot),q^{v_1,v_2}_i(\cdot),Q^{v_1,v_2}_i(\cdot))$.

\noindent The Hamiltonian function associated with random variables is
defined as follows:
\begin{eqnarray}
H_i(t,\tilde{x},\tilde{y},\tilde{z},x,y,z,p_i,q_i,Q_i,v_1,v_2)&=&p_i(t)b(t,%
\tilde{x},x,v_1,v_2)+q_i(t)\sigma(t,\tilde{x},x,v_1,v_2)  \notag \\
&&-f(t,\tilde{x},\tilde{y},\tilde{z},x,y,z,v_1,v_2)Q_i(t)  \notag \\
&&+h_i(t,\tilde{x},\tilde{y},\tilde{z},x,y,z,v_1,v_2).
\label{Hamiltonian for game}
\end{eqnarray}

Fix $u_2(\cdot)$ (respectively, $u_1(\cdot)$), to minimize the cost
functional $J_1(v_1(\cdot),u_2(\cdot))$ (respectively, $J_2(u_1(\cdot),v_2(%
\cdot))$) subject to (\ref{state equation for game}) over $\mathcal{U}_1$
(respectively, $\mathcal{U}_2$) is an optimal control problem of mean-field
FBSDEs. Following the idea developed in Section 4, it is not difficult to
analyze the game problem. Thus, we omit the detailed deduction and only
state the main result for simplicity.

\begin{theorem}
{\upshape (Stochastic Maximum Principle for SDGs)} Suppose (A5) hold. Let $%
(u_1(\cdot),u_2(\cdot))$ be a Nash equilibrium point for our stochastic game
problem (FBNM), $(X(\cdot),Y(\cdot),Z(\cdot))$ be the corresponding
trajectory and $(p^{v_1,v_2}_i(\cdot),q^{v_1,v_2}_i(\cdot),Q^{v_1,v_2}_i(%
\cdot))$ be the solution of adjoint equation (\ref{adjoint equation for game}%
). Then we have
\begin{eqnarray*}
&&\mathbb{E}\int_0^T\mathbb{E}^{\prime }\big[H_{1v_1}\big(t,X^{\prime
}_t,Y^{\prime }_t,Z^{\prime
}_t,X_t,Y_t,Z_t,p_1(t),q_1(t),Q_1(t),u_1(t),u_2(t)\big)(v_1(t)-u_1(t))]dt%
\geq 0, \\
&&\mathbb{E}\int_0^T\mathbb{E}^{\prime }\big[H_{2v_2}\big(t,X^{\prime
}_t,Y^{\prime }_t,Z^{\prime
}_t,X_t,Y_t,Z_t,p_2(t),q_2(t),Q_2(t),u_1(t),u_2(t)\big)(v_2(t)-u_2(t))]dt%
\geq 0, \\
&&\ \ \ \ \ \ \ \ \ \ \ \ \ \ \ \ \forall \ (v_1,v_2) \in U_1 \times U_2, \
\ \ \ \ a.e.t \in [0,T],\ a.s.,
\end{eqnarray*}
where the Hamiltonian function $H_i$ is defined by (\ref{Hamiltonian for
game}).
\end{theorem}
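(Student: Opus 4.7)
The plan is to reduce the non-zero sum game problem to two separate mean-field FBSDE control problems, one for each player, by exploiting the very definition of a Nash equilibrium point in \eqref{equilibrium point for game}. Namely, if we freeze $u_2(\cdot)$ and regard $J_1(v_1(\cdot),u_2(\cdot))$ as a cost functional depending only on $v_1(\cdot)\in\mathcal{U}_1$, then $u_1(\cdot)$ is an optimal control for this single-player mean-field forward–backward stochastic control problem with state equation \eqref{state equation for game}. Symmetrically, freezing $u_1(\cdot)$ makes $u_2(\cdot)$ optimal for the analogous Player-2 control problem. The whole machinery of Section~4, in particular the convex perturbation, the variational equations, the variational inequality and the adjoint-based SMP in Theorem~14, can then be applied verbatim twice.

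Concretely, for Player~1 I would introduce the convex perturbation $u_1^\theta(\cdot)=u_1(\cdot)+\theta(v_1(\cdot)-u_1(\cdot))$ (with $u_2(\cdot)$ fixed), denote by $(X^\theta,Y^\theta,Z^\theta)$ the perturbed trajectory, and set $\tilde X^\theta=\theta^{-1}(X^\theta-X)-k^{(1)}$, and similarly for $\tilde Y^\theta,\tilde Z^\theta$, where $(k^{(1)},m^{(1)},n^{(1)})$ solves the variational mean-field FBSDE given above Theorem~19 with $v_2(t)-u_2(t)\equiv 0$. The Lipschitz and boundedness hypotheses in (A5) together with Lemma~2 on mean-field BSDEs and standard estimates on McKean–Vlasov SDEs yield the convergences $\sup_{t}\mathbb{E}|\tilde X^\theta_t|^2+\sup_t\mathbb{E}|\tilde Y^\theta_t|^2+\sup_t\mathbb{E}|\tilde Z^\theta_t|^2\to 0$, exactly as in Lemma~12. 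Dividing the inequality $J_1(u_1^\theta,u_2)\ge J_1(u_1,u_2)$ by $\theta>0$ and letting $\theta\downarrow 0$ produces the variational inequality analogous to \eqref{variational inequality for control}, involving the derivatives of $\bar h_1$, $g_1$ and $\gamma_1$ tested against $(k^{(1)},m^{(1)},n^{(1)})$.

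The adjoint equation \eqref{adjoint equation for game} for $i=1$ is a linear mean-field FBSDE with bounded coefficients and is therefore uniquely solvable by Lemma~2 and Theorem~4.1 of \cite{MFBSDE2}. I would then apply It\^o's formula to $k^{(1)}_tp^{u_1,u_2}_1(t)+m^{(1)}_tQ^{u_1,u_2}_1(t)$ on $[0,T]$ exactly as in the proof of Theorem~14, using the terminal conditions $p_1(T)=g_{1x}(X_T)-\Phi_x(X_T)Q_1(T)$ and $Q_1(0)=-\gamma_{1y}(Y(0))$ together with the Fubini-type identity $\mathbb{E}[\mathbb{E}'[\cdot]]=\mathbb{E}[\cdot]$ for mean-field terms, to cancel every cross term coming from the derivatives of $\bar b,\bar\sigma,\bar f,\bar h_1$ in $(\tilde x,x,\tilde y,y,\tilde z,z)$. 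The only surviving terms will be those containing $\bar b_{v_1},\bar\sigma_{v_1},\bar f_{v_1}$ and $\bar h_{1v_1}$, whose combination is precisely $H_{1v_1}$. Substituting into the variational inequality yields the first announced inequality. Repeating the argument with the role of the players exchanged (perturb only $u_2$, keep $u_1$ fixed, use the adjoint $(p_2,q_2,Q_2)$) yields the second.

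The main obstacle, and the only nontrivial point, is bookkeeping the mean-field cross terms $\mathbb{E}'[\,\cdot\,]$ in the Itô computation: one must carefully exploit the symmetry $\mathbb{E}[\mathbb{E}'[\phi(X',X)\,\psi(X)]]=\mathbb{E}[\mathbb{E}'[\phi(X,X')]\,\psi(X)]$ to pair, say, $\bar b_{\tilde x}(t)(k_t)'$ in the variational equation with $\bar b_{\tilde x}(t)(p_1(t))'$ in the adjoint equation and collapse them into ordinary expectations, just as is done in the proof of Theorem~14. Once this bookkeeping is carried through, no new phenomenon arises and the proof is complete; this is why we omit the detailed deduction and merely state Theorem~19.
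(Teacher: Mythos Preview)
Your proposal is correct and follows precisely the approach the paper itself indicates: the paper explicitly says to fix $u_2(\cdot)$ (respectively $u_1(\cdot)$), recognize the resulting minimization of $J_1(v_1(\cdot),u_2(\cdot))$ (respectively $J_2(u_1(\cdot),v_2(\cdot))$) as a single-player mean-field FBSDE control problem, and apply the machinery of Section~4, after which it omits the detailed deduction. Your outlined steps---convex perturbation, variational equations, the convergence lemma, the variational inequality, and the It\^o computation with the adjoint processes---are exactly the ingredients of Theorem~14 applied twice, so nothing further is needed.
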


\subsection{Sufficient conditions for maximum principle}

We will establish the sufficient maximum principle (also called verification
theorem) of Problem (FBNM).

\begin{theorem}
{\upshape (Sufficient Conditions for the equilibrium point of Problem (FBNM))%
} Let (A5) hold and suppose that $(u_1(\cdot),u_2(\cdot))\in \mathcal{U}%
_1\times \mathcal{U}_2$ with state trajectory $(X_t,Y_t,Z_t)$ satisfies:
\begin{eqnarray*}
\lefteqn{\mathbb{E}%
'[H_1(t,X'_t,Y'_t,Z'_t,X_t,Y_t,Z_t,p_1^{u_1,u_2}(t),q_1^{u_1,u_2}(t),Q_1^{u_1,u_2}(t),u_1(t),u_2(t))]%
} \\
&&=\min\limits_{v_1 \in U_1}\mathbb{E}^{\prime }[H_1(t,X^{\prime
}_t,Y^{\prime }_t,Z^{\prime
}_t,X_t,Y_t,Z_t,p_1^{u_1,u_2}(t),q_1^{u_1,u_2}(t),Q_1^{u_1,u_2}(t),v_1,u_2(t))],
\\
\lefteqn{\mathbb{E}%
'[H_2(t,X'_t,Y'_t,Z'_t,X_t,Y_t,Z_t,p_2^{u_1,u_2}(t),q_2^{u_1,u_2}(t),Q_2^{u_1,u_2}(t),u_1(t),u_2(t))]%
} \\
&&=\min\limits_{v_2 \in U_2}\mathbb{E}^{\prime }[H_2(t,X^{\prime
}_t,Y^{\prime }_t,Z^{\prime
}_t,X_t,Y_t,Z_t,p_2^{u_1,u_2}(t),q_2^{u_1,u_2}(t),Q_2^{u_1,u_2}(t),u_1(t),v_2)],
\end{eqnarray*}
for all $t\in [0,T]$, where $%
p_i^{u_1,u_2}(t),q_i^{u_1,u_2}(t),Q_i^{u_1,u_2}(t)$ is the solution of
adjoint equation (\ref{adjoint equation for game}). We further assume that
the functions $\Phi(x), g_i(x), \gamma_i(y)$ and Hamiltonian function $H_i\
(i=1,2)$ are convex in $(\tilde{x},\tilde{y},\tilde{z},x,y,z,v_1,v_2)$.
Then, $(u_1(\cdot),u_2(\cdot))$ is an equilibrium point of problem (\ref%
{state equation for game})-(\ref{equilibrium point for game}).
\end{theorem}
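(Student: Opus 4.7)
The plan is to reduce the equilibrium assertion to two applications of the single--player sufficient maximum principle (Theorem 16), one for each player. That is, fixing $u_2(\cdot)$, I will show that $u_1(\cdot)$ minimizes $J_1(v_1(\cdot),u_2(\cdot))$ over $v_1\in \mathcal{U}_1$; and symmetrically, fixing $u_1(\cdot)$, that $u_2(\cdot)$ minimizes $J_2(u_1(\cdot),v_2(\cdot))$ over $v_2\in\mathcal{U}_2$. Together, these two inequalities are precisely the definition (\ref{equilibrium point for game}) of a Nash equilibrium point.

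First I would fix $u_2(\cdot)$ and pick an arbitrary $v_1(\cdot)\in\mathcal{U}_1$. Denote the state trajectory associated with $(v_1,u_2)$ by $(X^{v_1}_t,Y^{v_1}_t,Z^{v_1}_t)$ and split
\begin{equation*}
J_1(v_1(\cdot),u_2(\cdot))-J_1(u_1(\cdot),u_2(\cdot))=\mathrm{I}+\mathrm{II}+\mathrm{III},
\end{equation*}
where $\mathrm{I}$ collects the running--cost difference, $\mathrm{II}=\mathbb{E}[g_1(X^{v_1}_T)-g_1(X_T)]$, and $\mathrm{III}=\mathbb{E}[\gamma_1(Y^{v_1}(0))-\gamma_1(Y(0))]$. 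Using the convexity of $g_1$ and $\gamma_1$ and the terminal/initial conditions of the adjoint system (\ref{adjoint equation for game}) for player $1$, one bounds
\begin{equation*}
\mathrm{II}+\mathrm{III}\ \geq\ \mathbb{E}\bigl[g_{1x}(X_T)(X^{v_1}_T-X_T)+\gamma_{1y}(Y(0))(Y^{v_1}(0)-Y(0))\bigr].
\end{equation*}
Combining this with the convexity of $\Phi$ (which controls $Y^{v_1}_T-Y_T$ in terms of $X^{v_1}_T-X_T$) and then applying It\^o's formula to $p_1^{u_1,u_2}(t)(X^{v_1}_t-X_t)+Q_1^{u_1,u_2}(t)(Y^{v_1}_t-Y_t)$ replicates verbatim the calculation in the proof of Theorem 16. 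After substituting the drift/diffusion coefficients of (\ref{state equation for game}) and the dynamics of the adjoint (\ref{adjoint equation for game}), and using Fubini to interchange $\mathbb{E}$ and $\mathbb{E}'$, all ``derivative'' terms reassemble into the Hamiltonian $H_1$, yielding
\begin{equation*}
J_1(v_1,u_2)-J_1(u_1,u_2)\ \geq\ \mathbb{E}\!\int_0^T\!\mathbb{E}'\bigl[\bar{H}_1(t;v_1,u_2)-\bar{H}_1(t;u_1,u_2)-\langle \nabla_{(\tilde x,\tilde y,\tilde z,x,y,z)}\bar{H}_1,\Delta_t\rangle\bigr]dt,
\end{equation*}
where $\bar{H}_1(t;\cdot)$ is $H_1$ evaluated along the $(u_1,u_2)$--trajectory and adjoints, and $\Delta_t$ is the vector of primed and unprimed state differences.

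By the assumed convexity of $H_1$ in $(\tilde x,\tilde y,\tilde z,x,y,z,v_1,v_2)$, the Clarke subgradient inequality at the $(u_1,u_2)$--trajectory gives
\begin{equation*}
\bar{H}_1(t;v_1,u_2)-\bar{H}_1(t;u_1,u_2)\ \geq\ \langle \nabla_{(\tilde x,\tilde y,\tilde z,x,y,z)}\bar{H}_1,\Delta_t\rangle+\bar{H}_{1v_1}(t)(v_1(t)-u_1(t)),
\end{equation*}
so the integrand collapses to $\mathbb{E}'[\bar{H}_{1v_1}(t)(v_1(t)-u_1(t))]$. The minimum condition on $\mathbb{E}'[H_1]$ in the variable $v_1$, combined with the convexity of $U_1$ and the admissibility of $u_1+\varepsilon(v_1-u_1)$, forces this one--sided directional derivative to be nonnegative $dt\otimes dP$--a.e.; hence $J_1(v_1,u_2)\geq J_1(u_1,u_2)$. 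An identical argument with the roles of the two players swapped, using the adjoint pair $(p_2^{u_1,u_2},q_2^{u_1,u_2},Q_2^{u_1,u_2})$ and the convexity/minimum condition for $H_2$, gives $J_2(u_1,v_2)\geq J_2(u_1,u_2)$ for every $v_2\in\mathcal{U}_2$.

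The main obstacle I anticipate is purely bookkeeping rather than conceptual: when expanding the It\^o differential of $p_i(X^{v_i}-X)+Q_i(Y^{v_i}-Y)$ one must keep track of both the $\omega'$--integrated mean--field derivatives $\bar\phi_{\tilde x},\bar\phi_{\tilde y},\bar\phi_{\tilde z}$ appearing in (\ref{adjoint equation for game}) and their ``unprimed'' counterparts, and make sure that after Fubini every gradient block reassembles into the six partial derivatives of $H_i$. Once that identification is done cleanly, convexity of $H_i$ in the extended variable block and the pointwise minimality in $v_i$ close the argument exactly as in Theorem 16.
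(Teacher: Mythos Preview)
Your proposal is correct and follows exactly the paper's own approach: the paper's proof simply invokes the argument of Theorem 16 twice (once with $u_2$ fixed to get $J_1(v_1,u_2)\geq J_1(u_1,u_2)$, and once with $u_1$ fixed for player 2) and concludes. If anything, you have spelled out more of the It\^o/convexity bookkeeping than the paper bothers to record.
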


\begin{proof}
From the proof of Theorem 16, we affirm that
\begin{equation*}
J_1(v_1(\cdot),u_2(\cdot))-J_1(u_1(\cdot),u_2(\cdot)) \geq 0
\end{equation*}
holds for any $v_1(\cdot) \in \mathcal{U}_1,$ and
\begin{equation*}
J_2(u_1(\cdot),u_2(\cdot))=\min\limits_{v_2(\cdot)\in \mathcal
{U}_2}J_2(u_1(\cdot),v_2(\cdot))
\end{equation*}
holds for any $v_2(\cdot) \in \mathcal{U}_2.$ Hence, we draw the
desired conclusion.
\end{proof}

\begin{remark}
Note that if Eq. (\ref{state equation for game}) does not include the
``forward'' part and without the influence of $\omega^{\prime }$, then the
stochastic game problem and the corresponding conclusion reduce to the case
introduced by Wang and Yu \cite{Wang Yu}.
\end{remark}

\section{Maximum principle for Mean-field stochastic games of fully coupled
FBSDEs}

In this section, we study the mean-field games of fully coupled FBSDEs. That
is, the state equation is characterized by following fully coupled FBSDEs:
\begin{eqnarray}
\left\{
\begin{array}{ll}
dX_t=\mathbb{E}^{\prime }[b(t,X^{\prime }_t,Y^{\prime }_t,Z^{\prime
}_t,X_t,Y_t,Z_t,v_1(t),v_2(t))]dt+\mathbb{E}^{\prime }[\sigma(t,X^{\prime
}_t,Y^{\prime }_t,Z^{\prime }_t,X_t,Y_t,Z_t,v_1(t),v_2(t))]dW_t, &  \\
X(0)=x_0, &  \\
-dY_t=\mathbb{E}^{\prime }[f(t,X^{\prime }_t,Y^{\prime }_t,Z^{\prime
}_t,X_t,Y_t,Z_t,v_1(t),v_2(t))]dt-Z_tdW_t, \label{state equation for coupled
game} &  \\
Y_T=\Phi(X_T), &
\end{array}
\right.
\end{eqnarray}
where
\begin{eqnarray*}
\begin{array}{ll}
b: [0,T]\times \mathbb{R} \times \mathbb{R}\times \mathbb{R}^d \times
\mathbb{R}\times \mathbb{R}\times \mathbb{R}^d\times U_1 \times U_2
\rightarrow \mathbb{R}, &  \\
\sigma: [0,T]\times \mathbb{R} \times \mathbb{R}\times \mathbb{R}^d \times
\mathbb{R}\times \mathbb{R}\times \mathbb{R}^d \times U_1 \times U_2
\rightarrow \mathbb{R}^d, &  \\
f: [0,T]\times \mathbb{R} \times \mathbb{R}\times \mathbb{R}^d \times
\mathbb{R}\times \mathbb{R}\times \mathbb{R}^d\times U_1 \times U_2
\rightarrow \mathbb{R}, &  \\
\Phi:\mathbb{R} \rightarrow \mathbb{R}. &
\end{array}%
\end{eqnarray*}
For any admissible $v_i(\cdot)\in \mathcal{U}_i\ (i=1,2)$, if conditions
(H4) and (H5) hold, the fully coupled mean-field FBSDE (\ref{state equation
for coupled game}) has a unique $\mathbb{F}$-adapted solution $%
(X^{v_1,v_2}(\cdot),Y^{v_1,v_2}(\cdot),Z^{v_1,v_2}(\cdot))$ according to
Theorem 3.1 .

For Player $i \ (i=1,2)$, the expected cost functionals is defined as
follows:
\begin{eqnarray}
J_i(v_1(\cdot),v_2(\cdot))&=&\mathbb{E}\int_0^T\mathbb{E}^{\prime }\big[%
h_i(t,X^{\prime }_t,Y^{\prime }_t,Z^{\prime }_t,X_t,Y_t,Z_t,v_1(t),v_2(t))%
\big]dt  \notag \\
&&+\mathbb{E}\Big(g_i(X_T)+\gamma_i(Y(0))\Big),
\end{eqnarray}
where
\begin{eqnarray*}
\begin{array}{ll}
g_i: \mathbb{R}\rightarrow \mathbb{R}\ (i=1,2), &  \\
\gamma_i: \mathbb{R}\rightarrow \mathbb{R}\ (i=1,2), &  \\
h_i: [0,T]\times \mathbb{R} \times \mathbb{R}\times \mathbb{R}^d \times
\mathbb{R}\times \mathbb{R}\times \mathbb{R}^d \times U_1 \times U_2
\rightarrow \mathbb{R},\ \ (i=1,2). &
\end{array}%
\end{eqnarray*}

\noindent Each player, having the same goal $\Phi(X_T)$, aims at minimizing
her/his cost functional $J_i(v_1(\cdot),v_2(\cdot))$ by selecting an
appropriate admissible control $v_i(\cdot)\in \mathcal{U}_i \ (i=1,2)$. The
problem is to find a Nash equilibrium point $(u_1(\cdot),u_2(\cdot)) \in
\mathcal{U}_1 \times \mathcal{U}_2$ for the non-zero sum game, such that
\begin{eqnarray}
\left\{
\begin{array}{ll}
J_1(u_1(\cdot),u_2(\cdot))=\min \limits_{v_1(\cdot) \in \mathcal{U}_1}
J_1(v_1(\cdot),u_2(\cdot)), &  \\
J_2(u_1(\cdot),u_2(\cdot))=\min \limits_{v_2(\cdot) \in \mathcal{U}_2}
J_2(u_1(\cdot),v_2(\cdot)). &
\end{array}
\right.
\end{eqnarray}
For simplicity, we denote the problem above by \itshape Problem \upshape(%
\itshape CFBNM\upshape).

In order to give the maximum principle, we assume that the following
hypothesis holds.
\begin{equation*}
\left\{
\begin{array}{ll}
\text{(i)}\ \ \ b,\sigma,f,\Phi,h_i,g_i\ \text{and}\ \gamma_i\
\mbox{are continuously
differentiable}; &  \\
\text{(ii)}\ \ \mbox{The derivatives of}\ b, \sigma,f \ \mbox{and} \ \Phi\ %
\mbox{are bounded}; &  \\
\text{(iii)}\ \mbox{The derivatives of}\ h_i\ \mbox{are bounded
by} \ C(1+|\tilde{x}|+|\tilde{y}|+|\tilde{z}|+|x|+|y|+|z|); &  \\
\text{(iv)}\ \mbox{The derivatives of}\ g_i\ \mbox{and} \gamma_i \
\mbox{with
respect to}\ x \ \mbox{and}\ y\ \mbox{are bounded by}\ C(1+|x|)\  &  \\
\ \ \ \ \ \ \mbox{and}\ C(1+|y|)\ \mbox{respectively}; &  \\
\text{(v)}\ \ \mbox{For any given pair of control}\ (v_1(\cdot),v_2(\cdot)),
\
\mbox{equation (\ref{state equation for
coupled game}) satisfies (H4) and (H5)}. &
\end{array}
\right. \leqno(\text{\textbf{A6}})
\end{equation*}

Let $(u_1(\cdot),u_2(\cdot))$ be a Nash equilibrium point of Problem (CFBNM)
and let $(X(\cdot),Y(\cdot),Z(\cdot))$ be the corresponding trajectory of
game system. In this fully coupled case, the adjoint equation, different
from the case in Section 6, has the form: for $i=1,2$,
\begin{eqnarray}
\left\{
\begin{array}{ll}
-dp_i(t)=\mathbb{E}^{\prime }\Big(\bar{b}_{\tilde{x}}(t)(p_i(t))^{\prime }+%
\bar{b}_{x}(t)p_i(t)+\bar{\sigma}_{\tilde{x}}(t)(q_i(t))^{\prime }+\bar{%
\sigma}_{x}(t)q_i(t)\Big)dt &  \\
\ \ \ \ \ \ \ \ \ \ \ \ \ \ +\mathbb{E}^{\prime }\Big(\bar{h}_{i\tilde{x}%
}(t) +\bar{h}_{ix}(t)-\bar{f}_{\tilde{x}}(t)(Q_i(t))^{\prime }-\bar{f}%
_{x}(t)Q_i(t)\Big)dt-q_i(t)dW_t, &  \\
dQ_i(t)=\mathbb{E}^{\prime }\Big(\bar{f}_{\tilde{y}}(t)(Q_i(t))^{\prime }+%
\bar{f}_{y}(t)Q_i(t)-\bar{b}_{\tilde{y}}(t)(p_i(t))^{\prime }-\bar{b}%
_{y}(t)p_i(t)-\bar{\sigma}_{\tilde{y}}(t)(q_i(t))^{\prime } &  \\
\ \ \ \ \ \ \ \ \ \ \ \ \ \ \ -\bar{\sigma}_{y}(t)q_i(t)-\bar{h}_{i\tilde{y}%
}(t)-\bar{h}_{iy}(t)\Big)dt+\mathbb{E}^{\prime }\Big(\bar{f}_{\tilde{z}%
}(t)(Q_i(t))^{\prime }+\bar{f}_{z}(t)Q_i(t)-\bar{b}_{z}(t)p_i(t) &  \\
\ \ \ \ \ \ \ \ \ \ \ \ \ \ \ -\bar{b}_{\tilde{z}}(t)(p_i(t))^{\prime }-\bar{%
\sigma}_{\tilde{z}}(t)(q_i(t))^{\prime }-\bar{\sigma}_{z}(t)q_i(t)-\bar{h}_{i%
\tilde{z}}(t) -\bar{h}_{iz}(t)\Big)dW_t, &  \\
p_i(T)=g_{ix}(X_T)-\Phi_{x}(X_T)Q_i(T),\ Q_i(0)=-\gamma_{iy}(Y(0)), \label%
{adjoint equation for coupled game} &
\end{array}
\right.
\end{eqnarray}
with $\bar{\psi}=\psi(t,X^{\prime }_t,Y^{\prime }_t,Z^{\prime
}_t,X_t,Y_t,Z_t,u_1(t),u_2(t))$, for $\psi=b,\sigma,f,h_1,h_2.$

This is a linear fully coupled mean-field FBSDE with bounded coefficients
under assumption (A6). It is easy to know that adjoint equation (\ref%
{adjoint equation for coupled game}) satisfies (H4) and (H6) since condition
(A6) and equation (\ref{state equation for coupled control}) satisfying (H4)
and (H5). From Theorem 3.2, this equation has a unique $\mathbb{F}$-adapted
solution $(p_i(\cdot),q_i(\cdot),Q_i(\cdot))$ such that
\begin{equation*}
\mathbb{E}\Big[\sup\limits_{0\leq t\leq T}|Q_i(t)|^2+\sup\limits_{0\leq
t\leq T}|p_i(t)|^2+\int_0^T|q_i(t)|^2dt\Big]<+\infty,\ \ \ \ \ \ \ \ \ \
(i=1,2).
\end{equation*}

\noindent We define the Hamiltonian function $H_i:[0,T]\times \mathbb{R}%
\times \mathbb{R}\times \mathbb{R}^d\times \mathbb{R}\times \mathbb{R}\times
\mathbb{R}^d \times \mathbb{R}\times \mathbb{R}^d \times \mathbb{R}\times
\mathbb{R}^k\times \mathbb{R}^k \rightarrow \mathbb{R}$ as
\begin{eqnarray}
\lefteqn{H_i(t,\tilde{x},\tilde{y},\tilde{z},x,y,z,p_i,q_i,Q_i,v_1,v_2)}
\notag \\
&=&p_i(t)b(t,\tilde{x},\tilde{y},\tilde{z},x,y,z,v_1,v_2)+q_i(t)\sigma(t,%
\tilde{x},\tilde{y},\tilde{z},x,y,z,v_1,v_2)
\label{Hamiltonian
for coupled game} \\
&&-f(t,\tilde{x},\tilde{y},\tilde{z},x,y,z,v_1,v_2)Q_i(t) +h_i(t,\tilde{x},%
\tilde{y},\tilde{z},x,y,z,v_1,v_2), \ \ \ (i=1,2).  \notag
\end{eqnarray}
The proof of the maximum principle and verification theorem in this case is
practically similar to Section 5. Thus we present these theorems without
proof.

\begin{theorem}
{\upshape(Stochastic Maximum Principle for SDGs of coupled FBSDEs)} Let (A6)
hold. If $(u_{1}(\cdot ),u_{2}(\cdot ))$ is a Nash equilibrium point of
Problem (CFBNM) and $(X(\cdot ),Y(\cdot ),Z(\cdot ))$ denotes the
corresponding trajectory, then for any $(v_{1},v_{2})\in U_{1}\times U_{2},$
the following maximum principle
\begin{eqnarray*}
&&\mathbb{E}\int_{0}^{T}\mathbb{E}^{\prime }\Big[H_{1v_{1}}\big(%
t,X_{t}^{\prime },Y_{t}^{\prime },Z_{t}^{\prime
},X_{t},Y_{t},Z_{t},p_{1}(t),q_{1}(t),Q_{1}(t),u_{1}(t),u_{2}(t)\big)%
(v_{1}(t)-u_{1}(t))\Big]dt\geq 0, \\
&&\mathbb{E}\int_{0}^{T}\mathbb{E}^{\prime }\Big[H_{2v_{2}}\big(%
t,X_{t}^{\prime },Y_{t}^{\prime },Z_{t}^{\prime
},X_{t},Y_{t},Z_{t},p_{2}(t),q_{2}(t),Q_{2}(t),u_{1}(t),u_{2}(t)\big)%
(v_{2}(t)-u_{2}(t))\Big]dt\geq 0,
\end{eqnarray*}%
hold a.s. a.e., where $(p_{i}(\cdot ),q_{i}(\cdot ),Q_{i}(\cdot ))\ (i=1,2)$
is the the solution of the adjoint equation (\ref{adjoint equation for
coupled game}) and the Hamiltonian function $H_{i}\ (i=1,2)$ is defined by (%
\ref{Hamiltonian for coupled game}).
\end{theorem}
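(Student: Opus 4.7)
The proof follows the same blueprint as in Section 5, adapting the convex perturbation technique to the two-player setting. Fix the equilibrium control $u_2(\cdot)$ and, for arbitrary $v_1(\cdot) \in \mathcal{U}_1$, introduce the convex perturbation $u_1^\theta(t) = u_1(t) + \theta(v_1(t) - u_1(t))$ with $\theta \in [0,1]$; since $\mathcal{U}_1$ is convex, $u_1^\theta \in \mathcal{U}_1$. By assumption (A6) the fully coupled mean-field FBSDE (\ref{state equation for coupled game}) driven by $(u_1^\theta, u_2)$ still satisfies (H4)--(H5), so Theorem 7 yields a unique adapted state trajectory $(X^\theta, Y^\theta, Z^\theta)$. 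The Nash property then gives $\theta^{-1}[J_1(u_1^\theta, u_2) - J_1(u_1, u_2)] \ge 0$, and a symmetric setup with $u_1$ fixed and $u_2^\theta = u_2 + \theta(v_2 - u_2)$ yields the analogous inequality for Player~2.

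The key preparatory step is to introduce the variational equation at $(u_1, u_2)$: a linear fully coupled mean-field FBSDE in the triple $(k_1, m_1, n_1)$ whose coefficients are the partial derivatives of $b, \sigma, f$ along $(X, Y, Z)$ and $(X', Y', Z')$ in the corresponding directions, with the inhomogeneous driver $\bar{b}_{v_1}(t)(v_1(t) - u_1(t))$ and its $\sigma, f$ analogues, and the terminal condition $m_1(T) = k_1(T) \Phi_x(X_T)$; the inherited monotonicity and Lipschitz structure ensure existence and uniqueness of $(k_1, m_1, n_1)$ via Theorem 7. The essential technical step, and the main obstacle, is to establish the convergence
\begin{equation*}
\lim_{\theta \to 0} \Bigl\{ \sup_{0 \le t \le T} \mathbb{E}|\tilde X^\theta_t|^2 + \sup_{0 \le t \le T} \mathbb{E}|\tilde Y^\theta_t|^2 + \mathbb{E}\int_0^T |\tilde Z^\theta_t|^2\, dt \Bigr\} = 0
\end{equation*}
for $\tilde X^\theta_t = \theta^{-1}(X^\theta_t - X_t) - k_1(t)$ and its $Y, Z$ analogues. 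Because the state system is fully coupled, $(\tilde X^\theta, \tilde Y^\theta, \tilde Z^\theta)$ itself solves a coupled linear mean-field FBSDE with vanishing forcing terms arising from first-order Taylor remainders of $b, \sigma, f$ in both primed and unprimed arguments; the estimate is obtained by applying It\^{o}'s formula to $\tilde X^\theta_t \tilde Y^\theta_t$ and invoking the monotonicity condition (H5), exactly as in the uniqueness argument of Theorem 7, together with dominated convergence for the remainder terms in the spirit of Lemma 12.

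Given the variational convergence, expanding $J_1(u_1^\theta, u_2) - J_1(u_1, u_2)$ to first order as in Lemma 13 yields a variational inequality involving $(k_1, m_1, n_1)$, the derivatives of $\bar h_1$, and the boundary terms $\mathbb{E}[g_{1x}(X_T) k_1(T)]$ and $\mathbb{E}[\gamma_{1y}(Y(0)) m_1(0)]$. The final move is to apply It\^{o}'s formula to the duality product $p_1(t) k_1(t) + Q_1(t) m_1(t)$ using the adjoint equation (\ref{adjoint equation for coupled game}); existence and uniqueness of $(p_1, q_1, Q_1)$ are guaranteed by Theorem 10, since the adjoint system inherits (H4) and (H6) from (A6). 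Crucially, the extra cross terms produced by the derivatives of $b, \sigma$ in $(\tilde y, y, \tilde z, z)$ (which distinguish the fully coupled case from Section 6) cancel precisely against the additional terms in the $dQ_1$ equation of (\ref{adjoint equation for coupled game}), leaving, after using the boundary conditions on $p_1, Q_1$, exactly the desired integral of $H_{1 v_1}(v_1 - u_1)$. The inequality for $v_2$ is obtained by the symmetric argument with the adjoint triple $(p_2, q_2, Q_2)$, which completes the proof.
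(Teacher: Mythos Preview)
Your proposal is correct and follows precisely the route the paper indicates: the paper omits the proof entirely, stating only that it is ``practically similar to Section 5,'' and you have faithfully filled in that blueprint---convex perturbation with one player's control fixed, variational FBSDE, first-order convergence of the perturbed trajectories, variational inequality, and duality via It\^{o}'s formula applied to $p_i k_i + Q_i m_i$ against the adjoint system (\ref{adjoint equation for coupled game}). Your handling of the convergence step by applying It\^{o} to $\tilde X^\theta_t \tilde Y^\theta_t$ and exploiting the monotonicity (H5) is the natural fully-coupled analogue of Lemma~12 (where the decoupled structure allowed forward and backward estimates to be done separately), and is exactly what the reference to Section~5 is meant to convey.
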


\begin{theorem}
{\upshape (Sufficient Conditions for the Problem (CFBNM))} Assume that the
condition (A6) is satisfied. Let $(u_1(\cdot),u_2(\cdot))\in \mathcal{U}%
_1\times \mathcal{U}_2$ and $(X_t,Y_t,Z_t)$ be the corresponding state
trajectory. Suppose $(p_i(\cdot),q_i(\cdot),Q_i(\cdot)) \ (i=1,2)$ is the
solution of linear mean-field FBSDE (\ref{adjoint equation for coupled game}%
). Moreover, we assume functions $\Phi, g_i \ (i=1,2)$ are convex in $x$, $%
\gamma_i \ (i=1,2)$ is convex in $y$ and function $H_i(t,\tilde{x},\tilde{y},%
\tilde{z},x,y,z,p_i,q_i,Q_i,v_1,v_2) \ (i=1,2)$ is convex with respect to $(%
\tilde{x},\tilde{y},\tilde{z},x,y,z,v_1,v_2)$. Then, if
\begin{eqnarray*}
\lefteqn{\mathbb{E}%
'[H_1(t,X'_t,Y'_t,Z'_t,X_t,Y_t,Z_t,p_1(t),q_1(t),Q_1(t),u_1(t),u_2(t))]} \\
&&=\min\limits_{v_1 \in U_1}\mathbb{E}^{\prime }[H_1(t,X^{\prime
}_t,Y^{\prime }_t,Z^{\prime }_t,X_t,Y_t,Z_t,p_1(t),q_1(t),Q_1(t),v_1,u_2(t))]
\\
\lefteqn{\mathbb{E}%
'[H_2(t,X'_t,Y'_t,Z'_t,X_t,Y_t,Z_t,p_2(t),q_2(t),Q_2(t),u_1(t),u_2(t))]} \\
&&=\min\limits_{v_2 \in U_2}\mathbb{E}^{\prime }[H_2(t,X^{\prime
}_t,Y^{\prime }_t,Z^{\prime }_t,X_t,Y_t,Z_t,p_2(t),q_2(t),Q_2(t),u_1(t),v_2)]
\end{eqnarray*}
hold for all $t\in [0,T]$, $(u_1(\cdot),u_2(\cdot))$ is an equilibrium point
of Problem (CFBNM).
\end{theorem}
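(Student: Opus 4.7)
The plan is to establish the two Nash inequalities
$J_1(v_1(\cdot),u_2(\cdot))\ge J_1(u_1(\cdot),u_2(\cdot))$ and
$J_2(u_1(\cdot),v_2(\cdot))\ge J_2(u_1(\cdot),u_2(\cdot))$ separately, by mimicking the argument used for Theorem 17 (the sufficient condition in the fully coupled control setting), but where now the auxiliary ``competitor'' control only perturbs one coordinate of the equilibrium. Since the two inequalities are proved symmetrically, I will concentrate on the first one and indicate only the symmetry at the end.

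Fix $v_1(\cdot)\in\mathcal U_1$ and let $(X^{v_1,u_2},Y^{v_1,u_2},Z^{v_1,u_2})$ denote the trajectory produced by the pair $(v_1,u_2)$ via the fully coupled mean-field FBSDE (\ref{state equation for coupled game}); its existence and uniqueness is guaranteed by Theorem 7 under (A6). Write the difference $J_1(v_1,u_2)-J_1(u_1,u_2)=\mathrm{I}+\mathrm{II}+\mathrm{III}$, separating the running cost, the $g_1$ term and the $\gamma_1$ term exactly as in the proof of Theorem 17. Convexity of $g_1$, $\gamma_1$ and $\Phi$ immediately gives
\[
\mathrm{II}+\mathrm{III}\ge \mathbb E\bigl[g_{1x}(X_T)(X^{v_1,u_2}_T-X_T)+\gamma_{1y}(Y(0))(Y^{v_1,u_2}(0)-Y(0))\bigr],
\]
and $Y^{v_1,u_2}_T-Y_T\ge \Phi_x(X_T)(X^{v_1,u_2}_T-X_T)$. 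These will be matched against boundary values $p_1(T)=g_{1x}(X_T)-\Phi_x(X_T)Q_1(T)$ and $Q_1(0)=-\gamma_{1y}(Y(0))$ from the adjoint system (\ref{adjoint equation for coupled game}).

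The core step is to apply It\^o's formula to $p_1(t)(X^{v_1,u_2}_t-X_t)+Q_1(t)(Y^{v_1,u_2}_t-Y_t)$ and take expectations. Compared with Section 6, the forward equation now carries derivatives of $b,\sigma$ in the $(y,z)$ variables, which is precisely why the adjoint equation (\ref{adjoint equation for coupled game}) contains the extra terms $\bar b_{\tilde y},\bar b_y,\bar b_{\tilde z},\bar b_z,\bar\sigma_{\tilde y},\bar\sigma_y,\bar\sigma_{\tilde z},\bar\sigma_z$. Using Fubini and the identity $\mathbb E'[\xi']=\mathbb E[\xi]$, every mean-field (primed) term produced by expanding $\bar b,\bar\sigma,\bar f$ around $(X,Y,Z,u_1,u_2)$ pairs up with the corresponding $(\cdot)_{\tilde x},(\cdot)_{\tilde y},(\cdot)_{\tilde z}$ term in the adjoint equation, so the resulting expression collapses to
\[
\mathrm{II}+\mathrm{III}+\mathbb E\!\int_0^T\!\mathbb E'\!\bigl[H_1(t,\Lambda^{v_1,u_2}_t,p_1,q_1,Q_1,v_1,u_2)-H_1(t,\Lambda_t,p_1,q_1,Q_1,u_1,u_2)-\langle\nabla_{\Theta}H_1,\hat\Theta_t\rangle\bigr]dt-\mathrm{I}\ge 0,
\]
where $\Theta=(\tilde x,\tilde y,\tilde z,x,y,z)$ and $\hat\Theta$ is the difference of states. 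I expect the bookkeeping of the mean-field crossterms to be the main obstacle, because one must be careful that each primed derivative $\bar\psi_{\tilde\cdot}$ in the adjoint drift cancels the corresponding contribution coming from $\mathbb E'[\psi(\Lambda^{v_1,u_2}_t)-\psi(\Lambda_t)]$; this is exactly the identity verified in the proof of Theorem 14 and its reuse here is essentially mechanical.

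Finally, convexity of $H_1$ in $(\tilde x,\tilde y,\tilde z,x,y,z,v_1)$ and the Clarke subgradient inequality produce
\[
H_1(t,\Lambda^{v_1,u_2}_t,p_1,q_1,Q_1,v_1,u_2)-H_1(t,\Lambda_t,p_1,q_1,Q_1,u_1,u_2)\ge \langle\nabla_{\Theta}H_1,\hat\Theta_t\rangle+H_{1v_1}(t,\Lambda_t,p_1,q_1,Q_1,u_1,u_2)(v_1(t)-u_1(t)),
\]
so that after cancelation we are left with $\mathbb E\!\int_0^T\!\mathbb E'[H_{1v_1}(\cdots)(v_1(t)-u_1(t))]dt$, which is non-negative by the hypothesis that $v_1\mapsto \mathbb E'[H_1(t,\Lambda_t,p_1,q_1,Q_1,v_1,u_2(t))]$ is minimized at $v_1=u_1(t)$ (this is the convex first-order condition on the admissible set $U_1$). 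Hence $J_1(v_1,u_2)\ge J_1(u_1,u_2)$. Replacing $(H_1,p_1,q_1,Q_1,v_1,u_1,\mathcal U_1)$ by $(H_2,p_2,q_2,Q_2,v_2,u_2,\mathcal U_2)$ and freezing $u_1$ instead of $u_2$ yields the second Nash inequality, completing the proof.
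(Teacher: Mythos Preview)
Your proposal is correct and follows exactly the route the paper indicates: the paper omits the proof, stating only that it is ``practically similar to Section 5'' (i.e., to Theorem 18, which in turn refers back to the detailed proof of Theorem 16). Your decomposition $\mathrm{I}+\mathrm{II}+\mathrm{III}$, the application of It\^o's formula to $p_1(t)(X^{v_1,u_2}_t-X_t)+Q_1(t)(Y^{v_1,u_2}_t-Y_t)$, the handling of the extra $\bar b_{\tilde y},\bar b_y,\bar\sigma_{\tilde y},\ldots$ terms coming from the fully coupled adjoint, and the final convexity argument are precisely the adaptation of Theorem 16's proof to the two-player fully coupled setting that the paper has in mind.
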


\section{Applications: Linear-Quadratic Case}

In this section, we give two LQ examples to illustrate our theoretical
results.

\begin{example}
For notational simplicity, we consider the following one-dimensional
stochastic control problem. Our aim is to search for the admissible control $%
u(\cdot)$ minimizing
\begin{equation}
J(v(\cdot))=\frac{1}{2}\mathbb{E}\Big[\int_0^Tv^2(t)dt+X_T^2+Y_0^2\Big],
\label{example 1 cost functional}
\end{equation}
subject to the following FBSDE:
\begin{eqnarray}
&&\ \ dX_t=\Big[\tilde{A}(t)\mathbb{E}[X_t]+A(t)X_t+B(t)v(t)\Big]dt+\Big[%
\tilde{C}(t)\mathbb{E}[X_t]+C(t)X_t+D(t)v(t)\Big]dW_t,  \notag \\
&&-dY_t=\Big[\tilde{a}(t)\mathbb{E}[X_t]+a(t)X_t+\tilde{b}(t)\mathbb{E}%
[Y_t]+b(t)Y_t+\tilde{\beta}(t)\mathbb{E}[Z_t]+\beta(t)Z_t+E(t)v(t)\Big]%
dt-Z_tdW_t,  \notag \\
&&X_0=a,\ \ Y_T=X_T,\ \ \ \ \ t\in [0,T],  \label{LQ example 1 state}
\end{eqnarray}
where $\tilde{A}(\cdot),\ A(\cdot),\ B(\cdot),\ \tilde{C}(\cdot), \
C(\cdot), D(\cdot),\ \tilde{a}(\cdot),\ a(\cdot),\ \tilde{b}(\cdot),\
b(\cdot), \tilde{\beta}(\cdot), \beta(\cdot)$ and $E(\cdot)$ are bounded and
deterministic, and $v(t), 0\leq t\leq T$ takes value in $\mathbb{R}$.

In this process, the Hamiltonian function is in the form of
\begin{eqnarray}
H(t,\tilde{x},\tilde{y},\tilde{z},x,y,z,p,q,Q,v)&= & p\Big[\tilde{A}(t)%
\tilde{x}+A(t)x+B(t)v\Big]+q\Big[\tilde{C}(t)\tilde{x}+C(t)x_t+D(t)v\Big]
\notag \\
&&-Q\Big[\tilde{a}(t)\tilde{x}+a(t)x+\tilde{b}(t)\tilde{y}+b(t)y+\tilde{\beta%
}(t)\tilde{z}+\beta(t)z+E(t)v\Big]  \notag \\
&&+\frac{1}{2}v^2,  \label{Hamiltonian example 1}
\end{eqnarray}
where $(p(\cdot),q(\cdot),Q(\cdot))$ satisfies
\begin{eqnarray}
\left\{
\begin{array}{ll}
dQ_t=\Big(\tilde{b}(t)\mathbb{E}[Q_t]+b(t)Q_t\Big)dt+\Big(\tilde{\beta}(t)%
\mathbb{E}[Q_t]+\beta(t)Q_t\Big)dW_t \nonumber &  \\
-dp_t=\Big(\tilde{A}(t)\mathbb{E}[p_t]+A(t)p_t+\tilde{C}(t)\mathbb{E}%
[q_t]+C(t)q_t-\tilde{a}(t)\mathbb{E}[Q_t]-a(t)Q_t\Big)dt-q_tdW_t, \nonumber
&  \\
Q_0=-Y_0,\ \ P_T=X_T-Q_T. &
\end{array}
\right.
\end{eqnarray}
If $u(\cdot)$ is optimal, then it follows from Theorem 5.1 and (\ref%
{Hamiltonian example 1}) that
\begin{equation}
u(t)=Q_tE(t)-p_tB(t)-q_tD(t), \ \ t\in [0,T].  \label{example 1
control}
\end{equation}
Moreover, it is easy to check that candidate optimal control (\ref{example 1
control}) is really the optimal control since the coefficients of Eq (\ref%
{LQ example 1 state}) and cost functional (\ref{example 1 cost functional})
satisfy the assumptions of Theorem 4.2.
\end{example}

\begin{example}
Let us consider the following forward-backward stochastic control system:

\begin{eqnarray}
&&\ \ dX_{t}=\Big[\tilde{b}(t)\mathbb{E}[X_{t}]+b(t)X_{t}+\tilde{A}(t)%
\mathbb{E}[Y_{t}]+A(t)Y_{t}+\tilde{B}(t)\mathbb{E}[Z_{t}]+B(t)Z_{t}+D(t)v(t)%
\Big]dt  \notag \\
&&\ \ \ \ \ \ \ \ \ \ \ +\Big[\tilde{\beta}(t)\mathbb{E}[X_{t}]+\beta
(t)X_{t}-\tilde{B}(t)\mathbb{E}[Y_{t}]-B(t)Y_{t}+\tilde{C}(t)\mathbb{E}%
[Z_{t}]+C(t)Z_{t}+E(t)v(t)\Big]dW_{t},  \notag \\
&&-dY_{t}=\Big[\tilde{a}(t)\mathbb{E}[X_{t}]+a(t)X_{t}+\tilde{b}(t)\mathbb{E}%
[Y_{t}]+b(t)Y_{t}+\tilde{\beta}(t)\mathbb{E}[Z_{t}]+\beta (t)Z_{t}+G(t)v(t)%
\Big]dt-Z_{t}dW_{t},  \notag \\
&&X_{0}=a,\ \ Y_{T}=RX_{T},\ \ \ \ \ t\in \lbrack 0,T],
\label{LQ fully coupled
state}
\end{eqnarray}%
where $R>0$ is a constant and $v\in L_{\mathbb{F}}^{2}(0,T;U)$. For
simplicity we also suppose that $U=\mathbb{R}$. Functions $\tilde{a}(\cdot
)>0,\ a(\cdot )>0,\tilde{A}(\cdot )<0,\ \ \tilde{C}(\cdot )<0,\ A(\cdot
)<0,C(\cdot )<0,\ \tilde{B}(\cdot ),\ \tilde{b}(\cdot ),\ \tilde{\beta}%
(\cdot ),\ B(\cdot ),\ D(\cdot ),\ E(\cdot )$, $G(\cdot ),\ b(\cdot )$ and $%
\beta (\cdot )$ are bounded and deterministic. For any given $v(\cdot )$, it
is easy to show that condition (H4) and monotonic condition (H5) hold. Then
from Theorem 7, the fully coupled Mean-field FBSDEs (\ref{LQ fully coupled
state}) has a unique solution $(X(\cdot ),Y(\cdot ),Z(\cdot ))$.

The cost functional is
\begin{equation}
J(v(\cdot ))=\frac{1}{2}\mathbb{E}\int_{0}^{T}\Big[L(t)v^{2}(t)\Big]dt+%
\mathbb{E}[MX_{T}^{2}+NY_{0}^{2}],
\end{equation}%
where constants $M>0,\ N>0$. Function $L(\cdot )$ is deterministic and
bounded, and $L^{-1}$ is also bounded. By (\ref{Hamiltonian for coupled
control}), the Hamiltonian function is given by
\begin{eqnarray*}
H(t,\tilde{x},\tilde{y},\tilde{z},x,y,z,p,q,Q,v) &=&p\Big[\tilde{b}(t)\tilde{%
x}+b(t)x+\tilde{A}(t)\tilde{y}+A(t)y+\tilde{B}(t)\tilde{z}+B(t)z+D(t)v\Big]
\\
&&+q\Big[\tilde{\beta}(t)\tilde{x}+\beta (t)x-\tilde{B}(t)\tilde{y}-B(t)y+%
\tilde{C}(t)\tilde{y}+C(t)z+E(t)v\Big] \\
&&-Q\Big[\tilde{a}(t)\tilde{x}+a(t)x+\tilde{b}(t)\tilde{y}+b(t)y+\tilde{\beta%
}(t)\tilde{z}+\beta (t)z+G(t)v\Big] \\
&&+\frac{1}{2}L(t)v^{2}.
\end{eqnarray*}%
According to Theorem 17, if $u(\cdot )$ is optimal, then
\begin{equation}
u(t)=-L^{-1}(t)\big(p_{t}D(t)+q_{t}E(t)-Q_{t}G(t)\big),\ \ \ \ \ 0\leq t\leq
T,  \label{example 2 control}
\end{equation}%
where $(p(\cdot ),q(\cdot ),Q(\cdot ))$ is the solution of the following
fully coupled Mean-field FBSDEs
\begin{equation*}
\left\{
\begin{array}{ll}
dQ_{t}=\Big(\tilde{b}(t)\mathbb{E}[Q_{t}]+b(t)Q_{t}-\tilde{A}(t)\mathbb{E}%
[p_{t}]-A(t)p_{t}+\tilde{B}(t)\mathbb{E}[q_{t}]+B(t)q_{t}\Big)dt &  \\
\ \ \ \ \ \ \ \ \ \ +\Big(\tilde{\beta}(t)\mathbb{E}[Q_{t}]+\beta (t)Q_{t}-%
\tilde{B}(t)\mathbb{E}[p_{t}]-B(t)p_{t}-\tilde{C}(t)\mathbb{E}%
[q_{t}]-C(t)q_{t}\Big)dW_{t} &  \\
-dp_{t}=\Big(\tilde{b}(t)\mathbb{E}[p_{t}]+b(t)p_{t}+\tilde{\beta}(t)\mathbb{%
E}[q_{t}]+\beta (t)q_{t}-\tilde{a}(t)\mathbb{E}[Q_{t}]-a(t)Q_{t}\Big)%
dt-q_{t}dW_{t},\label{example 2 adjoint equation} &  \\
Q_{0}=-2NY_{0},\ \ p_{T}=2M_{T}X_{T}-RQ_{T},\ \ \ \ \ t\in \lbrack 0,T]. &
\end{array}%
\right.
\end{equation*}%
Similarly, it is easy to verify that the monotonic condition (H6) holds,
then from Theorem 10, FBSDEs (\ref{example 2 adjoint equation}) admits a
unique solution $(Q(\cdot ),p(\cdot ),q(\cdot ))$.

Moreover, since $g(x)=M_Tx^2,\ \gamma(y)=Ny^2,\ \Phi(x)=Rx$ are convex and $%
H(t,\tilde{x},\tilde{y},\tilde{z},x,y,z,p,q,Q,v)$ is convex in $(\tilde{x},%
\tilde{y},\tilde{z},x,y,z,v)$, we can know that the admissible control (\ref%
{example 2 control}) which satisfying the necessary condition of optimality
is really an optimal control.
\end{example}

\end{document}